\theoremstyle{plain}
\newtheorem{theorem}{Theorem}
\newtheorem{lemma}[theorem]{Lemma}
\newtheorem{proposition}[theorem]{Proposition}
\newtheorem{remark}[theorem]{Remark}
\newtheorem{example}[theorem]{Example}
\newtheorem{definition}[theorem]{Definition}
\newtheorem{corollary}[theorem]{Corollary}
\begin{document}
 

\noindent{\Large
$\delta$-Novikov  and $\delta$-Novikov--Poisson algebras}
 \footnote{
We are deeply grateful to Farukh Mashurov for his careful editing of the initial draft and for numerous valuable discussions that significantly contributed to the development of this work.
 The work is supported by 
FCT 2023.08031.CEECIND,  UIDB/00212/2020 and UIDP/00212/2020.}

 \bigskip

\begin{center}

 {\bf
   Ivan Kaygorodov\footnote{CMA-UBI, University of  Beira Interior, Covilh\~{a}, Portugal; \     kaygorodov.ivan@gmail.com}  }

\end{center}

\ 

\noindent {\bf Abstract:}
{\it  
This article considers the structure and properties of $\delta$-Novikov algebras, a generalization of Novikov algebras characterized by a scalar parameter $\delta$.  It looks like $\delta$-Novikov algebras have a richer structure than Novikov algebras. 
So, unlike Novikov algebras, they have non-commutative simple finite-dimensional algebras for $\delta=-1.$ 
Additionally, we introduce $\delta$-Novikov--Poisson algebras, extending several theorems from the classical Novikov--Poisson algebras. Specifically, we consider the commutator structure $[a, b] = a \circ b - b \circ a$ of $\delta$-Novikov algebras, proving that when $\delta \neq 1$, these algebras are metabelian Lie-admissible. Moreover, we prove that every metabelian Lie algebra can be embedded into a suitable $\delta$-Novikov algebra with respect to the commutator product. We further consider the construction of $\delta$-Poisson and transposed $\delta$-Poisson algebras through $\delta$-derivations on the commutative associative algebras. Finally, we analyze the operad associated with the variety of $\delta$-Novikov algebras, proving that it is not Koszul for any value of $\delta$. This result extends known results for the Novikov operad $(\delta=1)$ and the bicommutative operad $(\delta=0)$.
}

 \bigskip 

\noindent {\bf Keywords}:
{\it 
$\delta$-Novikov algebra,
$\delta$-Novikov--Poisson algebra,
operad,  identities.}

\bigskip 

 \
 
\noindent {\bf MSC2020}:  
17A30 (primary);
17B63,
18M60  (secondary).

	 \bigskip

\ 

\


\tableofcontents 
\newpage
\section*{Introduction}

The variety of Novikov algebras is defined by the following identities:
\[
\begin{array}{rcl}
(xy)z-x(yz) &=& (yx)z-y(xz), \\
(xy)z &=& (xz)y.
\end{array} \]
It contains commutative associative algebras as a subvariety.
On the other hand, the variety of Novikov algebras is the intersection of
the variety of right commutative algebras (defined by the second Novikov identity)
and
the variety of left symmetric (pre-Lie) algebras.
Also, a Novikov algebra with the commutator multiplication gives a Lie algebra,
and Novikov algebras are related to Novikov--Poisson algebras  \cite{xu}.
The class of Novikov (known as  Gelfand--Dorfman--Novikov) algebras 
appeared in papers of  Gelfand ---  Dorfman (1979, \cite{gd79}) and Novikov --- Balinsky (1985, \cite{bn85}).

\medskip

The systematic study of Novikov algebras from the algebraic point of view started after a paper by Zelmanov   where
all complex  finite-dimensional simple Novikov algebras were classified  (1987, \cite{ze87}).
The first nontrivial examples of  infinite-dimensional simple  Novikov algebras were constructed by Filippov in (1989, \cite{fi89}).
Also, simple Novikov algebras (under some special conditions) were described in the infinite-dimensional case and over fields of positive characteristic 
in some papers by Osborn and Xu \cite{O94,Xu96}.
Many other purely algebraic properties of Novikov algebras were studied in a series by papers of Dzhumadildaev (see, for example, \cite{di14, DzhNov, dl02} and references therein).  Dzhumadildaev and   Löfwall described the basis of free Novikov algebras \cite{dl02};
Sartayev and co-authors described a basis of free metabelian Novikov 
algebras  \cite{B}.
Dzhumadildaev proved  that the Novikov operad is not Koszul \cite{DzhNov};
Dzhumadildaev and  Ismailov found the $S_n$-module structure of the multilinear component of degree $n$ of the $n$-generated free Novikov algebra over a field of characteristic 0 \cite{di14}.
 Dotsenko, Ismailov, and   Umirbaev studied the polynomial identities satisfied by Novikov algebras in \cite{DIU}.  
 Shestakov and Zhang proved analogs of Itô's and Kegel's theorems for Novikov algebras \cite{Sh-Z20}. Burde and Dekimpe studied Novikov structures on Lie algebras in \cite{BDV}.
Classification of small dimensional Novikov algebras was done in  various papers 
by Abdurasulov, Bai,        Beneš,   Burde,   de Graaf,     Karimjanov, Kaygorodov, Khudoyberdiyev, and Meng  (see, for example, \cite{cl2,cl3} and references therein).

\medskip 

The main example of Novikov algebras is obtained by considering commutative associative algebras with a nonzero derivation $D$ under a new multiplication $x\cdot y= x D(y).$ 
As it was proved, each Novikov algebra can be embedded in a suitable commutative associative algebra with a derivation considered with this new multiplication \cite{Bokut}.
Recently, a generalization of the present construction to non-commutative associative algebras with a nonzero derivation was given in a paper by Kolesnikov and Sartayev \cite{SK}. The present paper is dedicated to the study of another generalization of the above-mentioned construction. 
Namely, we are changing the derivation $D$ to a $\delta$-derivation $\varphi$.
The notion of $\delta$-derivations was introduced by Filippov in (1998, \cite{fil1}). 
It generalizes the notion of derivations, antiderivations, and centroid elements.
The obtained class of algebras, we called as $\delta$-Novikov algebras.
 The variety of $\delta$-Novikov algebras is defined by two identities:
 right commutativity and left symmetric of $\delta$-associator. 
The study of algebras of $\delta$-associator has a big story.
So, 
anti-associator plays the principal role in the definition of
anti-associative \cite{R22}, 
Jacobi–Jordan-admissible  \cite{bbmm}, and anti-pre-Lie \cite{LB,chen,LB24} algebras.
A version of $\delta$-associative and $\delta$-alternative algebras, 
where $\delta$ is a trilinear map with values in the basic field, was considered in \cite{C72,C73,LR78}.

 \medskip

The paper is organized as follows.
Section \ref{dnov} is dedicated to $\delta$-Novikov and $\delta$-pre-Lie algebras. 
Example \ref{exnov} provides a construction of $\delta$-Novikov algebra  ${\mathcal A}_\varphi$ from a commutative associative algebra ${\mathcal A}$ with a nonzero $\delta$-derivation $\varphi.$
We have shown that $\delta$-Novikov algebras share some common properties of Novikov algebras.
So, the product of two ideals and Lie product of two ideals are also ideals (Lemma \ref{bdnov}); the variety of $\delta$-Novikov algebras does not satisfy the
Nielsen–Schreier property (Corollary \ref{NS}); 
each unital $\delta$-Novikov algebra is commutative  and associative (Proposition \ref{unitl}). 
Theorem \ref{cldn} gives the algebraic classification of $2$-dimensional $\delta$-Novikov algebras.
From the present result, we have a crucial difference between Novikov and anti-Novikov algebras. Namely, unlike Novikov algebras, anti-Novikov algebras have complex non-commutative simple finite-dimensional algebras.
Next, following the ideas from \cite{LB}, we introduce the notion of $\delta$-pre-Lie algebras.
They are Lie-admissible and generalize $\delta$-Novikov algebras.
   Theorem \ref{cldpl} gives the algebraic classification of $2$-dimensional $\delta$-pre-Lie algebras. 
   
   Section \ref{dnp} is dedicated to the study of $\delta$-Novikov--Poisson algebras.
   We give some constrictions for obtaining examples of $\delta$-Novikov algebras 
   (Lemma \ref{lem: Np w/new nov mult}, Theorem \ref{dnpex} and Corollary \ref{dnp_c}) and proved that the Kantor product of two multiplications of a $\delta$-Novikov--Poisson algebra gives a $\delta$-Novikov algebra (Theorem \ref{KP}).  
   It was shown that the tensor product of two $\delta$-Novikov--Poisson algebras admits a structure of a new $\delta$-Novikov--Poisson algebra under the standard multiplications (Theorem \ref{tens}).
  
   Section \ref{prod} is dedicated to the study of identities of $\delta$-Novikov algebras under the commutator multiplication. 
       It was shown that every identity satisfied by the commutator product in every $\delta$-Novikov ($\delta\neq 1$) algebra is a consequence of anti-commutativity, the Jacobi, and the metabelian identities (Theorem \ref{th: free met Lie}).
 
  Section \ref{dpoisson} gives relations between $\delta$-derivations, (transposed) $\delta$-Poisson algebras,  and $\delta$-Novikov--Poisson  algebras.
Methods for construction of (transposed) $\delta$-Poisson algebras from a commutative associative algebra with a nonzero $\delta$-derivation are presented (Theorem \ref{dpdd} and 
Corollary \ref{tdpdd}).
We proved that a $\delta$-Novikov--Poisson algebra under the commutator product gives a transposed $(\delta+1)$-Poisson algebra (Theorem \ref{th: d Nov d trPoison}). We introduce the notion of $\delta$-Gelfand--Dorfman algebras
and proved that commutative $\delta$-Gelfand--Dorfman algebras give transposed $(\delta+1)$-Poisson algebras (Theorem \ref{GD}).

   Section \ref{oper} is dedicated to the study of operads of  $\delta$-Novikov algebras:
the Koszul dual operad of left $\delta$-Novikov algebras is the operad of  right $\delta$-Novikov algebras and it is   not Koszul (Theorem \ref{thoper}).

The last Section is about a translation of known results about nilpotency and solvability of Novikov algebras to the $\delta$-Novikov case. In $\delta$-Novikov algebras the following statements are equivalent:
   right nilpotency,  nilpotent of the square, and solvability (Theorem \ref{thens}).

\medskip

  \noindent
{\bf Notations.}
We do not provide some well-known definitions
(such as definitions of 
Lie algebras,
Lie-admissible algebras, 
Poisson algebras,  nilpotent algebras, solvable algebras,  etc.) and refer the readers to consult previously published papers. 
For the commutator and $\delta$-associator,  
we will use the standard notations:

\begin{center}
    $[x,y] : = x\ast y-y\ast x,$  \   
    $(x,y,z)_{\delta}^{\ast}:=\delta(x\ast y) \ast z -x\ast(y\ast z),$  \  and \
    $\circlearrowright_{x,y,z}f(x,y, z)=f(x,y, z)+f(y, z,x)+f( z,x,y).$
    \end{center}

In general, we are working with the complex field,
but some results are correct for other fields.
We also almost always  assume that $\delta  \neq 1$ and 
algebras with two multiplications under our consideration are nontrivial, 
i.e. they have both nonzero multiplications.

\newpage
\section{$\delta$-Novikov and $\delta$-left-symmetric algebras}\label{dnov}
In this section, we introduce the notion of $\delta$-Novikov algebras, which are a generalization of Novikov and bicommutative algebras with respect to the scalar parameter $\delta$.  We provide examples of this new structure with respect to $\delta$-derivations. 
 
\begin{definition}\label{12der}
Let ${\mathcal A}$ be an algebra and $\delta$ be a fixed complex number.  
Then  a linear map $\varphi$ is a $\delta$-derivation if it satisfies
\begin{center}
$\varphi(x y) \ = \ \delta  \big(\varphi(x)y+ x \varphi(y)\big).$
\end{center}
\end{definition}

If $\delta=1$ (resp., $\delta=-1$) we have a derivation (resp., antiderivation\footnote{Let us remember that the notion of antiderivations plays an important role in the definition of mock-Lie algebras\cite{Z17} and (transposed) anti-Poisson algebras \cite{dP}; 
in characterization of Lie algebras with identities \cite{fant} and so on.
In the anticommutative case,  antiderivations coincide with reverse derivations defined by Herstein in 1957 \cite{her}.}).
The notion of $\delta$-derivations was introduced in a paper by Filippov \cite{fil1} (see also references in \cite{k23,aae23,kay12mz,zz}).


\begin{definition}\label{def: delta-novikov} Let $N$ be an algebra and $\delta$ be a fixed complex number. Then    $N$ is called 
a $\delta$-left-symmetric  algebra\footnote{We say that $N$ be a $\delta$-associative algebra if $(x,y,z)_\delta=0.$}   the following identity holds true:
    \begin{equation}\label{id: del-lsym}
       \delta  (xy)z-x(yz) \ = \ \delta (yx)z-y(xz),
    \end{equation}
A $\delta$-left-symmetric  algebra   $N$  is called a left $\delta$-Novikov   algebra if  the following identity holds true:
    \begin{equation}\label{id: rcom}
        (xy)z\ =\ (xz)y.
    \end{equation} 
\end{definition}
Let $\mathcal{V}ar_{\delta}$ be a variety of algebras defined by the identities \eqref{id: del-lsym} and \eqref{id: rcom} for some $\delta$. Then $\mathcal{V}ar_0$ is the variety of bicommutative algebras\footnote{About recent results in bicommutative algebras, see \cite{dimz}.}, and $\mathcal{V}ar_1$ is the variety of Novikov algebras.
Hence, the varieties of $\delta$-Novikov algebras give a type of continuous deformation 
of the variety of Novikov algebras to the variety of bicommutative algebras and vice versa.

\begin{proposition} Let $\mathcal{V}_0$ be a subvariety of bicommutative algebras with an identity $$(xy)z=(yx)z.$$ Then
\[
\bigcap_{\delta} \mathcal{V}ar_{\delta}  = \mathcal{V}_0.
\]    
\end{proposition}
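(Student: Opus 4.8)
The plan is to prove the two inclusions separately, the whole argument resting on the observation that the defining identity \eqref{id: del-lsym} is affine-linear in the parameter $\delta$. Writing it in the form $\delta\bigl((xy)z-(yx)z\bigr)=x(yz)-y(xz)$, one sees that demanding it for every $\delta$ simultaneously is the same as demanding that the coefficient of $\delta$ and the $\delta$-free part each vanish identically.

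First I would show $\bigcap_{\delta}\mathcal{V}ar_{\delta}\subseteq\mathcal{V}_0$. Any $A$ in the intersection lies in particular in $\mathcal{V}ar_0$, the variety of bicommutative algebras; hence $A$ already satisfies left commutativity $x(yz)=y(xz)$ (the $\delta=0$ instance of \eqref{id: del-lsym}) and right commutativity $(xy)z=(xz)y$ (identity \eqref{id: rcom}). Substituting any other value $\delta\neq 0$ into \eqref{id: del-lsym} and subtracting the already-established left-commutativity relation leaves $\delta\bigl((xy)z-(yx)z\bigr)=0$, and dividing by $\delta$ gives the extra identity $(xy)z=(yx)z$. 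Thus $A$ is bicommutative and satisfies $(xy)z=(yx)z$, i.e.\ $A\in\mathcal{V}_0$.

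Conversely, for $\mathcal{V}_0\subseteq\bigcap_{\delta}\mathcal{V}ar_{\delta}$, take $A\in\mathcal{V}_0$, so $A$ obeys right commutativity, left commutativity, and $(xy)z=(yx)z$. Right commutativity is exactly \eqref{id: rcom}, while \eqref{id: del-lsym} for an arbitrary $\delta$ holds because the two $\delta$-terms agree by $(xy)z=(yx)z$ and the two $\delta$-free terms agree by left commutativity; hence $A\in\mathcal{V}ar_{\delta}$ for every $\delta$. Combining the inclusions gives the desired equality.

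There is essentially no deep obstacle here: the only point requiring a word of care is the separation-of-coefficients step, which needs at least two distinct scalars $\delta$, and this is automatic over $\mathbb{C}$. The remaining task is purely bookkeeping --- checking that the three relations produced (right commutativity, left commutativity, and $(xy)z=(yx)z$) are precisely the defining identities of $\mathcal{V}_0$, so that no extra consequence has been silently introduced or omitted.
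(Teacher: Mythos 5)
Your proof is correct and is exactly the ``direct calculations'' that the paper's one-line proof alludes to: the separation of the $\delta$-linear and $\delta$-free parts of identity \eqref{id: del-lsym}, using $\delta=0$ together with one nonzero $\delta$, yields precisely the three identities defining $\mathcal{V}_0$, and the converse inclusion is immediate. No discrepancy with the paper's approach.
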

\begin{proof}
    Follows from direct calculations.
\end{proof}

Note that the variety  $\mathcal{V}_0$ is not nilpotent, as can be seen in the following example.
\begin{example}\label{ex: 2-dim d nov}
    Let ${\mathcal A}$ be a two-dimensional algebra with the multiplication table $e_1e_2=e_2.$ Then ${\mathcal A}$ is a $\delta$-Novikov algebra, moreover ${\mathcal A}\in \mathcal{V}_0.$
\end{example}

The classic example of the construction of Novikov algebras is based on a commutative associative algebra with a nonzero derivation. 
We now present a generalization of this example.
Namely, we explain how  some $\delta$-Novikov algebras can be constructed using $\delta$-derivations.

\begin{example}\label{exnov}
    Let ${\mathcal A}$ is a commutative associative algebra
and $\varphi$ is a $\delta$-derivation on ${\mathcal A}$, then ${\mathcal A}_{\varphi}$ is a ${\delta}$-Novikov algebra under multiplication \begin{equation}\label{mult nov in com}
    x \circ y \ = \ x\varphi(y).
\end{equation}
\end{example}

One can define right $\delta$-Novikov
   algebras if  the following identities hold true:
    \begin{equation*}\label{id: del-rsym}
            (xy)z-\delta x(yz) \ = \ (xz)y-\delta x(zy),
    \end{equation*}
    \begin{equation*}\label{id: lcom}
        x(yz) \ =\ y(xz).
    \end{equation*} 
Examples of right $\delta$-Novikov algebras can be given in the following way.
\begin{example}
    Let ${\mathcal A}$ is a commutative associative algebra
and $\varphi$ is a $\delta$-derivation on ${\mathcal A}$, then ${\mathcal A}$ is a right ${\delta}$-Novikov algebra under multiplication \begin{equation*} 
    x \circ y =\varphi(x)y.
\end{equation*}
\end{example}

 Similar to the case of ordinary right and left Novikov algebras, any right (resp., left) $\delta$-Novikov algebra becomes a left (resp., right) $\delta$-Novikov algebra under the opposite multiplication defined by $(x, y) \mapsto yx$. Unless stated otherwise, when we refer to $\delta$-Novikov algebras, we mean left $\delta$-Novikov algebras.

$\delta$-Novikov algebras share many properties of Novikov algebras. 
Let us summarize some common properties in the following observation.

\begin{lemma}\label{bdnov}
 Let $N$ be a $\delta$-Novikov algebra and assume that $I,J$ are two-sided ideals of $N$.
Then 
\begin{enumerate}
    \item $IJ$   is   a two-sided ideal of $N;$
    \item  $[I,J ]$ is    a two-sided ideal of $N$  if $\delta\neq-1;$ 
\item\label{ident} the following identities hold in $N:$
\begin{equation}\label{[xy]z}
[x, y]  z + [y,z]  x + [z, x]  y \  = \ 0,
\end{equation}
\begin{equation}\label{x[yz]}
x  [y,z] + y  [z, x] + z [x, y] \ = \ 0,
\end{equation}
i.e. $N$ is Lie-admissible.
\end{enumerate}
\end{lemma}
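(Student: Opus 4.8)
The plan is to handle all three claims from the two defining identities \eqref{id: del-lsym} and \eqref{id: rcom} together with the single derived relation
\[
x(yz)-y(xz)=\delta[x,y]z,
\]
obtained by moving the $\delta$-terms of \eqref{id: del-lsym} to one side and factoring $z$ out of $(xy-yx)z$. I would prove the two identities \eqref{[xy]z} and \eqref{x[yz]} first, since they drive the ideal arguments.

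For \eqref{[xy]z} I would use right commutativity \eqref{id: rcom} alone: rewriting each cyclic summand as $[x,y]z=(xz)y-(yz)x$, the six resulting monomials of the form $(ab)c$ cancel in pairs after a further application of \eqref{id: rcom}. For \eqref{x[yz]} I would group the six monomials of $\circlearrowright_{x,y,z}x[y,z]$ into the three differences $x(yz)-y(xz)$, $y(zx)-z(yx)$, $z(xy)-x(zy)$; each equals $\delta$ times a cyclic term by the derived relation, so the total is $\delta\,\circlearrowright_{x,y,z}[x,y]z=0$ by \eqref{[xy]z}. Lie-admissibility is then immediate, since the Jacobiator $\circlearrowright_{x,y,z}[[x,y],z]=\circlearrowright_{x,y,z}[x,y]z-\circlearrowright_{x,y,z}z[x,y]$ vanishes because both cyclic sums are zero by \eqref{[xy]z} and \eqref{x[yz]}.

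For item~1 I would verify the two closure conditions on a generator $ij$ with $i\in I$, $j\in J$. Right multiplication is handled directly by \eqref{id: rcom}: $(ij)n=(in)j\in IJ$ because $in\in I$. Left multiplication follows from the derived relation in the form $n(ij)=i(nj)+\delta[n,i]j$, where $nj\in J$ and $[n,i]\in I$ put both summands in $IJ$; bilinearity extends the conclusion from generators to all of $IJ$.

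Item~2 is the crux, and is where $\delta\neq-1$ must enter. On a generator $[i,j]$ I would compute $[i,j]n=(in)j-(jn)i$ via \eqref{id: rcom} and then rewrite the two products as $(in)j-(jn)i=[in,j]-[jn,i]+\bigl(j(in)-i(jn)\bigr)$. Here $[in,j]\in[I,J]$ and $[jn,i]\in[J,I]=[I,J]$, but the leftover ``defect'' $j(in)-i(jn)$ is not visibly in $[I,J]$; this is the main obstacle. The resolution is that the derived relation identifies the defect with the very quantity under computation: $j(in)-i(jn)=\delta[j,i]n=-\delta[i,j]n$. Substituting yields the self-referential relation $(1+\delta)[i,j]n=[in,j]-[jn,i]$, so dividing by $1+\delta$ (legitimate exactly because $\delta\neq-1$) gives $[i,j]n\in[I,J]$. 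Left multiplication is then free: the Jacobi identity makes $[I,J]$ a Lie ideal, so $[[i,j],n]\in[I,J]$ and hence $n[i,j]=[i,j]n-[[i,j],n]\in[I,J]$. I would close by noting that at $\delta=-1$ the coefficient $1+\delta$ collapses, which is precisely why that value is excluded.
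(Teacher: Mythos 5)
Your proof is correct and takes essentially the same route as the paper's: \eqref{[xy]z} from right commutativity alone, \eqref{x[yz]} from the rewriting $x(yz)-y(xz)=\delta[x,y]z$ combined with \eqref{[xy]z}, closure of $IJ$ from that same rewriting, and for $[I,J]$ the self-referential relation $(1+\delta)[i,j]n=[in,j]-[jn,i]$ followed by the Jacobi identity to handle left multiplication. The only difference is cosmetic: you prove part~3 before parts~1--2 (which makes the paper's implicit use of Jacobi for $a[x,y]=[x,y]a+[a,[x,y]]$ explicit), and your version of the key commutator formula is stated correctly, whereas the paper's displayed $[x,y]a=\tfrac{1}{\delta+1}\big([y,ax]-[x,ya]\big)$ appears to contain a sign/ordering typo (it fails, e.g., in ${\bf N}_{08}^{\delta}$ with $x=e_2$, $y=a=e_1$), though it embodies the same division-by-$(1+\delta)$ idea.
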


\begin{proof}
Let $x\in I,$ $y \in J$ and $a \in N,$ then 
\begin{longtable}{rcl}
$(xy)a \ =\ (xa)y$&$ \in$&$ IJ$\\
$a(xy) \ = \ -\delta (ax)y + \delta (xa)y-x(ay) $&$\in $&$IJ,$
 \end{longtable}
hence $IJ$ is an ideal.

\begin{longtable}{rcl}

$[x,y]a  \ =\  \frac{1}{\delta+1} \big( [y,ax] - [x,ya] \big)$ &$\in$&$[I,J],$\\
$a  [x, y] \ = \ [x, y] a + [a, [x, y]]$ & $\in$&$ [I,J ],$
\end{longtable}
hence $[I,J]$ is an ideal if $\delta\neq -1$.

The first identity from part \ref{ident} follows from the right commutative identity \eqref{id: rcom} and the second identity 
follows from \eqref{id: del-lsym} and \eqref{[xy]z}:
\begin{longtable}{lclcl}
$x  [y,z] + y  [z, x] + z [x, y]$ &$=$&
$x(yz)-x(zy)+ y(zx)-y(xz)+z(xy)-z(yx)$\\
&$=$&$\delta\big((xy)z-(yx)z+(zx)y -(xz)y+ (yz)x-(zy)x \big)$
&$=$&$0$.
\end{longtable}

\end{proof}

\begin{proposition}\label{unitl}
If a $\delta$-Novikov  algebra $N$ has a unity element, then $N$ is commutative and associative.    
\end{proposition}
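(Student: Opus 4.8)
The plan is to extract commutativity first and then bootstrap to associativity, observing that the parameter $\delta$ ultimately plays no role. Write $e$ for the unity of $N$, so that $ex = xe = x$ for every $x \in N$.

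First I would specialize the right commutativity identity \eqref{id: rcom}, namely $(xy)z = (xz)y$, by setting $x = e$. Since $ey = y$ and $ez = z$, this collapses at once to $yz = zy$, so $N$ is commutative. Note that this step uses only \eqref{id: rcom} and imposes no condition on $\delta$.

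Next I would feed commutativity into the $\delta$-left-symmetric identity \eqref{id: del-lsym}, that is $\delta(xy)z - x(yz) = \delta(yx)z - y(xz)$. Because $xy = yx$, the two $\delta$-terms cancel and the identity reduces to $x(yz) = y(xz)$; call this relation (LC). Here the apparent dependence on $\delta$ vanishes, which is the one point worth flagging: one might expect the argument to need $\delta \neq 1$, but in the unital case the $\delta$-terms annihilate each other for every value of $\delta$.

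Finally I would assemble associativity from commutativity and (LC) through the chain
\[
x(yz) = x(zy) = z(xy) = (xy)z,
\]
where the first and last equalities are instances of commutativity (applied to the factor $yz$ and to the product $xy$ against $z$, respectively), and the middle equality is (LC) read with $a=x$, $b=z$, $c=y$, so that $a(bc)=b(ac)$ becomes $x(zy)=z(xy)$. This gives $(xy)z = x(yz)$, so $N$ is associative, completing the argument. There is no genuine obstacle here; the only things to get right are the order of the two reductions and the bookkeeping of commutativity in the final chain.
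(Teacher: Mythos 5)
Your proof is correct, but your route to associativity differs from the paper's. The commutativity step is identical: both you and the paper set the unit in \eqref{id: rcom} to get $yz = zy$. For associativity, however, the paper never touches the $\delta$-left-symmetric identity \eqref{id: del-lsym}: it just reuses right commutativity, writing $x(yz) = (yz)x = (yx)z = (xy)z$, where the outer equalities are commutativity and the middle one is \eqref{id: rcom} applied to the triple $(y,z,x)$. You instead derive the left-commutativity relation $x(yz) = y(xz)$ by cancelling the $\delta$-terms in \eqref{id: del-lsym} against commutativity, and then chain it with commutativity. Both arguments are valid, and your remark that $\delta$ drops out is accurate; but the paper's version buys slightly more, since it shows that \emph{any} unital right-commutative algebra is automatically commutative and associative, with the $\delta$-left-symmetric identity playing no role whatsoever --- so the conclusion holds on a larger variety than $\mathcal{V}ar_\delta$. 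Your version, by contrast, genuinely uses both defining identities, so it does not yield that stronger statement, though it is no less correct as a proof of the proposition as stated.
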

\begin{proof}
    By taking $x=1$ in  \eqref{id: rcom}, we have  commutativity and 
    \eqref{id: rcom} gives associativity.
\end{proof}    

  In \cite{DotGM24}, it was proved that the only non-trivial variety of non-associative algebras that is both a Nielsen–Schreier variety (i.e. every subalgebra of a free algebra is free) and $I^2$ is an ideal whenever $I$ is an ideal is the variety of Lie algebras. By Lemma \ref{bdnov} and the main result in  \cite{DotGM24} we have

\begin{corollary}\label{NS}
  $\mathcal{V}ar_\delta$ does not satisfy the Nielsen–Schreier property.
\end{corollary}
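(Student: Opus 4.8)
The plan is to argue by contradiction, leaning entirely on the dichotomy established in \cite{DotGM24}. First I would record that the variety $\mathcal{V}ar_\delta$ satisfies the ``square of an ideal is an ideal'' condition: this is exactly Lemma \ref{bdnov}(1) specialized to $I=J$, which shows that $I^2$ is a two-sided ideal whenever $I$ is a two-sided ideal of a $\delta$-Novikov algebra. I would also note that $\mathcal{V}ar_\delta$ is a non-trivial variety of non-associative algebras, since Example \ref{ex: 2-dim d nov} exhibits a member with nonzero multiplication that belongs to $\mathcal{V}ar_\delta$ for every $\delta$.

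Next I would suppose, toward a contradiction, that $\mathcal{V}ar_\delta$ also enjoys the Nielsen--Schreier property. Then $\mathcal{V}ar_\delta$ would be a non-trivial variety possessing simultaneously the Nielsen--Schreier property and the ideal-square property, so by the main theorem of \cite{DotGM24} it would have to coincide with the variety of Lie algebras. The task then reduces to ruling this out.

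To close the argument I would produce a single $\delta$-Novikov algebra that is not anticommutative, since every Lie algebra satisfies $xx=0$. The algebra of Example \ref{ex: 2-dim d nov}, with $e_1 e_2 = e_2$ and all other products zero, already suffices: the element $e_1+e_2$ satisfies $(e_1+e_2)(e_1+e_2)=e_2\neq 0$, so this algebra lies in $\mathcal{V}ar_\delta$ but not in the variety of Lie algebras. Hence $\mathcal{V}ar_\delta$ is not the variety of Lie algebras, contradicting the conclusion forced by \cite{DotGM24}, and the Nielsen--Schreier property must fail. I do not expect a genuine obstacle here, as the statement is a direct corollary of the quoted classification; the only point requiring care is verifying that both hypotheses of \cite{DotGM24} --- non-triviality and the ideal-square condition --- are met, which the preceding lemma and example supply uniformly in $\delta$.
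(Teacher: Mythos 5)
Your proof is correct and follows essentially the same route as the paper: the paper's proof consists precisely of citing Lemma \ref{bdnov} (for the ideal-square property) together with the dichotomy of \cite{DotGM24}. You merely make explicit the two details the paper leaves implicit --- non-triviality of $\mathcal{V}ar_\delta$ and the fact that it is not the variety of Lie algebras (via the non-anticommutative algebra of Example \ref{ex: 2-dim d nov}) --- which is a sound and welcome completion of the same argument.
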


 \begin{theorem}\label{delt}
    Let  $\mathcal{A} $ be a commutative associative algebra and let  $\varphi $ be a  $\delta$-derivation    of  $\mathcal{A}$. Define the operation  $\circ $ by (\ref{mult nov in com}).
Then
\begin{enumerate}
    \item  if $\delta\neq 0,\frac{1}{2},1,$ then ${\mathcal A}_{\varphi}$ is $3$-step nilpotent. 
    \item  if $\delta= 0,$ then ${\mathcal A}_{\varphi} $ is $2$-step left nilpotent, but not necessary right nilpotent. 
    \item if $\delta=\frac{1}{2},$ then ${\mathcal A}_{\varphi} $ is not necessary nilpotent, solvable or left nilpotent.
\end{enumerate} 
\end{theorem}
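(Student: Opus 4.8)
The strategy is to push every $\circ$-product down into the underlying commutative associative algebra $\mathcal A$ and then to exploit the rigidity that a $\delta$-derivation imposes for generic $\delta$. Since $x\circ y=x\varphi(y)$ by \eqref{mult nov in com}, expanding a $\circ$-product of $n$ elements and pushing all the $\varphi$'s onto individual letters via Definition \ref{12der} writes it as a linear combination of associative monomials $\varphi^{i_1}(x_1)\cdots\varphi^{i_n}(x_n)$ with $\sum_j i_j=n-1$ (each $\circ$ contributes exactly one $\varphi$, and the rule in Definition \ref{12der} preserves the total number of $\varphi$'s as they are pulled inward). The first thing I would prove is a redistribution identity: comparing the two bracketings $(xy)z$ and $x(yz)$ of $\varphi(xyz)$ gives $\delta(\delta-1)\bigl(\varphi(x)yz-xy\varphi(z)\bigr)=0$, so for $\delta\neq 0,1$ one has $\varphi(x)yz=x\varphi(y)z=xy\varphi(z)$. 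Grouping any two factors of a longer product into one, this lets a single $\varphi$ be moved freely between factors inside any associative product of at least three factors; hence all the $\varphi$'s may be consolidated onto one factor, and such a product of $m\ge 3$ factors of total $\varphi$-weight $w$ equals $x_1\cdots x_{m-1}\varphi^{w}(x_m)$ independently of the original distribution.

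For statement~(1) the decisive step is to evaluate $\varphi(abcd)$ along the two bracketings $(ab)(cd)$ and $a(bcd)$, using only Definition \ref{12der} together with the consolidation above. The first gives $4\delta^2\,abc\,\varphi(d)$ and the second $\delta(1+\delta+2\delta^2)\,abc\,\varphi(d)$; equating them yields $\delta(2\delta-1)(\delta-1)\,abc\,\varphi(d)=0$. Thus for $\delta\neq 0,\tfrac12,1$ I obtain the identity $abc\,\varphi(d)=0$ for all $a,b,c,d\in\mathcal A$. Any product of four elements of $\mathcal A_\varphi$ is a combination of associative monomials in four letters of $\varphi$-weight $3$; in each such monomial at least one letter carries a $\varphi$, so grouping the remaining three letters exhibits it in the form $abc\,\varphi(d)=0$. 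Hence every product of four (and, by the same grouping, of more) elements vanishes, i.e. $\mathcal A_\varphi^{4}=0$, so $\mathcal A_\varphi$ is $3$-step nilpotent; the bound is sharp, as $\mathcal A=t\,\mathbb C[t]/(t^4)$ with $\varphi(t)=t$ gives $(t\circ t)\circ t=t^3\neq 0$.

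The excluded values are exactly where the scalar $\delta(2\delta-1)(\delta-1)$ vanishes, and they are handled directly. For $\delta=0$ the defining identity forces $\varphi(xy)=0$, whence $a\circ(b\circ c)=a\,\varphi\!\bigl(b\varphi(c)\bigr)=0$, which is the required $2$-step left nilpotency; that right nilpotency may fail is shown by $\mathcal A=t\,\mathbb C[t]$ with $\varphi(t)=t$ and $\varphi(t^n)=0$ for $n\ge 2$, a $0$-derivation for which the left-normed $\circ$-powers $t^{\circ n}=(\cdots(t\circ t)\circ\cdots\circ t)$ satisfy $t^{\circ n}=t^n\neq 0$ for all $n$. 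For $\delta=\tfrac12$ the construction stops degenerating: on any unital commutative associative algebra the map $\varphi=\mathrm{id}$ is a $\tfrac12$-derivation, and then $x\circ y=xy$, so taking $\mathcal A=\mathbb C$ (or any non-nilpotent such algebra) gives $\mathcal A_\varphi\cong\mathcal A$, which is simultaneously non-nilpotent, non-solvable and non-left-nilpotent.

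The main obstacle is isolating the factor $(2\delta-1)$. It emerges only after the redistribution identity is established (which is precisely where $\delta\neq 0,1$ enters) and then only from carefully comparing two genuinely different associative bracketings of a degree-four expression; keeping the $\varphi$-weight bookkeeping correct, and checking that all five bracketings of a degree-four $\circ$-product really collapse into the single monomial $abc\,\varphi(d)$, is the delicate part. Once this is done, $\delta=0$ and $\delta=\tfrac12$ appear naturally as the two places where the argument degenerates and are disposed of by the explicit examples above.
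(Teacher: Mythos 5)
Your proposal is correct and follows essentially the same route as the paper: both derive the redistribution identity $\varphi(x)yz=xy\varphi(z)$ from the two bracketings of $\varphi(xyz)$ (using $\delta\neq 0,1$), then compare $\varphi((ab)(cd))=4\delta^2\,abc\,\varphi(d)$ with $\varphi(a(b(cd)))=(\delta+\delta^2+2\delta^3)\,abc\,\varphi(d)$ to extract the factor $\delta(2\delta-1)(\delta-1)$ and conclude that all products of four or more elements vanish, and both dispose of $\delta=0$ via $\varphi(xy)=0$ plus a counterexample to right nilpotency and of $\delta=\tfrac12$ via $\varphi=\mathrm{id}$ on a non-nilpotent algebra. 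Your counterexample for part (2) ($t\,\mathbb{C}[t]$ with $\varphi(t)=t$, $\varphi(t^n)=0$ for $n\ge 2$) differs from the paper's two-dimensional one but is equally valid, and your sharpness example $t\,\mathbb{C}[t]/(t^4)$ is a correct (optional) addition.
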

\begin{proof}
First, let $\delta\neq 0,\frac{1}{2},1.$ It is easy to see that 
\begin{longtable}{lclcl}
    $\delta  \varphi(x)yz+\delta^2 x\varphi(y)z+\delta^2   xy\varphi(z)$ &$=$& $\varphi(xyz)$ &$=$ &$\delta^2 \varphi(x)yz+\delta^2 x\varphi(y)z+\delta   xy\varphi(z),$
\end{longtable}
then $(\delta^2-\delta) \varphi(x)yz\ =\ (\delta^2-\delta) xy\varphi(z),$ i.e. $\varphi(x)yz \ =\  xy\varphi(z).$

On the other hand,
\begin{longtable}{lcl}
$\varphi((xy)(zt))$ &$=$& $ 4\delta^2 \varphi(x)yzt,$\\
$\varphi(x(y(zt)))$ &$=$& $ (\delta+\delta^2+2\delta^3) \varphi(x)yzt.$\\
\end{longtable}
The last gives $\varphi(x)yzt \ =\ 0$ and $\varphi(xyzt) \ =\ 0.$ 
It follows that any product of 4 elements in ${\mathcal A}_{\varphi}$ is zero.

Second, let $\delta= 0.$ Hence, $\varphi(xy)=0$ and $x \circ (y \circ z)=0.$
On the other hand, we consider a $2$-dimensional algebra with the multiplication $e_1e_1=e_1$ and $\varphi$ defined as $\varphi(e_2)=e_1.$
$\varphi$ is a $0$-derivation, but $(\ldots(e_1 \circ e_2)\ldots  )\circ e_2 = e_1\neq0.$

Third, 
the basic example of $\frac{1}{2}$-derivations is the identity map ${\rm Id}$.
Hence, taking $\varphi={\rm Id}$ for a non-nilpotent algebra $\mathcal A,$ we have that  ${\mathcal A}_{\varphi}$ is isomorphic to $\mathcal A$ and it is non-nilpotent, non-solvable and non-left nilpotent.

\end{proof}

The classification of two-dimensional algebras over an algebraically closed field was established in \cite{kv16}. We present a classification of two-dimensional $\delta$-Novikov algebras, which can be achieved either by utilizing the program referenced in \cite{Kadyrov} or by directly requesting it from the authors.

\begin{theorem}\label{cldn}
    Let $N$ be a nontrivial $2$-dimensional $\delta$-Novikov algebra, 
then it is isomorphic to one algebra listed below
(the number $\gamma$ in the third column means that it is a $\gamma$-Novikov algebra).

\begin{longtable}{|l|l|r|l|l|l|l|}

\hline
${\bf N}_{01}$&${\bf A}_1(0)$  &$1$ & $e_1e_1=e_1+e_2$& $e_1e_2=0$& $  e_2e_1=e_2$& $e_2e_2=0$ \\

\hline
${\bf N}_{02}$&${\bf A}_3$ &$\delta$ & $e_1e_1=e_2$ & $ e_1e_2=0$ & $e_2e_1=0$ & $e_2e_2=0$ \\

\hline
${\bf N}_{03}$&${\bf B}_2(0)$ &$0$ & $e_1e_1=0$ & $e_1e_2=e_1$ & $  e_2e_1=0$ & $e_2e_2= 0$\\

\hline
${\bf N}_{04}$&${\bf B}_2(1)$&$\delta$ & $e_1e_1=0$ & $e_1e_2=0$ & $   e_2e_1=e_1$ & $ e_2e_2=0$\\

\hline
${\bf N}_{05}$&${\bf D}_1(0,0)$&$0$ & $e_1e_1=e_1$ & $ e_1e_2=e_1$ & $  e_2e_1=0$ & $e_2e_2= 0$\\

\hline
${\bf N}_{06}$&${\bf D}_1(1,0)$&$0$  & $e_1e_1=e_1$ & $ e_1e_2=0$ & $  e_2e_1= e_1  $ & $e_2e_2= 0$\\

\hline
${\bf N}_{07}$&${\bf D}_{2}(0,0)$&$\delta $ & $e_1e_1=e_1$ & $ e_1e_2=0$ & $e_2e_1=0$ & $e_2e_2= 0$\\

\hline
${\bf N}_{08}^\delta$&${\bf D}_{2}(0,\delta^{-1})$&$\delta $ & $e_1e_1=e_1$ & $ e_1e_2=0$ & $e_2e_1=\delta^{-1} e_2$ & $ e_2e_2=0$\\

\hline
${\bf N}_{09}$&${\bf D}_{2}(1,1)$&$\delta $ & $e_1e_1=e_1$ & $  e_1e_2=e_2$ & $ e_2e_1=e_2$ & $ e_2e_2=0$\\

\hline
${\bf N}_{10}^\alpha$&${\bf D}_{2}(\alpha\neq 1,1)$&$1$  & $e_1e_1=e_1$ & $ e_1e_2=\alpha e_2$ & $ e_2e_1=e_2$ & $ e_2e_2=0$\\

\hline
${\bf N}_{11}$&${\bf E}_1(0,0,0,0)$&$\delta $  & $e_1e_1=e_1$ & $ e_1e_2=0$ & $e_2e_1=0$ & $e_2e_2= e_2$\\



\hline
${\bf N}_{12}$&${\bf E}_1(-1,0,0,-1)$ &$-1$ & $e_1e_1=e_1$ & $  e_1e_2=-e_1 $ & $e_2e_1=  -e_2$ & $ e_2e_2=e_2$\\

\hline
\end{longtable}
 
\end{theorem}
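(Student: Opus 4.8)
The plan is to turn the classification into a finite, checkable computation by pairing the known list of all two-dimensional complex algebras from \cite{kv16} with the two defining identities. Fix a basis $e_1,e_2$ and write the structure constants $c_{ij}^{k}$ via $e_ie_j=\sum_k c_{ij}^{k}e_k$. Both defining identities are multilinear of degree three, so it suffices to test them on triples of basis vectors. Right commutativity \eqref{id: rcom} requires $(xy)z$ to be symmetric under $y\leftrightarrow z$, so the only nontrivial instances are $(e_ie_1)e_2=(e_ie_2)e_1$ for $i=1,2$; likewise $\delta$-left-symmetry \eqref{id: del-lsym} requires the $\delta$-associator $(x,y,z)_{\delta}^{\ast}$ to be symmetric in its first two arguments, giving the instances $(e_1,e_2,e_k)_{\delta}^{\ast}=(e_2,e_1,e_k)_{\delta}^{\ast}$ for $k=1,2$. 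Expanding these yields a system of quadratic equations in the eight structure constants with $\delta$ as a parameter.

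Rather than solve this system from scratch and then quotient by the $\mathrm{GL}_2(\mathbb{C})$ change-of-basis action — the genuinely laborious step — I would take the complete, already isomorphism-reduced list of two-dimensional algebras from \cite{kv16}, whose family names ${\bf A}_i,{\bf B}_i,{\bf D}_i,{\bf E}_i$ are precisely those recorded in the second column of the table. For each representative I substitute its parameterized structure constants into the equations above; each identity then becomes a polynomial condition on the family parameters together with a determination of the admissible scalar $\delta$. Solving these conditions extracts exactly the subfamilies that are $\delta$-Novikov and reads off the value $\gamma$ of $\delta$ for which the identities hold. Some survivors (such as ${\bf N}_{02}$, ${\bf N}_{04}$, ${\bf N}_{07}$) satisfy the identities for every $\delta$, so the third column carries the generic symbol $\delta$; others are rigid, forcing a specific value such as $\gamma=1$ (${\bf N}_{01}$, ${\bf N}_{10}^{\alpha}$), $\gamma=0$ (${\bf N}_{03}$, ${\bf N}_{05}$, ${\bf N}_{06}$), or $\gamma=-1$ (${\bf N}_{12}$).

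Because the \cite{kv16} list is already a transversal for isomorphism, essentially no further reduction is needed beyond discarding the trivial (zero-product) algebra and merging any coincidences that appear when a parameter specializes so that two nominally distinct representatives become isomorphic. The main obstacle is exhaustiveness and bookkeeping: one must test every family and every special parameter value, be sure that no $\delta$-Novikov structure is overlooked, and confirm that the resulting representatives are pairwise non-isomorphic. Since the surviving set is small and the equations are only quadratic, this is routine in principle but error-prone by hand, so I would carry it out — as the paper indicates — with the computer-algebra routine of \cite{Kadyrov}.
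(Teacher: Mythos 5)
Your proposal matches the paper's approach exactly: the paper likewise gives no hand computation, instead filtering the isomorphism-reduced list of two-dimensional algebras from \cite{kv16} through the two multilinear identities (tested on basis triples) and relying on the computational routine of \cite{Kadyrov} for the verification, which is precisely the procedure you describe. Your reduction of each identity to its nontrivial basis instances and your remarks on $\delta$-rigid versus $\delta$-generic families are correct and consistent with the table.
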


The above-mentioned theorem gives a crucial difference between Novikov and anti-Novikov algebras.
Due to Zelmanov's Theorem \cite{ze87}, there are no complex simple finite-dimensional non-commutative Novikov algebras, but in the anti-Novikov case, we have a different situation.

\begin{proposition}
    ${\bf N}_{12}$ is a complex simple finite-dimensional non-commutative anti-Novikov algebra
    and due to Theorem \ref{delt} it is not embedded in an  algebra ${\mathcal A}_{\varphi}$ from Example \ref{exnov}.

\end{proposition}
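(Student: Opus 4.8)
The plan is to verify the four asserted properties of $\mathbf{N}_{12}$ one at a time --- that it lies in the variety $\mathcal{V}ar_{-1}$, that it is non-commutative and finite-dimensional, that it is simple, and finally that it admits no embedding into any $\mathcal{A}_\varphi$ --- and to reserve the real work for the last two. The first two are read off directly: membership in $\mathcal{V}ar_{-1}$ (i.e. being an anti-Novikov algebra) is already recorded in the classification of Theorem \ref{cldn}, where the third column assigns $\mathbf{N}_{12}$ the value $\gamma=-1$; non-commutativity is immediate from $e_1 e_2 = -e_1 \neq -e_2 = e_2 e_1$; and $\dim \mathbf{N}_{12}=2$ settles finite-dimensionality.

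For simplicity I would show there is no proper nonzero two-sided ideal. Since the algebra is two-dimensional, such an ideal would be a line $I=\langle v\rangle$ with $v=\alpha e_1+\beta e_2$, and the idea is to test closure under left and right multiplication by the basis vectors. Computing $v e_1 = \alpha e_1-\beta e_2$ and demanding that it be proportional to $v$ forces the determinant condition $2\alpha\beta=0$, i.e. $\alpha=0$ or $\beta=0$; so the only candidate lines are $\langle e_1\rangle$ and $\langle e_2\rangle$. Each is then eliminated by the opposite-side product: $e_2 e_1=-e_2\notin\langle e_1\rangle$ kills $\langle e_1\rangle$ as a left ideal, and $e_1 e_2=-e_1\notin\langle e_2\rangle$ kills $\langle e_2\rangle$. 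Since $N^2=N\neq 0$ (visible from $e_1=e_1e_1$ and $e_2=e_2e_2$), the algebra is not the trivial abelian one, and having no proper nonzero ideal it is simple.

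The non-embedding is the conceptual step, and here I would invoke Theorem \ref{delt}. Because $\mathbf{N}_{12}$ is a $(-1)$-Novikov algebra, any $\mathcal{A}_\varphi$ into which it could embed must be built from a $(-1)$-derivation, so the relevant parameter is $\delta=-1\notin\{0,\tfrac12,1\}$. Part (1) of Theorem \ref{delt} then says every such $\mathcal{A}_\varphi$ is $3$-step nilpotent, hence nilpotent. A subalgebra of a nilpotent algebra is again nilpotent, whereas $N^2=N\neq 0$ shows $\mathbf{N}_{12}$ is not nilpotent. Therefore $\mathbf{N}_{12}$ cannot be isomorphic to a subalgebra of any $\mathcal{A}_\varphi$ from Example \ref{exnov}, which is precisely the claimed obstruction.

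The main obstacle I anticipate is bookkeeping rather than conceptual depth: making the ideal analysis exhaustive by checking both left- and right-multiplication closure for each candidate line, so that simplicity is airtight, and being careful that the target's parameter is genuinely pinned to $\delta=-1$ (which is automatic, since a $\delta$-derivation yields a $\delta$-Novikov algebra and the embedded copy of $\mathbf{N}_{12}$ already fixes $\delta=-1$). Once these are nailed down, the clash between nilpotency of $\mathcal{A}_\varphi$ and $N^2=N$ for $\mathbf{N}_{12}$ closes the argument.
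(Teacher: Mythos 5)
Your proof is correct and takes essentially the approach the paper intends: the paper states this proposition without any written proof, and your argument (non-commutativity and simplicity read off the multiplication table by ruling out the candidate one-dimensional ideals $\langle e_1\rangle$, $\langle e_2\rangle$, then non-embeddability from the $3$-step nilpotency of ${\mathcal A}_{\varphi}$ in Theorem \ref{delt}(1) against $N^2=N\neq 0$) supplies exactly the missing routine details. The only step deserving explicit mention is the pinning of the parameter to $\delta=-1$, which you correctly note is forced because a subalgebra of ${\mathcal A}_\varphi$ inherits the $\delta$-Novikov identities and ${\bf N}_{12}$ satisfies \eqref{id: del-lsym} only for $\delta=-1$ (e.g.\ $x=z=e_1$, $y=e_2$ gives $-(\delta+1)e_1=(\delta+1)e_2$).
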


It is known that each Novikov algebra can be embedded in a suitable commutative associative algebra with a derivation under the multiplication given by \eqref{mult nov in com} \cite{Bokut}. 
A similar problem has appeared for   $\delta$-Novikov algebras.

\noindent {\bf Open question}. If $\delta\neq -1,$ is  a nontrivial    $\delta$-Novikov algebra embedded in a suitable  associative commutative algebra with a $\delta$-derivation?


\begin{definition}\label{def: delta-pl} Let $P$ be an algebra and $\delta$ be a fixed complex number. Then    $P$ is called a $\delta$-pre-Lie    algebra\footnote{For $\delta=-1$ we have the definition of anti-pre-Lie algebras introduced in \cite{LB}.} if it satisfies \eqref{id: del-lsym} and \eqref{[xy]z}.
\end{definition}

\begin{proposition}
        Let $P$ be a  $\delta$-Novikov algebra, then $P$ is  $\delta$-pre-Lie.
        Let $P$ be a  $\delta$-pre-Lie    algebra, then $P$ is Lie-admissible.
        Let $P$ be a  $\delta$-left-symmetric  algebra ($\delta\neq1$), 
        then $P$ is Lie-admissible if and only if it satisfies  \eqref{[xy]z} or \eqref{x[yz]} .

\end{proposition}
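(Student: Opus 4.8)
The plan is to establish the three implications in sequence, relying on the identities and definitions already set up.

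For the first claim, that a $\delta$-Novikov algebra $P$ is $\delta$-pre-Lie: by Definition \ref{def: delta-pl}, I need to verify that $P$ satisfies both \eqref{id: del-lsym} and \eqref{[xy]z}. The identity \eqref{id: del-lsym} holds by definition of a $\delta$-Novikov algebra (it is $\delta$-left-symmetric). The identity \eqref{[xy]z}, namely $[x,y]z+[y,z]x+[z,x]y=0$, is precisely the first identity proved in part \ref{ident} of Lemma \ref{bdnov}, which followed from right commutativity \eqref{id: rcom}. So this implication is immediate from the lemma already in hand.

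For the second claim, that every $\delta$-pre-Lie algebra is Lie-admissible: recall from the proof of Lemma \ref{bdnov} that \eqref{x[yz]} was derived as a consequence of \eqref{id: del-lsym} together with \eqref{[xy]z}. A $\delta$-pre-Lie algebra has both of these by definition, so it satisfies \eqref{x[yz]} as well. Having both \eqref{[xy]z} and \eqref{x[yz]} is exactly what Lemma \ref{bdnov} identifies with Lie-admissibility (the commutator satisfying the Jacobi identity amounts to the sum of these two cyclic expressions being zero). Thus Lie-admissibility follows.

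The third claim is the genuine content and where I expect the real work. For a $\delta$-left-symmetric algebra with $\delta\neq1$, I must show Lie-admissibility is equivalent to satisfying \eqref{[xy]z}, and equivalently to satisfying \eqref{x[yz]}. The forward direction of each is clear since Lie-admissibility gives the Jacobi identity for $[\cdot,\cdot]$. The crux is the converse and the mutual equivalence of \eqref{[xy]z} and \eqref{x[yz]} \emph{under} \eqref{id: del-lsym} alone (without right commutativity). The key computation is to expand $\circlearrowright_{x,y,z}[[x,y],z]$, the Jacobi expression for the commutator, in terms of the underlying product, and to compare it against the two cyclic sums \eqref{[xy]z} and \eqref{x[yz]}. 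Writing $J := \circlearrowright_{x,y,z}\big(x[y,z]-[y,z]x\big)$ and using \eqref{id: del-lsym} to rewrite associators, I expect $J$ to reduce to a scalar multiple of (the sum or difference of) \eqref{[xy]z} and \eqref{x[yz]}, with the factor $(1-\delta)$ or $(\delta-1)$ appearing; this is exactly where the hypothesis $\delta\neq1$ is used, to divide out. The main obstacle is the careful bookkeeping of the twelve associator terms and tracking how \eqref{id: del-lsym} lets one trade $\delta(ab)c$ for $a(bc)$ terms, to confirm that Lie-admissibility, \eqref{[xy]z}, and \eqref{x[yz]} all collapse to the same relation once $\delta\neq1$. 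I would organize this as a single display relating $J$ to a $(1-\delta)$-multiple of the cyclic sum, from which all three equivalences read off at once.
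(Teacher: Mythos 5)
Your proposal is correct and takes essentially the same route as the paper, whose entire proof reads ``Follows from Lemma \ref{bdnov}'': your first two claims invoke exactly part \ref{ident} of that lemma, and your third claim rests on the same computation already written out in the proof of that lemma, namely that \eqref{id: del-lsym} alone gives
\[
x[y,z]+y[z,x]+z[x,y]\;=\;\delta\big([x,y]z+[y,z]x+[z,x]y\big),
\]
hence $\circlearrowright_{x,y,z}[[x,y],z]=(1-\delta)\big([x,y]z+[y,z]x+[z,x]y\big)$, from which claim 3 reads off for $\delta\neq1$; so the ``twelve-term bookkeeping'' you defer is not needed afresh, being verbatim the calculation you already cite for claim 2. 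Two inaccuracies in your commentary, neither fatal: the Jacobi expression is the \emph{difference}, not the sum, of the cyclic sums \eqref{[xy]z} and \eqref{x[yz]}; and the forward direction of claim 3 is \emph{not} ``clear since Lie-admissibility gives the Jacobi identity'' --- the Jacobi identity only forces the two cyclic sums to be equal, and one needs the displayed relation together with $\delta\neq1$ to conclude that they vanish (for $\delta=1$, e.g.\ the associative algebra of $2\times2$ matrices is Lie-admissible but fails \eqref{[xy]z}). Your planned final organization, reading all equivalences off the single $(1-\delta)$ display, does repair this. Finally, a caveat you share with the paper's statement: since \eqref{x[yz]} equals $\delta$ times \eqref{[xy]z}, for $\delta=0$ the identity \eqref{x[yz]} holds identically in every $0$-left-symmetric algebra, so in that case Lie-admissibility is equivalent to \eqref{[xy]z} but not to \eqref{x[yz]}.
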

\begin{proof}
    Follows from Lemma \ref{bdnov}.

\end{proof}

\begin{definition}\label{dRB}
Let ${\mathcal A}$ be an algebra and $\delta$ be a fixed complex number.  
Then  a linear map $R$ is a $\delta$-Rota--Baxter operator\footnote{For $\delta=1$ we recover the definition of Rota-Baxter operator (see, for example, \cite{MS}) and for $\delta=-1$ we recover the definition of anti-Rota-Baxter operator \cite{chen,LB}.} of  weight $\lambda\in \{0,1\}$  if it satisfies
\begin{center}
$ R(x) R(y) \ = \ \delta R\big(R(x)y+ x R(y) - \lambda xy\big).$
\end{center}
\end{definition}

\begin{proposition}
    Let $\mathfrak{g}$ be a Lie algebra, $R$ be a $\delta$-Rota--Baxter operator of weight $0$ and a new multiplication $\star$ is defined as $x \star y =R(x)y.$ 
    Then, $(\mathfrak{g}, \star)$ is a $\delta$-left-symmetric  algebra;
    $(\mathfrak{g}, \star)$ is a $\delta$-pre-Lie    algebra if and only if 
    $\circlearrowright_{x,y,z}(R(x)R(y))z=0.$
\end{proposition}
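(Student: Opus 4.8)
The plan is to push everything down into the Lie product of $\mathfrak{g}$, which I write by juxtaposition, so that $x\star y = R(x)y$ and the $\delta$-Rota--Baxter identity of weight $0$ reads $R(x)R(y)=\delta R\big(R(x)y+xR(y)\big)$. Throughout, the associator and commutator appearing in \eqref{id: del-lsym} and \eqref{[xy]z} are taken with respect to $\star$, whereas $R(x)R(y)$ and $(R(x)R(y))z$ denote the Lie product of $\mathfrak{g}$. The only two tools I will need are this Rota--Baxter identity and the Jacobi identity of $\mathfrak{g}$, which I will use in the form $(ab)c = a(bc)-b(ac)$.

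First, to establish $\delta$-left-symmetry I would expand the combination $\delta(x\star y)\star z - x\star(y\star z) - \delta(y\star x)\star z + y\star(x\star z)$ and split it into two groups. The $\delta$-weighted group is
\[
\delta\big((x\star y)\star z - (y\star x)\star z\big) = \delta\,\big(R(R(x)y)-R(R(y)x)\big)z,
\]
and since $R(R(x)y)-R(R(y)x)=R\big(R(x)y+xR(y)\big)$, the Rota--Baxter identity turns $\delta\big(R(R(x)y)-R(R(y)x)\big)$ into $R(x)R(y)$, so this group equals $(R(x)R(y))z$. The remaining group is
\[
-\,x\star(y\star z)+y\star(x\star z) = -R(x)\big(R(y)z\big)+R(y)\big(R(x)z\big),
\]
which by Jacobi equals $-(R(x)R(y))z$. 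The two groups cancel, so \eqref{id: del-lsym} holds; note this argument is valid for every $\delta$.

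Second, since $(\mathfrak{g},\star)$ is already $\delta$-left-symmetric, by Definition \ref{def: delta-pl} it is $\delta$-pre-Lie exactly when it additionally satisfies \eqref{[xy]z} for $\star$, i.e. when $\circlearrowright_{x,y,z}[x,y]\star z=0$ with $[x,y]=x\star y-y\star x=R(x)y-R(y)x$. Using $R(x)y-R(y)x=R(x)y+xR(y)$ and the Rota--Baxter identity once more gives $\delta\,R\big(R(x)y-R(y)x\big)=R(x)R(y)$, hence $\delta\,[x,y]\star z=\delta\,R\big(R(x)y-R(y)x\big)z=(R(x)R(y))z$. Summing cyclically yields $\delta\,\circlearrowright_{x,y,z}[x,y]\star z=\circlearrowright_{x,y,z}(R(x)R(y))z$, so for $\delta\neq 0$ the identity \eqref{[xy]z} holds if and only if $\circlearrowright_{x,y,z}(R(x)R(y))z=0$, which is the claim.

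The hard part is not any single computation but the bookkeeping: keeping the $\star$-product, the $\star$-commutator, and the underlying Lie product of $\mathfrak{g}$ clearly separated, and applying the Rota--Baxter identity in the correct direction, since that is precisely what converts the "double-$R$" terms $\delta R(R(x)y)z$ into the single Lie product $(R(x)R(y))z$; once that is done the left-symmetry cancellation is nothing but Jacobi. The only genuinely delicate point is the division by $\delta$ in the second part, so I would either state the equivalence under the standing assumption $\delta\neq0$, or record separately that when $\delta=0$ the Rota--Baxter identity forces $R(x)R(y)=0$ and hence $\circlearrowright_{x,y,z}(R(x)R(y))z=0$ automatically.
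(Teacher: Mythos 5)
Your core computations are correct and essentially reproduce the paper's own proof: both arguments use anticommutativity of the Lie bracket together with the Rota--Baxter identity to convert $\delta\big(R(R(x)y)-R(R(y)x)\big)$ into $R(x)R(y)$, use the Jacobi identity in the form $(ab)c=a(bc)-b(ac)$ for the $\delta$-left-symmetry cancellation, and then reduce the pre-Lie criterion to the relation $\delta\,\circlearrowright_{x,y,z}[x,y]\star z=\circlearrowright_{x,y,z}(R(x)R(y))z$; your standing assumption $\delta\neq 0$ is exactly the paper's implicit one, since its proof also writes $\delta^{-1}$.

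One correction to your closing remark, though: your second proposed option for handling $\delta=0$ does not repair the statement. Observing that $\delta=0$ forces $R(x)R(y)=0$, hence $\circlearrowright_{x,y,z}(R(x)R(y))z=0$, only makes the right-hand condition vacuous; it does not make $(\mathfrak{g},\star)$ a $0$-pre-Lie algebra, so the ``if'' direction of the equivalence can genuinely fail at $\delta=0$. Concretely, take $\mathfrak{g}=\mathfrak{sl}_2$ with Lie product written by juxtaposition, $he=2e$, $hf=-2f$, $ef=h$, and define $R(e)=e$, $R(f)=R(h)=0$. Since the image of $R$ is spanned by $e$, we have $R(x)R(y)=0$ for all $x,y$, so $R$ is a $0$-Rota--Baxter operator of weight $0$ and the cyclic condition holds trivially; yet with $[x,y]=x\star y-y\star x$ one computes $[e,f]\star h+[f,h]\star e+[h,e]\star f = R(h)h + R(0)e + R(2e)f = 2h\neq 0$, so \eqref{[xy]z} fails and $(\mathfrak{g},\star)$ is not $0$-pre-Lie (it is still $0$-left-symmetric, consistent with your first part). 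The only correct repair is your first option: state the equivalence under the hypothesis $\delta\neq 0$, which is what the paper tacitly assumes.
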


\begin{proof}
  It is easy to see that, 

  \begin{longtable}{lclcl}
$(x,y,z)^{\star}_{\delta}$ & $=$ & 
$\delta R(R(x)y)z - R(x)(R(y)z)$ & $=$\\

& $=$ & $ (R(x)R(y))z - \delta R(xR(y))z- (R(x)R(y))z- R(y)(R(x)z)$
 & $=$ & $(y,x,z)_{\delta}^{\star}.$  

\end{longtable}
On the other hand, 
\begin{longtable}{lcl}

$\circlearrowright_{x,y,z}[x,y] \star z$ & $=$ &
$R(R(x)y-R(y)x)z+R(R(y)z-R(z)y)x+R(R(z)x-R(x)z)y$ \\
&$=$ &
$\delta^{-1} \big( (R(x)R(y))z+(R(y)R(z))x+(R(z)R(x))y \big).$ \\

\end{longtable}
\end{proof}

\begin{corollary}
    Let $\mathcal{A}$ be an associative  algebra, $R$ be a $\delta$-Rota--Baxter operator of weight $0$ and a new multiplication $\star$ is defined as $x \star y =R(x)y-yR(x).$ 
    Then, $(\mathcal{A}, \star)$ is a $\delta$-left-symmetric  algebra.
\end{corollary}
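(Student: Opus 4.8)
The plan is to deduce this corollary from the preceding proposition by passing to the commutator Lie algebra of $\mathcal{A}$, rather than expanding the $\delta$-associator by hand. The key observation is that the new product rewrites in terms of the associative commutator: setting $[a,b] := ab - ba$ for the commutator in $\mathcal{A}$, we have $x \star y = R(x)y - yR(x) = [R(x), y]$. Since $\mathcal{A}$ is associative, $(\mathcal{A}, [\cdot,\cdot])$ is a Lie algebra, and the product $\star$ then has exactly the shape $x \star y = R(x)y$ treated in the previous proposition, with the commutator bracket now playing the role of the ambient Lie product.

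The only point requiring verification is that $R$ remains a $\delta$-Rota--Baxter operator of weight $0$ with respect to the bracket $[\cdot,\cdot]$; once this is in hand, the previous proposition applies verbatim. To check it, I would start from the associative weight-$0$ identity
\begin{equation*}
R(x)R(y) \ = \ \delta R\big(R(x)y + xR(y)\big)
\end{equation*}
and antisymmetrize in $x$ and $y$. Subtracting the same identity with $x$ and $y$ interchanged, the left-hand side becomes $R(x)R(y) - R(y)R(x) = [R(x), R(y)]$, while the right-hand side collects into
\begin{equation*}
\delta R\big(R(x)y - yR(x) + xR(y) - R(y)x\big) \ = \ \delta R\big([R(x),y] + [x, R(y)]\big).
\end{equation*}
This is precisely the weight-$0$ $\delta$-Rota--Baxter relation for $R$ on the Lie algebra $(\mathcal{A}, [\cdot,\cdot])$.

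With both ingredients assembled, I would invoke the preceding proposition with $\mathfrak{g} = (\mathcal{A}, [\cdot,\cdot])$ and this operator $R$: it asserts that $(\mathfrak{g}, \star)$ with $x \star y = R(x)y$ is $\delta$-left-symmetric, and since our $\star$ coincides with that product, the conclusion follows immediately. There is no real obstacle here beyond the one-line antisymmetrization; the whole point is that the associative hypothesis enters only through its commutator, so the statement reduces to the Lie-algebra case already settled. If a self-contained argument were preferred, one could instead expand $(x,y,z)^{\star}_{\delta} - (y,x,z)^{\star}_{\delta}$ directly using associativity of $\mathcal{A}$ together with the weight-$0$ identity, but this is strictly more laborious and would effectively reprove the previous proposition en route.
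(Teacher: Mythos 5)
Your proposal is correct and matches the paper's intended argument: the paper states this result as an immediate corollary of the preceding proposition on Lie algebras, and the deduction it relies on is exactly your reduction — pass to the commutator Lie algebra $(\mathcal{A},[\cdot,\cdot])$, check by antisymmetrizing the associative weight-$0$ identity that $R$ is still a $\delta$-Rota--Baxter operator of weight $0$ for the bracket, and observe that $x\star y=[R(x),y]$ so the proposition applies verbatim. Nothing is missing; your antisymmetrization step is the only verification required.
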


\begin{proposition}
        Let $\mathcal{A}$ be a $\delta$-associative  algebra, $R$ be a Rota--Baxter operator of weight $1$ and a new multiplication $\star$ is defined as 
        $x \star y =R(x)y+ y R(x)-xy.$ 
    Then, $(\mathcal{A}, \star)$ is also a $\delta$-associative    algebra.
\end{proposition}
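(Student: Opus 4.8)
The plan is to verify the $\delta$-associativity of $\star$ by a direct expansion of the $\delta$-associator
\[
(x,y,z)^{\star}_{\delta}=\delta\,(x\star y)\star z-x\star(y\star z),
\]
and to show it vanishes using only two inputs: the $\delta$-associativity of the underlying product, written as $\delta(ab)c=a(bc)$, and the defining relation of a weight-$1$ $\delta$-Rota--Baxter operator from Definition \ref{dRB}, namely $R(x)R(y)=\delta R\big(R(x)y+xR(y)-xy\big)$. Since the reductions below repeatedly divide by $\delta$, I assume $\delta\neq0$ throughout (the standing hypothesis already excludes $\delta=1$).

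First I would expand $x\star(y\star z)$ directly: writing $y\star z=R(y)z+zR(y)-yz$ and feeding it into $x\star(\,\cdot\,)=R(x)(\,\cdot\,)+(\,\cdot\,)R(x)-x(\,\cdot\,)$ produces nine monomials. For the other side I would first compute $R(w)$ with $w=x\star y$. Linearity together with the $\delta$-Rota--Baxter relation gives
\[
R(x\star y)=\delta^{-1}R(x)R(y)-R\big(xR(y)\big)+R\big(yR(x)\big),
\]
and then $(x\star y)\star z=R(w)z+zR(w)-wz$ expands into a second family of monomials, which I would substitute into $\delta\,(x\star y)\star z$.

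The reduction then rests on a short list of bridging identities obtained by chaining the two hypotheses. The representative one, obtained by applying $\delta$-associativity and then the Rota--Baxter relation, is
\[
R(x)\big(R(y)z\big)=\delta\big(R(x)R(y)\big)z=\delta^{2}\big(R(R(x)y)z+R(xR(y))z-R(xy)z\big),
\]
together with its left--right mirror for terms of the shape $z\big(R(x)R(y)\big)$. These trade every monomial carrying two adjacent factors of $R$ for $R(\text{product})\cdot(\text{letter})$ terms, while $\delta$-associativity normalises all remaining triple products of plain letters (and the ones where the two $R$'s are separated) to a single bracketing. After these substitutions I would collect like monomials and check that they cancel in pairs, so that $(x,y,z)^{\star}_{\delta}=0$.

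The hard part will be the asymmetry of $x\star y=R(x)y+yR(x)-xy$. For the classical double product $R(x)y+xR(y)-xy$ the operator $R$ is an algebra morphism, $R(x\star y)=R(x)R(y)$, and associativity of the new product is routine; here the displayed formula for $R(x\star y)$ carries the extra tail $R(yR(x))-R(xR(y))$, and the right-multiplication term $yR(x)$ creates left-normed products with no counterpart in the right-normed expansion of $x\star(y\star z)$. The crux is to show that these surplus contributions cancel globally rather than termwise, and this is exactly where $\delta$-associativity is indispensable: for $\delta\neq0,1$ it moreover forces $\mathcal{A}$ to be nilpotent of index at most four (all products of four elements vanish), so the bookkeeping stays finite. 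Organising the substitutions so that the cancellation becomes transparent is the only genuinely delicate point of the proof.
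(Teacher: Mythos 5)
Your plan rests on a misreading of the hypothesis, and the misreading is fatal. The proposition says ``Rota--Baxter operator of weight $1$'', which by the footnote to Definition \ref{dRB} is the $\delta=1$ instance of that definition, i.e.\ the classical identity $R(x)R(y)=R\big(R(x)y+xR(y)-xy\big)$; the parameter $\delta$ in the proposition lives only in the $\delta$-associativity of $\mathcal{A}$, not in the operator. You instead assume $R(x)R(y)=\delta R\big(R(x)y+xR(y)-xy\big)$, and under that assumption the statement is generically false for $\delta\neq 1$: even after passing to the standard double product $x\star y=R(x)y+xR(y)-xy$, your relation gives $R(x\star y)=\delta^{-1}R(x)R(y)$, so five of the seven natural groups of terms assemble into vanishing $\delta$-associators of $\mathcal{A}$, but the two terms carrying $R(x\star y)$ and $R(y\star z)$ pick up stray factors $\delta^{-1}$ and leave the residue $(1-\delta)\big[(R(x)R(y))z-(xR(y))R(z)\big]$, which has no reason to vanish. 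So the cancellation you defer (``collect like monomials and check that they cancel in pairs'') would not close, and your bridging identities with their $\delta^{2}$ coefficients only reshuffle the obstruction. Your nilpotency remark is also a red herring: it is true that $\delta$-associativity with $\delta\neq0,1$ kills all products of four elements (compare the two rebracketing paths from $((xy)z)t$ to $x(y(zt))$, which produce $\delta^{-2}$ versus $\delta^{-3}$), but every monomial in the $\star$-associator is a product of exactly three factors, since $R$ is linear; degree-four nilpotency therefore buys nothing here.

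With the correct hypothesis the paper's proof is a short direct verification. The classical weight-$1$ identity says precisely that $R$ is multiplicative for the double product, $R(x\star y)=R(x)R(y)$ --- which also shows that the $yR(x)$ in the statement is a misprint for $xR(y)$: the paper's own expansion uses $x\star y=R(x)y+xR(y)-xy$, and with the literal $yR(x)$ the term $R(yR(x))$ has no partner, exactly the ``asymmetry'' you rightly flagged as the crux but did not resolve. Expanding $\delta(x\star y)\star z-x\star(y\star z)$ and regrouping yields
\[
(x,y,z)^{\star}_{\delta}=(R(x),R(y),z)_{\delta}+(R(x),y,R(z))_{\delta}+(x,R(y),R(z))_{\delta}-(x,y,R(z))_{\delta}-(R(x),y,z)_{\delta}-(x,R(y),z)_{\delta}+(x,y,z)_{\delta},
\]
and each summand vanishes because $\mathcal{A}$ is $\delta$-associative. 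No division by $\delta$, no nilpotency, no global cancellation argument: the identity holds termwise, for every $\delta$ including $0$ and $1$.
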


\begin{proof}
    It is easy to see that 
\begin{longtable}{lclclcc}
$(x,y,z)^\star_\delta$ & $=$ & $\delta (x\star y)\star z - x \star (y \star z)$ & $=$&\\ 
\multicolumn{5}{l}{$\delta \big( R(R(x)y+xR(y)-xy)z+(R(x)y+xR(y)-xy)R(z) - (R(x)y)z - (xR(y))z+(xy)z \big)$}\\
\multicolumn{5}{r}{$
-\big( R(x)(R(y)z+yR(z)-yz) +xR(R(y)z+yR(z)-yz) -x(R(y)z)-x(yR(z))+x(yz) \big)$}&$=$\\
\multicolumn{5}{l}{$\quad
(R(x),R(y),z)_\delta +(R(x),y,R(z))_\delta+ (x,R(y),R(z))_\delta-$}\\
\multicolumn{5}{r}{$(x,y,R(z))_\delta
-(R(x),y,z)_\delta-(x,R(y),z)_\delta+(x,y,z)_\delta$}&$=$&$0.$

\end{longtable}
    
\end{proof}

\begin{theorem}\label{cldpl}
    
Each $2$-dimensional $\delta$-left-symmetric algebra is $\delta$-pre-Lie.
Let ${\bf P}$ be a nontrivial $2$-dimensional $\delta$-pre-Lie algebra, 
then it is isomorphic to a $\delta$-Novikov algebra, 
listed in Theorem \ref{cldn} or one algebra listed below (the number $\gamma$ in the third column means that it is a $\gamma$-pre-Lie algebra).

\begin{longtable}{|l|l|c|l|l|}

\hline
${\bf P}_{01}$& ${\bf A}_1(1)$ &$\delta$ & $e_1e_1=e_1+e_2$&
 $e_1e_2= e_2$\\ &&& $e_2e_1=0 $& $e_2e_2=0$ \\

\hline
${\bf P}_{02}^{\delta}$&${\bf A}_1(\frac{\delta-1}{2\delta-1}),$ &$\delta$ & $e_1e_1= e_1+e_2$& $e_1e_2=\frac{\delta-1}{2\delta-1} e_2$\\ 
&$\delta\notin\{   \frac{1}{2},1\}$&&  $e_2e_1= \frac{\delta}{2\delta-1} e_2$& $e_2e_2=0$ \\

\hline
${\bf P}_{03}$&${\bf A}_2$ &$ \frac{1}{2}$ & $e_1e_1=e_2$& $e_1e_2=e_2$\\ &&& $e_2e_1= -e_2$& $e_2e_2=0$ \\

\hline
${\bf P}_{04}^{\delta}$&${\bf B}_2(\frac{\delta}{2\delta-1}), $  &$\delta $& $e_1e_1=0$ & $e_1e_2=\frac{\delta-1}{2\delta-1}e_1$ \\ &$\delta\notin \{0,\frac{1}{2},1\}$&& $e_2e_1= \frac{\delta}{2\delta-1} e_1$ & $e_2e_2= 0$\\

\hline
${\bf P}_{05}$&${\bf B}_3$&$ \frac{1}{2}$  & $e_1e_1=0$ & $ e_1e_2= e_2$ \\ &&& $e_2e_1=-e_2$ & $e_2e_2= 0$ \\

\hline
${\bf P}_{06}$&${\bf C}(1,0)$ &$ 0$  & $e_1e_1=e_2$ & $ e_1e_2= 0$ \\ &&& $e_2e_1= e_1$ & $ e_2e_2=e_2$ \\

\hline
${\bf P}_{07}$&${\bf D}_1(1,1)$ &$ 0$  & $e_1e_1=e_1$ & $ e_1e_2=   e_2$ \\ &&& $ e_2e_1= e_1 -    e_2$ & $ e_2e_2=0$\\

\hline
${\bf P}_{08}^{\alpha}$&${\bf D}_{2}(\alpha\neq0,0)$ &$\delta$  & $e_1e_1=e_1$ & $ e_1e_2= \alpha e_2$ \\ &&& $e_2e_1=0$ & $ e_2e_2=0$\\

\hline
${\bf P}_{09}^{\beta,\delta}$&${\bf D}_{2}(\frac{\delta\beta-1}{\delta-1},\beta),$ &$\delta$  & $e_1e_1=e_1$ & $ e_1e_2= \frac{\delta\beta-1}{\delta-1}
 e_2$ \\ &$\beta\notin \{ 1, \delta^{-1} \}, \delta\neq 1$&& $e_2e_1=\beta e_2$ & $ e_2e_2=0$\\


\hline
${\bf P}_{10}^{\beta}$&${\bf E}_1(0,\beta,\frac{1}{\beta},0) $&$ 0$ & $e_1e_1=e_1$ & $ e_1e_2=   \beta  e_2$ \\&$\beta\notin \{ 0,1 \}$ && $e_2e_1=\frac{1}{\beta}  e_1  $ & $ e_2e_2= e_2$\\

\hline
${\bf P}_{11}^{\alpha,\beta}$&${\bf E}_1(\alpha,\beta,1,0),$ &$ 0$ & $e_1e_1=e_1$ & $ e_1e_2= \alpha e_1+ \beta  e_2$ \\ &$(\alpha,\beta)\notin \{ (0,1),(1,0)\}$&& $ e_2e_1= e_1$ & $e_2e_2= e_2$\\



\hline
${\bf P}_{12}$&${\bf E}_1(0,1,1,0)$&$\delta $ & $e_1e_1=e_1$ & $ e_1e_2=   e_2$ \\&&& $e_2e_1=  e_1 $ & $ e_2e_2=e_2$\\

\hline
${\bf P}_{13}^{\delta}$&${\bf E}_1(\delta, 1+\delta, 1+\delta,\delta)$&$\delta $   & $e_1e_1=e_1$ & $  e_1e_2=\delta e_1+ (1+\delta)  e_2$ \\ 
&$\delta\notin \{ -1,0\}$&& $e_2e_1=(1+\delta)  e_1 + \delta  e_2$ & $e_2e_2= e_2$\\

\hline

${\bf P}_{14}^{\delta}$&${\bf E}_1(\frac{\delta}{2\delta-1},\frac{\delta-1}{2\delta-1},\frac{\delta-1}{2\delta-1},\frac{\delta}{2\delta-1})$&$\delta $   & $e_1e_1=e_1$ & $e_1e_2= \frac{\delta}{2\delta-1} e_1+ \frac{\delta-1}{2\delta-1}  e_2$ \\&$\delta\notin \{\frac{1}{2}, 1\}$&& $e_2e_1=\frac{\delta-1}{2\delta-1}  e_1 + \frac{\delta}{2\delta-1}  e_2$ & $e_2e_2= e_2$\\





\hline
\end{longtable}

\end{theorem}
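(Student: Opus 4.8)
The plan is to treat the two assertions separately. For the first assertion, I would show that the identity \eqref{[xy]z} holds automatically in \emph{every} two-dimensional algebra, so that over dimension two it imposes no condition beyond \eqref{id: del-lsym}. The left-hand side $f(x,y,z)=[x,y]z+[y,z]x+[z,x]y$ is trilinear, and a direct check shows it vanishes whenever two of its arguments coincide: for instance $f(x,x,z)=[x,z]x+[z,x]x=0$ since $[z,x]=-[x,z]$, and symmetrically for $x=z$ and for $y=z$. By multilinearity it suffices to evaluate $f$ on triples of basis vectors; in dimension two any triple $(e_{i_1},e_{i_2},e_{i_3})$ with $i_j\in\{1,2\}$ has, by the pigeonhole principle, a repeated entry, so $f$ vanishes on it. Hence $f\equiv 0$ in any two-dimensional algebra, and a two-dimensional $\delta$-left-symmetric algebra, already satisfying \eqref{id: del-lsym}, therefore satisfies \eqref{[xy]z} as well, i.e. it is $\delta$-pre-Lie.

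For the second assertion, I would carry out a classification built on the known list of all two-dimensional algebras over an algebraically closed field from \cite{kv16}. Fixing a basis $e_1,e_2$ and writing $e_ie_j=\sum_k c_{ij}^k e_k$, the single defining identity \eqref{id: del-lsym} need only be verified on the eight basis triples $(x,y,z)\in\{e_1,e_2\}^3$; since the $\delta$-associator is symmetric in its first two slots, these collapse to a small number of independent polynomial equations in the structure constants $c_{ij}^k$ and the parameter $\delta$. Imposing these constraints on each family ${\bf A}_1(\cdot),{\bf A}_2,{\bf A}_3,{\bf B}_2(\cdot),{\bf B}_3,{\bf C}(\cdot),{\bf D}_1(\cdot),{\bf D}_2(\cdot),{\bf E}_1(\cdot)$ of the classification in \cite{kv16} determines, family by family, exactly which parameter values (and which $\delta$) yield a $\delta$-left-symmetric algebra. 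Separating off those solutions that in addition satisfy the right-commutative identity \eqref{id: rcom} — which are precisely the $\delta$-Novikov algebras of Theorem \ref{cldn} — leaves the genuinely new entries ${\bf P}_{01}$–${\bf P}_{14}$.

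The main obstacle is not the identity check itself but the isomorphism bookkeeping: one must reduce the surviving structure constants to canonical normal forms under the full change of basis in $\mathrm{GL}_2$, confirm that distinct table entries are pairwise non-isomorphic (distinguishing the parametric families ${\bf P}_{02}^{\delta}$, ${\bf P}_{09}^{\beta,\delta}$, ${\bf P}_{13}^{\delta}$, ${\bf P}_{14}^{\delta}$ by isomorphism invariants such as traces and ranks of the left and right multiplication operators), and verify that every admissible algebra occurs exactly once. This is a finite but lengthy case analysis, which is why the computation is most efficiently delegated to the program referenced in \cite{Kadyrov}; throughout, the sole role of \eqref{id: del-lsym} is to cut the full two-dimensional classification down to the listed subfamilies, while Part~1 guarantees that no further restriction arises from \eqref{[xy]z}.
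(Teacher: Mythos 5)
Your proposal is correct, and for the classification part it coincides with what the paper actually does: Theorem \ref{cldpl} is stated without a written proof, and the text preceding Theorem \ref{cldn} makes clear that these tables are produced by running through the list of all two-dimensional algebras of \cite{kv16} and checking the defining identities, with the computation delegated to the program of \cite{Kadyrov} — exactly your Part 2, including the deferral of the case analysis to software. What you add beyond the paper is an actual argument for the first assertion, which the paper nowhere isolates: you observe that $f(x,y,z)=[x,y]z+[y,z]x+[z,x]y$ is trilinear and vanishes whenever two of its arguments coincide (by anticommutativity of the commutator), so by multilinearity and the pigeonhole principle it vanishes on every triple of basis vectors of a two-dimensional space; hence \eqref{[xy]z} is an identity of \emph{every} two-dimensional algebra, over any field, with no appeal to \eqref{id: del-lsym} at all. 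This is a clean conceptual proof; it explains a priori why the $\delta$-pre-Lie classification is obtained from the list in \cite{kv16} by imposing only \eqref{id: del-lsym}, and it reduces the identity check in your Part 2 to the two essentially distinct basis triples $(e_1,e_2,e_1)$ and $(e_1,e_2,e_2)$. One minor overstatement on your side: since \cite{kv16} already gives representatives up to isomorphism and satisfaction of a multilinear identity is an isomorphism invariant, the ``full $\mathrm{GL}_2$ normal-form reduction'' you anticipate is essentially inherited from \cite{kv16}; what remains is only to record which parameter values within each family survive the identities and which of them land in Theorem \ref{cldn} versus the list ${\bf P}_{01}$–${\bf P}_{14}$, which is precisely what the cited program does.
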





\section{$\delta$-Novikov--Poisson algebras}\label{dnp}

In this section, we introduce $\delta$-Novikov--Poisson algebras, which are generalization of Novikov--Poisson algebras.  Also, we generalize several theorems from Novikov--Poisson algebras \cite{Xu96, xu} to the context of $\delta$-Novikov--Poisson algebras.

\begin{definition}\label{def: delta-nov-pois} Let $\delta$ be a fixed complex number.  A $\delta$-Novikov--Poisson algebra $(N,\cdot, \circ)$ is a vector space $N$ with two operations $\cdot$ and $\circ$ such that $(N,\cdot)$ is a commutative associative algebra and $(N,\circ)$ is a $\delta$-Novikov algebra for which  the following identities hold true:
    \begin{equation}\label{id: NP ass}
        (x  y)\circ z=x   (y \circ z),
    \end{equation}
    \begin{equation}\label{id: NP leftsym}
       \delta (x\circ y)  z-x \circ (y   z)=\delta (y\circ x)  z-y \circ (x   z).
    \end{equation} 
\end{definition}

\begin{example}
    Let $({\mathcal A},\cdot)$ is a commutative associative algebra
and $\varphi$ is a $\delta$-derivation on ${\mathcal A}$, then $({\mathcal A},\cdot,\circ)$ is a ${\delta}$-Novikov--Poisson algebra under multiplication \eqref{mult nov in com}.\end{example} 

\begin{lemma}\label{lem: Np w/new nov mult}
Let $(N, \cdot, \circ)$ be a $\delta$-Novikov--Poisson algebra. For any fixed element $h \in N$, we define a new operation "$\times$" on $N$ by
\[
x \times y = x \circ y + h  x   y. 
\]
Then $(N, \cdot, \times)$ forms a $\delta$-Novikov--Poisson algebra.
\end{lemma}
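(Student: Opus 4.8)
The plan is to verify directly that $(N,\cdot,\times)$ satisfies every defining identity of Definition \ref{def: delta-nov-pois}. Throughout I write juxtaposition for the commutative associative product $\cdot$, and I record at the outset the one auxiliary relation that does most of the work: from \eqref{id: NP ass} and commutativity of $\cdot$ one gets
\[
(x\circ y)\,z \;=\; z\,(x\circ y) \;=\; (zx)\circ y \;=\; (xz)\circ y,
\]
which I will call $(\star)$; in words, a $\cdot$-factor sitting \emph{outside} a $\circ$-product on the left is absorbed into the left $\circ$-argument. Since $(N,\cdot)$ is untouched, commutativity and associativity of $\cdot$ are free, so only the two $\times$-axioms and the two compatibility identities \eqref{id: NP ass}, \eqref{id: NP leftsym} need checking. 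The recurring mechanism is that $x\times y=x\circ y+hxy$ splits every expression into a pure-$\circ$ part, a pure-$\cdot$ part carrying $h^2$, and mixed parts carrying a single $h$; the pure-$\circ$ parts are governed by the $\delta$-Novikov axioms for $\circ$, the $h^2$ parts by commutativity of $\cdot$, and the mixed parts are where \eqref{id: NP ass}, \eqref{id: NP leftsym} and $(\star)$ enter.

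For the two compatibility identities the computation is short. Expanding $(xy)\times z$ and $x(y\times z)$ and using \eqref{id: NP ass} together with associativity of $\cdot$, the mixed and $h^2$ terms agree termwise and the identity reduces to \eqref{id: NP ass} for $\circ$. Likewise, expanding $\delta(x\times y)z-x\times(yz)$ and symmetrizing in $x,y$, all $h$-terms cancel by commutativity of $\cdot$ and the identity reduces to \eqref{id: NP leftsym} for $\circ$. I expect no difficulty here.

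The substance is showing $(N,\times)$ is $\delta$-Novikov. Expanding once,
\[
(x\times y)\times z=(x\circ y)\circ z+hx(y\circ z)+h(x\circ y)z+h^2xyz,
\]
and comparing with the $y\leftrightarrow z$ swap, right commutativity \eqref{id: rcom} for $\times$ reduces to the vanishing of $E:=x(y\circ z)-x(z\circ y)+(x\circ y)z-(x\circ z)y$, the pure-$\circ$ terms cancelling by \eqref{id: rcom} for $\circ$ and the $h^2$ terms by commutativity. Now \eqref{id: NP ass} gives $x(y\circ z)=(xy)\circ z$ and $x(z\circ y)=(xz)\circ y$, while $(\star)$ gives $(x\circ y)z=(xz)\circ y$ and $(x\circ z)y=(xy)\circ z$; substituting these makes $E$ collapse to $0$ term by term.

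The genuine obstacle is the $\delta$-left-symmetry of $\times$. Setting $L(x,y,z):=\delta(x\times y)\times z-x\times(y\times z)$, one must prove $L(x,y,z)=L(y,x,z)$. After expanding, the pure-$\circ$ contribution $\delta(x\circ y)\circ z-x\circ(y\circ z)$ antisymmetrizes to $0$ by \eqref{id: del-lsym} for $\circ$; the $h^2$ terms and one family of single-$h$ terms cancel by commutativity and \eqref{id: NP ass} (indeed $x(y\circ z)-y(x\circ z)=(xy)\circ z-(yx)\circ z=0$). What remains in $L(x,y,z)-L(y,x,z)$ is
\[
\delta h\big[(x\circ y)-(y\circ x)\big]z-\big[x\circ(hyz)-y\circ(hxz)\big].
\]
The awkward term is $x\circ(hyz)$: here the $\cdot$-product lies \emph{inside the right} $\circ$-argument, which neither \eqref{id: NP ass} nor $(\star)$ can touch. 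The resolution is to read \eqref{id: NP leftsym} in its antisymmetric form $\delta\big[(x\circ y)-(y\circ x)\big]c=x\circ(yc)-y\circ(xc)$ and specialize $c=hz$; since $\big[(x\circ y)-(y\circ x)\big](hz)=h\big[(x\circ y)-(y\circ x)\big]z$ and $x\circ(y\cdot hz)=x\circ(hyz)$, this identifies the two bracketed quantities above exactly, so the remainder is $0$. Hence $L$ is symmetric in its first two arguments and the verification is complete. I anticipate this last step---recognizing that \eqref{id: NP leftsym} evaluated at $c=hz$ is precisely the needed relation between $x\circ(hyz)$ and the commutator-type $\cdot$-term---to be the crux of the argument.
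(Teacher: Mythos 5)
Your proof is correct and takes essentially the same route as the paper's: a direct term-by-term verification in which the only substantive step is applying the compatibility identity \eqref{id: NP leftsym} with third argument $hz$ to relate $x\circ(hyz)-y\circ(hxz)$ to $\delta h\big[(x\circ y)-(y\circ x)\big]z$, exactly the grouping the paper uses. Your decomposition into pure-$\circ$, single-$h$, and $h^2$ parts (with $(\star)$ standing in for \eqref{id: NP ass} plus commutativity) is just a bookkeeping variant of the paper's chain of equalities.
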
 
\begin{proof}
Let us first prove that $(N, \times)$ forms a $\delta$-Novikov algebra. For $x, y, z \in N$, by \eqref{id: rcom} and \eqref{id: NP  ass} we have
\begin{longtable}{lcl}
$(x \times y) \times z$& 
$=$&$ (x \circ y + h  x  y) \times z 
= (x \circ y) \circ z + (h  x  y) \circ z + h  (x \circ y)  z + h  (h  x  y)  z$ \\
&$=$&$ (x \circ z) \circ y + (h  y  x) \circ z + (h  z)  (x \circ y) + h  h  x  y  z$\\ 
&$=$&$ (x \circ z) \circ y + h  (x \circ z)  y + (h  z  x) \circ y + h  h  x  y  z$\\
&$=$&$ (x \times z) \times y.$
\end{longtable}
Now, by \eqref{id: del-lsym}
\begin{longtable}{lcl}
\multicolumn{3}{l}{$\delta (x \times y) \times z - x \times (y \times z)
\ =\ \delta (x \circ y + h  x  y) \times z - x \times (y \circ z + h  y  z)$}\\
&$  =$&$ \delta \left( (x \circ y) \circ z + (h  x  y) \circ z + h  (x \circ y)  z + h  (h  x  y)  z \right)$\\
\multicolumn{3}{r}{$- x \circ (y \circ z) -  x \circ (h  y  z) - h  x  (y \circ z) - h  x  (h  y  z)$}\\
&$=$&$\delta  (x \circ y) \circ z - x \circ (y \circ z)  + \delta (x \circ y)  (h  z) - x\circ ( y  (h  z) )
  +(\delta-1) ((h  x)  y) \circ z+(\delta-1)h  x  (h  y  z)$\\
&$=$&$\delta  (y \circ x) \circ z - y \circ (x \circ z)  + \delta (y \circ x)  (h  z) - y\circ (x  (h  z) )
  +(\delta-1) ((h  x)  y) \circ z+(\delta-1)h  x  (h  y  z)$\\
&$=$&$ (y \times x) \times z - y \times (x \times z).$
\end{longtable}
Thus, $(N, \times)$ is a $\delta$-Novikov algebra.

Next, we prove the compatibility conditions for operations ``$\cdot$'' and  ``$\times$''. For $x, y, z \in N$, we have
\begin{longtable}{lclclcl}
$(x  y) \times z $&$=$&$ (x  y) \circ z + h  (x  y)  z$
&$=$&$ x  (y \circ z) + x  (h  y  z)
$\\ &$=$&$ x  (y \circ z + h  y  z) $&$=$&$ x  (y \times z).$ 
\end{longtable}
 Moreover, 
\begin{longtable}{lcl}
$\delta (x \times y)  z - x \times (y  z)$&$= 
$&$\delta (x \circ y + h  x  y)  z - x \circ (y  z) - h  x  (y  z)$\\
&$=$&$ \delta (x \circ y)  z - x \circ (y  z)+(\delta-1)h  x  y  z$\\
&$=$&$ \delta (y \circ x)  z - y \circ (x  z)+(\delta-1)h  x  y  z$\\
&$=$&$ \delta (y \circ x)  z +\delta h  x  y  z - y \circ (x  z)-h  y  x \cdot z$\\
&$= $&$ \delta(y \times x)  z - y \times (x  z).$ 
\end{longtable}
This completes the proof.
\end{proof}

\begin{theorem} \label{dnpex}
Let $(N, \cdot, \circ)$ be a $\delta$-Novikov–Poisson algebra. For any fixed elements $q, h \in N$, we define two new operations “$\cdot_{q}$” and “$\times_{h}$” on $N$ by
\begin{center}
$x \cdot_{q} y \ =\  q  x  y$ \ and \ $ x \times_{h } y \ =\  x \circ y + h   x  y.$  
\end{center}
Then $(N, \cdot_{q}, \times_{h })$ forms a $\delta$-Novikov–Poisson algebra.
\end{theorem}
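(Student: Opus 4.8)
The plan is to split the single statement into two independent deformations and handle them separately, reusing the work already done. First, observe that the operation $\times_h$ is exactly the operation ``$\times$'' of Lemma~\ref{lem: Np w/new nov mult} attached to the fixed element $h$, so that lemma already guarantees that $(N,\cdot,\times_h)$ is a $\delta$-Novikov--Poisson algebra. Consequently it suffices to prove the following self-contained claim: if $(N,\bullet,\star)$ is \emph{any} $\delta$-Novikov--Poisson algebra and $q\in N$ is fixed, then $(N,\bullet_q,\star)$ is again a $\delta$-Novikov--Poisson algebra, where $x\bullet_q y=q\bullet x\bullet y$. Applying this claim to the algebra $(N,\cdot,\times_h)$ produced by Lemma~\ref{lem: Np w/new nov mult} then yields the theorem, since $\cdot_q$ is precisely $\cdot$ deformed by $q$ while $\times_h$ is left untouched.

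For the claim, three of the four axioms are routine. Commutativity of $\bullet_q$ is immediate, and associativity follows because $(x\bullet_q y)\bullet_q z$ and $x\bullet_q(y\bullet_q z)$ both collapse to $q\bullet q\bullet x\bullet y\bullet z$ by the commutativity and associativity of $\bullet$. The product $\star$ is unchanged, so $(N,\star)$ remains a $\delta$-Novikov algebra. For the associativity-type compatibility \eqref{id: NP ass} I would write $x\bullet_q y=q\bullet(x\bullet y)$ and apply the original identity \eqref{id: NP ass} twice, giving $(x\bullet_q y)\star z=q\bullet\bigl((x\bullet y)\star z\bigr)=q\bullet x\bullet(y\star z)=x\bullet_q(y\star z)$.

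The one genuinely delicate point is the left-symmetry compatibility \eqref{id: NP leftsym}, and this is where the key trick enters. The crucial observation is that $\delta(x\star y)\bullet_q z-x\star(y\bullet_q z)$ is nothing but the original left-symmetry expression $\delta(x\star y)\bullet w-x\star(y\bullet w)$ evaluated at the element $w=q\bullet z$; indeed $(x\star y)\bullet_q z=(x\star y)\bullet(q\bullet z)$ and $y\bullet_q z=y\bullet(q\bullet z)$. Since the original identity \eqref{id: NP leftsym} holds for every value of its third argument, it holds in particular at $w=q\bullet z$, so the expression is symmetric under $x\leftrightarrow y$, which is exactly \eqref{id: NP leftsym} for $(N,\bullet_q,\star)$. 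I expect this substitution step to be the main obstacle: a naive term-by-term expansion of \eqref{id: NP leftsym} (for instance in the $\delta$-derivation model of Example~\ref{exnov}) appears to leave a nonzero remainder proportional to $\delta(1-\delta)$, and only by treating $q\bullet z$ as a single element and invoking the identity as a whole---rather than distributing $q$ through the products---does the cancellation become transparent. Assembling the four verified axioms completes the proof of the claim and hence of the theorem.
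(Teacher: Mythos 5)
Your proof is correct and takes essentially the same approach as the paper: the paper's own one-line proof likewise invokes Lemma~\ref{lem: Np w/new nov mult} to handle $\times_{h}$ and then appeals to the commutativity and associativity of $\cdot$ to absorb the deformation $\cdot_{q}$. Your write-up simply supplies the details the paper leaves to the reader, in particular the key substitution of $q\cdot z$ as the third argument in \eqref{id: NP leftsym}, which is exactly what the paper's appeal to commutative associativity amounts to.
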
 \begin{proof}
    The proof follows from Lemma \ref{lem: Np w/new nov mult} and commutative associative property of the product “$\cdot$”.
\end{proof}

\begin{corollary}\label{dnp_c}
Let $\mathcal{A}$ be a commutative associative algebra and let $\varphi$ be a $\delta$-derivation of $\mathcal{A}$. For fixed elements $q, h  \in \mathcal{A}$, we define the operations “$\cdot_{q}$” and “$\times_{h }$” on $\mathcal{A}$ by
\[
x \cdot_q y = q  x  y, \quad x \times_{h } y = x  \varphi(y) + h   x  y.
\]
Then $(\mathcal{A}, \cdot_{q},\times_{h })$ forms a $\delta$-Novikov--Poisson algebra.
\end{corollary}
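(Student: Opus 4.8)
The plan is to obtain this statement as a direct specialization of Theorem \ref{dnpex}, applied to the canonical $\delta$-Novikov--Poisson algebra attached to $\mathcal{A}$ and $\varphi$. The key observation is that the two deformed products appearing in the corollary are precisely the products produced by Theorem \ref{dnpex}, provided one starts from the correct base structure. So the entire content is carried by the two preceding results, and nothing new has to be computed.

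First I would invoke the basic construction recorded in the example immediately following Definition \ref{def: delta-nov-pois}: since $(\mathcal{A},\cdot)$ is commutative associative and $\varphi$ is a $\delta$-derivation of $\mathcal{A}$, the triple $(\mathcal{A}, \cdot, \circ)$ with $x \circ y = x\varphi(y)$ (the multiplication \eqref{mult nov in com}) is already a $\delta$-Novikov--Poisson algebra. Here $\cdot$ denotes the original associative product of $\mathcal{A}$, written as juxtaposition. This furnishes exactly the input datum $(N, \cdot, \circ)$ required to apply Theorem \ref{dnpex}.

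Next I would apply Theorem \ref{dnpex} to this $(\mathcal{A}, \cdot, \circ)$ with the prescribed elements $q, h \in \mathcal{A}$. By that theorem, the operations $x \cdot_q y = q x y$ and $x \times_h y = x \circ y + h x y$ again define a $\delta$-Novikov--Poisson algebra on $\mathcal{A}$. It then only remains to reconcile the notation: $\cdot_q$ coincides with the corollary's $\cdot_q$ because the underlying associative product is juxtaposition, and substituting $x \circ y = x\varphi(y)$ into $x \times_h y = x \circ y + h x y$ yields precisely $x \times_h y = x\varphi(y) + h x y$, the operation named in the statement. Hence $(\mathcal{A}, \cdot_q, \times_h)$ is a $\delta$-Novikov--Poisson algebra, as claimed.

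There is essentially no obstacle in this argument, which is why the corollary can be stated as an immediate consequence. The single point worth verifying explicitly is that the symbol ``$\circ$'' appearing in Theorem \ref{dnpex} is instantiated by the Novikov product $x\varphi(y)$ of the base example, and not by any other operation, so that the combined deformation produces exactly $x\varphi(y)+h x y$ rather than some variant; once this identification is made, the result follows.
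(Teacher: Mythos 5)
Your proposal is correct and follows exactly the route the paper intends: the corollary is the specialization of Theorem \ref{dnpex} to the $\delta$-Novikov--Poisson algebra $(\mathcal{A},\cdot,\circ)$ with $x\circ y = x\varphi(y)$ furnished by the example after Definition \ref{def: delta-nov-pois}. Your explicit check that $\times_h$ instantiates to $x\varphi(y)+hxy$ is the only reconciliation needed, and it matches the paper's (unstated) reasoning.
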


The Kantor product of two multiplications defined on the same vector space gives a way how to construct a new multiplication satisfying some interesting properties. 
So, the Kantor product of two multiplications of a Novikov--Poisson algebra gives a Novikov algebra \cite{FK22}. As we will see, a similar property can be shared to $\delta$-Novikov--Poisson algebras.

\begin{definition}[see, \cite{FK22}]
     Let  $u$ be a fixed vector of the $n$-dimensional vector space $V_n$ and     $A,B$ be two bilinear multiplications defined on $V_n.$ Then $\ast$ is a new multiplication called the Kantor product of $A$ and $B$ if \begin{equation*}
  x\ast y= A(u,B(x,y))-B(A(u,x),y)-B(x,A(u,y)).
\end{equation*}
\end{definition}

\begin{theorem}\label{KP}
    Let $(N,\cdot,\circ)$  be a $\delta$-Novikov--Poisson algebra and 
     $\ast$ is the Kantor product of $\cdot$ and $\circ$.
    Then  $(N,\ast)$  is a $\delta$-Novikov algebra.
    \end{theorem}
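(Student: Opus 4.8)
The plan is to first collapse the Kantor product to a much simpler form, and only then check the two defining identities of a $\delta$-Novikov algebra. Writing out the Kantor product with $A=\cdot$ and $B=\circ$ gives $x\ast y=u\cdot(x\circ y)-(u\cdot x)\circ y-x\circ(u\cdot y)$. The compatibility \eqref{id: NP ass} reads $(u\cdot x)\circ y=u\cdot(x\circ y)$, so the first two terms cancel and the product collapses to $x\ast y=-\,x\circ(u\cdot y)$. Since both defining identities of a $\delta$-Novikov algebra are homogeneous of degree two in the product, the global sign is irrelevant, and it suffices to prove that the operation $x\circ' y:=x\circ(u\cdot y)$ is a $\delta$-Novikov multiplication.

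Right commutativity of $\circ'$ is immediate from \eqref{id: rcom} for $\circ$:
\[
(x\circ' y)\circ' z=(x\circ(u\cdot y))\circ(u\cdot z)=(x\circ(u\cdot z))\circ(u\cdot y)=(x\circ' z)\circ' y.
\]
The real work is the $\delta$-left-symmetry \eqref{id: del-lsym} for $\circ'$. First I would rewrite its $\delta$-associator in terms of $\circ$: applying \eqref{id: NP ass} to $u\cdot(y\circ(u\cdot z))=(u\cdot y)\circ(u\cdot z)$ turns $x\circ'(y\circ' z)$ into $x\circ((u\cdot y)\circ(u\cdot z))$, whence $(x,y,z)^{\circ'}_\delta=(x,\,u\cdot y,\,u\cdot z)^{\circ}_\delta$. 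Thus the claim reduces to the symmetry $(x,\,u\cdot y,\,u\cdot z)^{\circ}_\delta=(y,\,u\cdot x,\,u\cdot z)^{\circ}_\delta$. The key auxiliary fact is that, again by \eqref{id: NP ass} used three times, multiplication by $u$ can be pulled out of the first slot of the $\delta$-associator, that is $(u\cdot a,b,c)^{\circ}_\delta=u\cdot(a,b,c)^{\circ}_\delta$. Combining this with the $\delta$-left-symmetry of $\circ$ itself (the symmetry of $(a,b,c)^{\circ}_\delta$ in its first two arguments) yields the chain
\[
(x,u\cdot y,u\cdot z)^{\circ}_\delta=(u\cdot y,x,u\cdot z)^{\circ}_\delta=u\cdot(y,x,u\cdot z)^{\circ}_\delta=u\cdot(x,y,u\cdot z)^{\circ}_\delta=(u\cdot x,y,u\cdot z)^{\circ}_\delta=(y,u\cdot x,u\cdot z)^{\circ}_\delta,
\]
which is exactly what is needed.

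The step I expect to be the main obstacle is this $\delta$-left-symmetry, and in particular recognizing that it should be organized through the pull-out identity $(u\cdot a,b,c)^{\circ}_\delta=u\cdot(a,b,c)^{\circ}_\delta$ rather than attacked by brute force; without this observation the verification degenerates into a long and error-prone expansion. I note that this route uses only \eqref{id: rcom}, \eqref{id: del-lsym} and the first compatibility \eqref{id: NP ass}, while the second compatibility \eqref{id: NP leftsym} is not needed. A more pedestrian alternative would skip the initial simplification and expand $\delta(x\ast y)\ast z-x\ast(y\ast z)$ in full; in that case the relation obtained from \eqref{id: NP leftsym} at $z=u$, namely $x\circ(u\cdot y)-y\circ(u\cdot x)=\delta\,u\cdot(x\circ y-y\circ x)$, would be the tool needed to reconcile the mismatched first arguments, but the collapsed form above is far shorter.
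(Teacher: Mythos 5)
Your proof is correct and follows essentially the same route as the paper's: both collapse the Kantor product to $x\ast y=-x\circ(u\cdot y)$ via \eqref{id: NP ass}, verify right commutativity directly, and establish $\delta$-left-symmetry by reducing to $(x,u\cdot y,u\cdot z)^{\circ}_{\delta}$ and pulling the factor $u$ out of the first slot of the $\delta$-associator. The only cosmetic differences are that you discard the sign at the outset by a homogeneity argument (the paper tracks it explicitly) and you isolate the pull-out identity as a named lemma, whereas the paper performs the same manipulation inline.
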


\begin{proof}
    Firstly, we have
\begin{longtable}{lclcl}
$x \ast y $&$=$&$ u (x \circ y)-(ux) \circ y-x \circ (uy) $&$=$&$ -x\circ (uy).$
\end{longtable}
\noindent Hence,
\begin{longtable}{lclclcl}
$ (x\ast y) \ast z $&$= $&$
(x\circ (uy)) \circ (uz)$&$=$&$ (x\circ (uz)) \circ (uy)= (x \ast z) \ast y,$
\end{longtable}
\noindent and
\begin{longtable}{lclcl}
$(x, y, z)_{\delta}^{\ast}$&$=$&
$\delta (x  \ast y) \ast z - x \ast (y \ast z)$ \\&$=$& 
$\delta (x \circ (uy)) \circ (uz) - x \circ (u(y\circ (uz)))$\\ &$=$&$
\delta (x \circ (uy)) \circ (uz) - x \circ ((uy)\circ (uz)))$ &$=$&$
(x, uy, uz)_{\delta}^{\circ}$\\
&&&$=$&$
(uy, x, uz)_{\delta}^{\circ}$\\
&$=$&$
\delta ((uy)  \circ x ) \circ (uz) - (uy) \circ (x\circ (uz)))$\\
&$=$&
$u(\delta (y  \circ x ) \circ (uz) - y \circ (x\circ (uz)))$ &$=$&$
u(y, x, uz)_{\delta}^{ \circ}$\\
&&&$=$&$
u(x, y, uz)_{\delta}^{\circ}.$\\
\end{longtable}   
The last observation gives
$(x, y, z)_{\delta}^{ \ast} = (y,x,  z)_{\delta}^{ \ast}$ and the theorem is proved.
\end{proof}

\begin{corollary}
    Let $\mathcal A$ be a commutative associative algebra  with a nonzero $\delta$-derivation $\varphi$ and a fixed element $u \in \mathcal A.$ Define a new multiplication $\ast$ as 
    \begin{center}
    $x \ast y = x \varphi(uy).$
    \end{center}
    Then $(\mathcal A, \ast)$ is a $\delta$-Novikov algebra.

\end{corollary}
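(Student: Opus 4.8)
The plan is to realize $(\mathcal A,\ast)$ as a rescaling of the Kantor product of the two operations of a suitable $\delta$-Novikov--Poisson algebra, so that the conclusion reduces to Theorem~\ref{KP}. First I would observe that the triple $(\mathcal A,\cdot,\circ)$, where $\cdot$ is the given commutative associative product and $x\circ y=x\varphi(y)$ as in \eqref{mult nov in com}, is itself a $\delta$-Novikov--Poisson algebra: this is precisely the content of the example following Definition~\ref{def: delta-nov-pois} (equivalently, $(\mathcal A,\circ)=\mathcal A_\varphi$ is $\delta$-Novikov by Example~\ref{exnov}, while the compatibility identities \eqref{id: NP ass} and \eqref{id: NP leftsym} hold because $\varphi$ is a $\delta$-derivation).

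Next I would form the Kantor product $\star$ of $\cdot$ and $\circ$ with respect to the fixed vector $u$ and evaluate it explicitly. Following the opening computation in the proof of Theorem~\ref{KP}, and using that $u,x,y$ all live in the commutative associative algebra $(\mathcal A,\cdot)$, one obtains
\[
x\star y=u(x\circ y)-(ux)\circ y-x\circ(uy)=ux\varphi(y)-ux\varphi(y)-x\varphi(uy)=-x\varphi(uy).
\]
By Theorem~\ref{KP}, $(\mathcal A,\star)$ is a $\delta$-Novikov algebra. Thus the operation in the statement satisfies $x\ast y=x\varphi(uy)=-(x\star y)$, that is, $\ast=-\star$.

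Finally I would dispose of the sign. The defining identities \eqref{id: del-lsym} and \eqref{id: rcom} are homogeneous of degree two in the multiplication: replacing a product $m$ by $cm$ multiplies every triple-product monomial by $c^2$, so both identities are preserved for any nonzero scalar $c$, in particular for $c=-1$. Hence $(\mathcal A,\ast)=(\mathcal A,-\star)$ is again a $\delta$-Novikov algebra, which is the assertion.

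The only genuinely delicate point is this bookkeeping sign: the Kantor product yields $-x\varphi(uy)$ rather than the $x\varphi(uy)$ appearing in the statement, so the argument would be incomplete without the remark that rescaling a $\delta$-Novikov multiplication by a nonzero scalar preserves the variety. Everything else is a direct substitution into Theorem~\ref{KP}.
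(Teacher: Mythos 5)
Your proof is correct and follows the paper's intended route: the corollary sits immediately after Theorem~\ref{KP} precisely because the Kantor product of $\cdot$ and $\circ$ (with $x\circ y=x\varphi(y)$) equals $-x\varphi(uy)$, as already computed in that theorem's proof. Your explicit treatment of the sign---via degree-two homogeneity of \eqref{id: del-lsym} and \eqref{id: rcom} (one could equivalently just apply Theorem~\ref{KP} with the fixed vector $-u$ in place of $u$)---fills in a detail the paper leaves implicit.
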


An important property of Poisson, transposed Poisson, anti-pre-Lie Poisson, and Novikov--Poisson algebras is that they are closed under taking tensor
products (see \cite{Xu96,TP1,LB})\footnote{On the other hand, 
$F$-manifold algebras and contact bracket algebras, in general,  do not have this property \cite{LSB, zus}.}. We generalize this property for $\delta$-Novikov--Poisson  algebras.

\begin{theorem}\label{tens}
    Let $(N_1,\cdot_1,\circ_1)$ and $(N_2,\cdot_2, \circ_2)$ be two $\delta$-Novikov--Poisson  algebras. Define operations on $N_1 \otimes N_2$ as follows 
        \begin{equation}\label{mult ten: com}
            (x_1\otimes x_2)\cdot (y_1\otimes y_2)\ =\ (x_1 \cdot_1 y_1)\otimes (x_2 \cdot_2 y_2),\end{equation}
        \begin{equation}\label{mult ten: nov}
        (x_1\otimes x_2)\circ (y_1\otimes y_2)\ =\ 
        (x_{1}\circ_1 y_{1})\otimes (x_{2}\cdot_2 y_{2})+ (x_{1}\cdot_1 y_{1})\otimes (x_{2}\circ_2 y_{2}),\end{equation} where  $x_1,y_1 \in N_1$ and $x_2,y_2\in N_2.$
        Then $(N_1 \otimes N_2, \cdot, \circ)$ is a $\delta$-Novikov--Poisson algebra.
\end{theorem}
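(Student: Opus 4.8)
The plan is to verify, one at a time, the three requirements of Definition \ref{def: delta-nov-pois}: that $(N_1\otimes N_2,\cdot)$ is commutative associative, that $(N_1\otimes N_2,\circ)$ is $\delta$-Novikov, and that the compatibility identities \eqref{id: NP ass} and \eqref{id: NP leftsym} hold. By bilinearity it suffices to test all identities on decomposable tensors $X=x_1\otimes x_2$, $Y=y_1\otimes y_2$, $Z=z_1\otimes z_2$. The commutative associative part is immediate, since \eqref{mult ten: com} is just the usual tensor product of the algebras $(N_i,\cdot_i)$. Before touching $\circ$ I would record one auxiliary identity valid in \emph{every} $\delta$-Novikov--Poisson algebra, namely
\[
(a\circ b)\cdot c \ = \ (a\cdot c)\circ b \ = \ a\cdot(c\circ b),
\]
which follows purely from \eqref{id: NP ass} and commutativity of $\cdot$: indeed $(a\circ b)\cdot c=c\cdot(a\circ b)=(c\cdot a)\circ b=(a\cdot c)\circ b$, and likewise $a\cdot(c\circ b)=(a\cdot c)\circ b$. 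This is the small observation that lets every ``mixed'' factor below be pushed into a common normal form, and the whole computation hinges on it.

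For right commutativity \eqref{id: rcom} of the tensor operation $\circ$ defined by \eqref{mult ten: nov}, I would expand both $(X\circ Y)\circ Z$ and $(X\circ Z)\circ Y$; each is a sum of four tensors, indexed by which legs the two applications of $\circ$ land in. The two ``extreme'' tensors (both $\circ$'s in the first leg, or both in the second) match directly using \eqref{id: rcom} in $N_1$ and in $N_2$ together with commutativity and associativity of $\cdot_1,\cdot_2$. The two remaining ``mixed'' tensors on one side coincide with the two on the other once every factor is normalised by \eqref{id: NP ass} and the auxiliary identity into the shape $(\text{product})\circ(\text{single})$; concretely, the term carrying $\circ_1$ in the first leg and $\circ_2$ in the second pairs with its $y\leftrightarrow z$ partner from the other side.

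The delicate step is $\delta$-left-symmetry \eqref{id: del-lsym} of $\circ$, i.e. $(X,Y,Z)^{\circ}_{\delta}=(Y,X,Z)^{\circ}_{\delta}$. I would expand $\delta(X\circ Y)\circ Z-X\circ(Y\circ Z)$ into eight tensors and sort them by bidegree, that is, by how many $\circ_1$'s sit in the first leg versus how many $\circ_2$'s in the second. The $(2,0)$ part collapses to $(x_1,y_1,z_1)^{\circ_1}_{\delta}\otimes(x_2\cdot_2 y_2\cdot_2 z_2)$ and the $(0,2)$ part to $(x_1\cdot_1 y_1\cdot_1 z_1)\otimes(x_2,y_2,z_2)^{\circ_2}_{\delta}$, and after the swap $x\leftrightarrow y$ these are matched precisely by \eqref{id: del-lsym} in $N_1$ and in $N_2$. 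The $(1,1)$ part is where the compatibility condition enters: after normalising every leg with \eqref{id: NP ass} and the auxiliary identity, the difference of the two sides collapses to a sum of the form $\Xi_1\otimes P+p\otimes\Xi_2$, in which $p$ and $P$ are fixed normal-form monomials while $\Xi_1$ and $\Xi_2$ are exactly the two-sided differences occurring in \eqref{id: NP leftsym} for $N_1$ and $N_2$ respectively, and hence both vanish. I expect this bookkeeping — keeping the mixed $(1,1)$ terms correctly paired and recognising the leftover as an instance of \eqref{id: NP leftsym} — to be the main obstacle.

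Finally, for the two compatibility conditions on $(N_1\otimes N_2,\cdot,\circ)$ I would argue directly. Identity \eqref{id: NP ass}, $(X\cdot Y)\circ Z=X\cdot(Y\circ Z)$, splits into two tensors matching term-by-term via \eqref{id: NP ass} in each $N_i$. For \eqref{id: NP leftsym}, expanding $\delta(X\circ Y)\cdot Z-X\circ(Y\cdot Z)$ factors as $\big[\delta(x_1\circ_1 y_1)\cdot_1 z_1-x_1\circ_1(y_1\cdot_1 z_1)\big]\otimes(x_2\cdot_2 y_2\cdot_2 z_2)+(x_1\cdot_1 y_1\cdot_1 z_1)\otimes\big[\delta(x_2\circ_2 y_2)\cdot_2 z_2-x_2\circ_2(y_2\cdot_2 z_2)\big]$, and the $x\leftrightarrow y$ swap turns each bracket into the other side of \eqref{id: NP leftsym} in the respective factor. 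Assembling the four verifications yields the theorem.
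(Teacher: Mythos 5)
Your proposal is correct and takes essentially the same route as the paper's proof: direct verification on decomposable tensors, with the $(2,0)$/$(0,2)$ parts of the $\delta$-associator matched via \eqref{id: del-lsym} in each factor, and the mixed $(1,1)$ parts normalised using \eqref{id: NP ass} and grouped into exactly the form $\Xi_1\otimes P+p\otimes\Xi_2$ so that \eqref{id: NP leftsym} applies. The only difference is presentational: you isolate the normalisation identity $(a\circ b)\cdot c=(a\cdot c)\circ b=a\cdot(c\circ b)$ as an explicit auxiliary lemma, whereas the paper uses it inline during the computations.
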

\begin{proof} To simplify notation, the indices 1 and 2 in the operations 
$\cdot$ and  $\circ$ will be omitted, as their intended meaning will be clear from the context. 

It is clear that $(N_1 \otimes N_2, \cdot)$ is a commutative associative algebra.   First, we prove that $(N_1 \otimes N_2,  \circ)$ is a $\delta$-Novikov algebra.
    Let us consider  $x = x_1 \otimes x_2$, $y = y_1 \otimes y_2$, and $z = z_1 \otimes z_2$ where $x_1, y_1, z_1 \in N_1$ and $x_2, y_2, z_2 \in N_2,$ then by \eqref{mult ten: com} and \eqref{mult ten: nov} we have
   
\begin{longtable}{lclcl}
$\big(x\circ y  \big)\circ z$ & $ =$&$\big((x_{1}\circ y_{1})\otimes (x_{2}\cdot y_{2})+(x_{1}\cdot y_{1})\otimes (x_{2}\circ y_{2}) \big)\circ(z_{1}\otimes z_{2})$ \\
& $=$&$ \big( (x_{1}\circ y_{1})\circ z_{1} \big) \otimes  \big( x_{2}\cdot y_{2}\cdot z_{2} \big)+
\big((x_{1}\circ y_{1})\cdot z_{1} \big) \otimes \big( (x_{2}\cdot y_{2})\circ z_{2} \big)$ \\
& &$ \quad + \big((x_{1}\cdot y_{1})\circ z_{1} \big) \otimes \big((x_{2}\circ y_{2})\cdot z_{2}\big)+
\big(x_{1}\cdot y_{1}\cdot z_{1} \big)\otimes \big((x_{2}\circ y_{2})\circ z_{2} \big)$\\
& $=$&$ \big( (x_{1}\circ z_{1})\circ y_{1} \big) \otimes \big( x_{2}\cdot z_{2}\cdot y_{2} \big) + \big((x_{1}\cdot z_{1})\circ y_{1} \big) \otimes \big( (x_{2}\circ z_{2})\cdot y_{2} \big)$\\
&&$\quad  +\big((x_{1}\circ z_{1})\cdot y_{1} \big)\otimes \big((x_{2}\cdot z_{2})\circ y_{2}\big)+ 
\big((x_{1}\cdot z_{1})\cdot y_{1} \big)\otimes \big((x_{2}\circ z_{2})\circ y_{2} \big)$ \\ 
& $=$&$ \big( x\circ z  \big)\circ y.$
\end{longtable} 
Moreover, 

\begin{longtable}{lcl}
$ x \circ \big(y \circ z  \big) $& $=$&$ \big(x_{1}\circ (y_{1}\circ z_{1}) \big) \otimes \big(x_{2}\cdot y_{2}\cdot z_{2} \big)+ \big(x_{1}\cdot(y_{1}\circ z_{1})\big)\otimes \big( x_{2}\circ(y_{2}\cdot z_{2})\big)$ \\
&&$ + \big(x_{1}\circ(y_{1}\cdot z_{1}) \big)\otimes  \big(x_{2}\cdot(y_{2}\circ z_{2}) \big)+ \big(x_{1}\cdot y_{1}\cdot z_{1}\big)\otimes \big( x_{2}\circ(y_{2}\circ z_{2})\big).$
\end{longtable}
Therefore, the identity \eqref{id: rcom} holds. Now,
by \eqref{mult ten: com} and \eqref{mult ten: nov} we have
 
\begin{longtable}{lclcl}
 $\big(x, \ y, \ z  \big)_{\delta}^\circ$
& $=$ &$(x_{1},y_{1},z_{1})_{\delta}\otimes \big(x_{2}\cdot y_{2}\cdot z_{2}\big)+
\big(x_{1}\cdot y_{1}\cdot z_{1}\big)\otimes(x_{2},y_{2},z_{2})^\circ_{\delta}$ \\
&&\multicolumn{1}{c}{$+\big(x_{1}\cdot(y_{1}\circ z_{1})\big)\otimes\big(\delta (x_{2}\circ y_{2})\cdot z_{2}-x_{2}\circ(y_{2}\cdot z_{2})\big)$} \\
\multicolumn{3}{r}{  $+\big(\delta (x_{1}\circ y_{1})\cdot z_{1}-x_{1}\circ(y_{1}\cdot z_{1})\big)\otimes \big(x_{2}\cdot(y_{2}\circ z_{2}) \big)$} \\
& $=$ &$(y_{1},x_{1},z_{1})^\circ_{\delta}\otimes \big(y_{2}\cdot x_{2}\cdot z_{2}\big)+\big(y_{1}\cdot x_{1}\cdot z_{1}\big)\otimes(y_{2},x_{2},z_{2})^\circ_{\delta}$ \\
&&\multicolumn{1}{c}{$+ \big(y_1 \cdot (x_1 \circ z_1)\big) \otimes \big( \delta (y_2 \circ x_2) \cdot z_2 - y_2 \circ (x_2 \cdot z_2)\big)$} \\
\multicolumn{3}{r}{$+ \big(\delta(y_1 \circ x_1) \cdot z_1 - y_1 \cdot (x_1 \cdot z_1)\big) \otimes \big(y_2 \cdot (x_2 \cdot z_2)\big)$} & $=$&$ \big(y, \ x, \ z \big)^\circ_{\delta}.$
\end{longtable}

The condition \eqref{id: NP ass} is straightforward to verify.
Now, we prove compatibility condition \eqref{id: NP leftsym}. We have

\begin{longtable}{lcl}
  $\big(x \circ y \big) \cdot z$ &$=$&$ 
  \big((x_1 \circ y_1) \otimes (x_2 \cdot y_2) + (x_1 \cdot y_1) \otimes (x_2 \circ y_2)\big) \cdot (z_1 \otimes z_2)$ \\
&$=$&$   
\big( (x_1 \circ y_1) \cdot z_1\big) \otimes \big((x_2 \cdot y_2) \cdot z_2\big) + \big((x_1 \cdot y_1) \cdot z_1 \big)\otimes \big( (x_2 \circ y_2) \cdot z_2\big),$\\
 
$x \circ \big(y  \cdot z\big) $&$=$&$\big(x_1 \circ (y_1 \cdot z_1)\big) \otimes \big(x_2 \cdot (y_2 \cdot z_2) \big) + \big(x_1 \cdot (y_1 \cdot z_1)\big) \otimes \big(x_2 \circ (y_2 \cdot z_2)\big).$
\end{longtable}

Since $(N_1, \cdot, \circ)$ and $(N_2, \cdot, \circ)$ are both $\delta$-Novikov--Poisson algebras, by combining the results we have
\begin{longtable}{lcl}
$\delta \; (x\circ y)\cdot z-x\circ (y\cdot z)
$&$=$&$ \delta \Big( \big((x_1 \circ y_1) \cdot z_1\big) \otimes \big( (x_2 \cdot y_2) \cdot z_2\big) +
\big((x_1 \cdot y_1) \cdot z_1 \big)\otimes \big((x_2 \circ y_2) \cdot z_2\big) \Big)$ \\
&&$\quad - \Big( \big(x_1 \circ (y_1 \cdot z_1)\big) \otimes \big(x_2 \cdot (y_2 \cdot z_2)\big) + \big(x_1 \cdot (y_1 \cdot z_1)\big) \otimes \big(x_2 \circ (y_2 \cdot z_2)\big)\Big)$\\
&$=$&$ \delta \Big(\big((y_1 \circ x_1) \cdot z_1\big) \otimes \big((y_2 \cdot x_2) \cdot z_2\big) + \big((y_1 \cdot x_1) \cdot z_1\big) \otimes \big((y_2 \circ x_2) \cdot z_2\big)\Big)$ \\
&&$\quad - \Big(\big(y_1 \circ (x_1 \cdot z_1)\big) \otimes \big(y_2 \cdot (x_2 \cdot z_2)\big) + \big(y_1 \cdot (x_1 \cdot z_1)\big) \otimes \big(y_2 \circ (x_2 \cdot z_2)\big)\Big)$\\
&$=$&$\delta \; (y\circ x)\cdot z-y\circ (x\cdot z).$
    
\end{longtable}

 This completes the proof.

\end{proof}

\section{Commutator products of $\delta$-Novikov algebras}\label{prod}

In this section, we consider $\delta$-Novikov algebras under the commutator $[a, b].$ 
We obtain a generalization of the well-known result that when $\delta = 1$, Novikov algebras under the commutator product are Lie-admissible. Additionally, it has been proven that when $\delta = 0$, bicommutative algebras under the commutator product are metabelian Lie-admissible \cite{DzhIT}. We establish that when $\delta \neq 1$, every $\delta$-Novikov algebra is metabelian Lie-admissible.

First, we establish several lemmas that will be needed throughout the proof of the main result of this section.

\begin{lemma} Let $\delta\neq 1$ and $(N,\cdot)$ be a ${\delta}$-Novikov algebra over a field of characteristic zero,
    then the following identity holds true 
    \begin{equation}\label{id: strong rcom}
    (z(xy))t=(z(xt))y.    
    \end{equation}
   
\end{lemma}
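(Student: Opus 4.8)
The plan is to introduce the ``defect''
\[
F(z,x,y,t) := (z(xy))t - (z(xt))y
\]
and to prove $F\equiv 0$ by extracting two \emph{different} linear relations that express $F(z,x,y,t)$ in terms of $F(x,z,y,t)$, and then eliminating. Throughout I would use the two defining identities in convenient forms: right commutativity \eqref{id: rcom} in the shape $(ab)c=(ac)b$ (so that right multiplications commute), and the solved form of the $\delta$-left-symmetric identity \eqref{id: del-lsym}, namely
\[
a(bc)=\delta(ab)c-\delta(ba)c+b(ac),
\]
which I will call $(\star)$.

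First relation. I would apply \eqref{id: del-lsym} with $(a,b,c)=(z,\,xy,\,t)$ and with $(a,b,c)=(z,\,xt,\,y)$ and subtract. By \eqref{id: rcom} the terms $z((xy)t)$ and $z((xt)y)$ coincide, and likewise $((xy)z)t$ and $((xt)z)y$ coincide; using $(xy)(zt)=(x(zt))y$ and $(xt)(zy)=(x(zy))t$, both immediate from \eqref{id: rcom}, the surviving terms reassemble into $F(x,z,y,t)$. This produces the $\delta$-weighted relation $\delta\,F(z,x,y,t)=F(x,z,y,t)$.

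Second relation. Here I would instead expand the \emph{inner} products: rewrite $z(xy)$ and $z(xt)$ by $(\star)$ and then right-multiply by $t$ and by $y$ respectively. The $\delta$-proportional contributions are $\delta\big(((zx)y)t-((zx)t)y\big)$ and $-\delta\big(((xz)y)t-((xz)t)y\big)$, each of which vanishes identically by right commutativity; what remains is exactly $(x(zy))t-(x(zt))y=F(x,z,y,t)$. This gives the \emph{$\delta$-free} relation $F(z,x,y,t)=F(x,z,y,t)$. Combining the two relations yields $\delta\,F(z,x,y,t)=F(z,x,y,t)$, i.e. $(\delta-1)F(z,x,y,t)=0$, whence $F\equiv 0$ since $\delta\neq 1$.

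I expect the main obstacle to be isolating this second relation rather than the obvious one. The instinctive move after obtaining the first relation is to swap $z\leftrightarrow x$ in it, giving $\delta\,F(x,z,y,t)=F(z,x,y,t)$; composed with the first this only delivers $(\delta^2-1)F=0$, which is useless at precisely the anti-Novikov value $\delta=-1$. The key observation is that applying the left-symmetric identity to the inner bracket (rather than the outer one) produces a relation whose entire $\delta$-dependent part is annihilated by right commutativity, and it is the pairing of this $\delta$-free relation with the first that deletes the spurious factor and covers all $\delta\neq 1$ uniformly.
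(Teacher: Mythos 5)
Your proof is correct and is essentially the paper's own argument in a different packaging: the paper writes down a single vanishing combination of instances of \eqref{id: del-lsym} — two applied to the outer products, namely $(z,xy,t)_{\delta}-(xy,z,t)_{\delta}$ and $(z,xt,y)_{\delta}-(xt,z,y)_{\delta}$, plus the two right-multiplied instances $\big((x,z,y)_{\delta}-(z,x,y)_{\delta}\big)t$ and $\big((x,z,t)_{\delta}-(z,x,t)_{\delta}\big)y$ — and then expands and uses \eqref{id: rcom} to reduce the whole expression to $(\delta-1)\big((z(xy))t-(z(xt))y\big)=0$. Your ``outer'' relation $\delta F(z,x,y,t)=F(x,z,y,t)$ and ``inner'' relation $F(z,x,y,t)=F(x,z,y,t)$ come from exactly those same four instances (the first pair and the second pair, respectively), and subtracting them reproduces the paper's combination, so the two proofs coincide up to bookkeeping.
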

\begin{proof} Let  $x,y,z,t\in N,$ then it clear that the following polynomial is an identity in $N:$
 \begin{longtable}{lllcl}
 $\big((x, z, t)_{\delta}  - (z, x, t)_{\delta} \big) y $&$-$&$
 \big((x, z, y)_{\delta}  - (z, x, y)_\delta\big) t $\\
 &$+$&$(z, xy  ,  t)_{\delta}  - (xy, z, t)_{\delta}  - (z, xt, y)_{\delta}  + (xt, z, y)_{\delta} $&$=$&$0.$
 \end{longtable}
  On the other hand, 
  \begin{longtable}{lllcl}
  $\big((x, z, t)_{\delta}  - (z, x, t)_{\delta} \big) y $&$-$&$
  \big((x, z, y)_{\delta}  - (z, x, y)_{\delta} \big) t$\\
  &$+$&$ (z, xy, t)_{\delta}  - (xy, z, t)_{\delta}  - (z, xt, y)_{\delta}  + (xt, z, y)_{\delta}$\\
  &$=$&
  $\delta  ((xz) t) y-(x (zt)) y-\delta  ((zx) t) y+(z (xt)) y$\\
  &&$\quad -\delta  ((xz) y) t+(x (zy )) t+\delta  ((zx) y) t-(z (xy)) t$\\
  &&$+\delta  (z (xy)) t-z ((xy) t)-\delta  ((xy) z) t+(xy) (zt)$\\
  &&$\quad-\delta  (z (xt)) y+z ((xt) y)+\delta  ((xt) z) y-(xt) (zy)$\\
  &$\overset{\eqref{id: rcom}}{=}$&$(\delta-1)\big((z(xy))t-(z(xt))y\big).$
  \end{longtable}
  Since, $\delta\neq 1$ we have  $(z(xy))t=(z(xt))y.$ 
  This completes the proof.
\end{proof}

\begin{lemma} Let $\delta\neq 1$ and $(N,\cdot)$ be a ${\delta}$-Novikov algebra over a field of characteristic zero,
    then the following identity holds true 
    \begin{equation}\label{id: strong rcom2}
    [xy,zt] \ = \ \delta \big( ((xy) z) t-   ((zx) y) t \big).    
    \end{equation}
  
\end{lemma}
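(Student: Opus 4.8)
The plan is to avoid expanding the two products through the $\delta$-associator (which, as is easy to check, quickly leads to circular rewriting that only reproduces $[xy,zt]$ and the claimed right-hand side in terms of each other) and instead to collapse the products into left-nested form by right commutativity at the very start. Writing $[xy,zt]=(xy)(zt)-(zt)(xy)$, I would read each of $xy$ and $zt$ as a product occupying the left slot of \eqref{id: rcom}: since $(ab)c=(ac)b$ holds for arbitrary elements, taking $a=x$, $b=y$, $c=zt$ gives $(xy)(zt)=(x(zt))y$, and taking $a=z$, $b=t$, $c=xy$ gives $(zt)(xy)=(z(xy))t$. Hence already $[xy,zt]=(x(zt))y-(z(xy))t$.

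Next I would bring both summands to a common right multiplier. Applying the strong right commutativity \eqref{id: strong rcom} from the previous lemma in the form $(x(zt))y=(x(zy))t$ (its general shape being $(p(qr))s=(p(qs))r$), the commutator becomes $[xy,zt]=(x(zy))t-(z(xy))t=\bigl(x(zy)-z(xy)\bigr)t$. Now a single use of the $\delta$-left-symmetry \eqref{id: del-lsym}, rewritten as $x(zy)-z(xy)=\delta\bigl((xz)y-(zx)y\bigr)$, yields $[xy,zt]=\delta\bigl((xz)y-(zx)y\bigr)t=\delta\bigl(((xz)y)t-((zx)y)t\bigr)$. A final application of \eqref{id: rcom} replacing $(xz)y$ by $(xy)z$ produces exactly $\delta\bigl(((xy)z)t-((zx)y)t\bigr)$, which is the assertion.

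The only place the hypothesis $\delta\neq1$ is needed is the appeal to \eqref{id: strong rcom}, which the previous lemma established precisely under that assumption; every other step holds for all $\delta$. The step I expect to be the genuine obstacle is the first one: if one tries to simplify $(xy)(zt)$ by the associator identity, the quartic products of type $(pq)(rs)$ re-enter and one loops back to the statement being proved, so the essential idea is to recognise $xy$ and $zt$ as left-slot arguments of \eqref{id: rcom} and reduce these products to left-nested form \emph{before} invoking \eqref{id: del-lsym}. Once that reduction is made, the identity drops out from \eqref{id: strong rcom} together with a single instance of $\delta$-left-symmetry.
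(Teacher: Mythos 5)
Your proof is correct, and it uses exactly the same ingredients as the paper's own argument: right commutativity \eqref{id: rcom}, the strong right commutativity \eqref{id: strong rcom} (the single point where $\delta\neq 1$ enters, as you correctly identify), and one instance of $\delta$-left-symmetry \eqref{id: del-lsym}. The paper simply runs the computation in the opposite direction, starting from the zero expression $\big((x,z,t)_{\delta}-(z,x,t)_{\delta}\big)y$ and rewriting its four quartic terms via \eqref{id: rcom} and \eqref{id: strong rcom} into the difference of the two sides of the claimed identity, so your left-to-right derivation is essentially the same proof in a different presentation.
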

\begin{proof} Let  $x,y,z,t\in N.$ Since, $\delta\neq 1$ by \eqref{id: rcom} and \eqref{id: strong rcom} we have
\begin{longtable}{rcl}
     $0$&$=$&$( x,z,t)_{\delta}y-( z,x,t)_{\delta}y$\\ &$=$&$\delta ((xz) t) y-(x (zt)) y - \delta ((z x) t) y+(z (x t)) y$\\ 
     & $\overset{\eqref{id: rcom},\eqref{id: strong rcom}}{=}$&$ \delta ((xy) z) t-(xy) (zt) - \delta ((zx) y) t+(zt)(xy).$  
\end{longtable}
This completes the proof.
\end{proof}



Now, we are ready to prove the main result of this section.

\begin{lemma}\label{lem: metabelian admiss}
    Let $\delta\neq 1$ and $(N,\cdot)$ be a ${\delta}$-Novikov algebra over a field of characteristic zero.
    Then, every ${\delta}$-Novikov algebra under the commutator product is a metabelian Lie algebra. 
\end{lemma}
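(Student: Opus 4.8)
The plan is to prove two separate things about the commutator algebra $(N,[\cdot,\cdot])$: that it is a Lie algebra, and that it satisfies the metabelian identity $[[a,b],[c,d]]=0$. The first is essentially free of charge. Anticommutativity of $[x,y]=xy-yx$ is automatic, and the Jacobi identity is a direct consequence of the identities \eqref{[xy]z} and \eqref{x[yz]} supplied by Lemma \ref{bdnov}. Indeed, writing $[[x,y],z]=[x,y]z-z[x,y]$ and summing cyclically splits the Jacobiator into the two sums $\circlearrowright_{x,y,z}[x,y]z$ and $\circlearrowright_{x,y,z}z[x,y]$, which vanish by \eqref{[xy]z} and \eqref{x[yz]} respectively. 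Hence the entire content of the lemma is the metabelian identity, and I would spend the rest of the argument there.

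For the metabelian identity, first I would expand $[[a,b],[c,d]]$ by bilinearity of the commutator. Setting $P=ab-ba$ and $Q=cd-dc$ and computing $PQ-QP$ gives
$$[[a,b],[c,d]]=[ab,cd]-[ab,dc]-[ba,cd]+[ba,dc],$$
where $[uv,wt]:=(uv)(wt)-(wt)(uv)$ denotes the commutator of two products. The key tool is the already-established identity \eqref{id: strong rcom2}, namely $[xy,zt]=\delta\big(((xy)z)t-((zx)y)t\big)$, which rewrites each commutator of two products as a $\delta$-multiple of a difference of two left-normed degree-four monomials.

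Applying \eqref{id: strong rcom2} to all four terms produces $\delta$ times a signed sum of eight left-normed monomials. The main step is then to check that these eight monomials cancel in pairs, and I expect this to follow purely from right commutativity \eqref{id: rcom}: the identity $(uv)w=(uw)v$ lets one freely transpose the last two factors of a left-normed product, and, applied at the inner level, it also yields relations such as $(cb)a=(ca)b$. Four of the eight monomials cancel by an outer transposition of factors, and the remaining four cancel by such inner transpositions, so that $[[a,b],[c,d]]=\delta\cdot 0=0$ uniformly in $\delta$ (in particular the known bicommutative case $\delta=0$ is covered, where \eqref{id: strong rcom2} already makes each commutator of products vanish outright).

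The main obstacle is purely bookkeeping: tracking signs and the positions of the four factors through the repeated applications of \eqref{id: rcom}. There is no conceptual difficulty once \eqref{id: strong rcom2} is in hand, and I would note that \eqref{id: strong rcom2} is precisely where the hypothesis $\delta\neq 1$ is genuinely used — its derivation in the preceding lemma invokes \eqref{id: strong rcom} and divides by $\delta-1$. Combining the Lie property established in the first paragraph with this metabelian identity yields that $(N,[\cdot,\cdot])$ is a metabelian Lie algebra, completing the proof.
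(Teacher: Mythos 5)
Your proposal is correct and follows essentially the same route as the paper: both reduce the Lie structure to Lemma \ref{bdnov}, expand $[[a,b],[c,d]]$ by bilinearity into four commutators of products, rewrite each via \eqref{id: strong rcom2}, and cancel the resulting eight left-normed monomials in pairs using inner and outer applications of right commutativity \eqref{id: rcom}. Your added remark that $\delta\neq 1$ enters only through the derivation of \eqref{id: strong rcom} and \eqref{id: strong rcom2} is also accurate.
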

\begin{proof} By Lemma \ref{bdnov}, it remains to show that the metabelian identity holds true in $N.$ Then by definition of commutator product $[a,b]$ and the identities 
\eqref{id: rcom},  \eqref{id: strong rcom} and \eqref{id: strong rcom2}, we have
\begin{longtable}{lclcl}
$[[x,y],[z,t]]$&$=$&$[xy,zt]-[yx,zt]-[xy,tz]+[yx,tz]$\\
&$=$&$\delta((xy) z) t- \delta ((z x) y) t-\delta((yx ) z) t+ \delta ((zx) y) t$\\
&&$\quad -\delta((xy) z) t+ \delta ((tx) y) z+\delta((yx) z) t- \delta ((tx) y) z$&$=$&$0.$
\end{longtable}
Therefore, the lemma is proved.
   
\end{proof}

\begin{theorem} \label{th: free met Lie}
 Every identity satisfied by the commutator product $([a,b]=a\cdot b-b\cdot a)$ in every $\delta$-Novikov ($\delta\neq 1$) algebra over a field of characteristic zero is a consequence of anti-commutativity, the Jacobi, and the metabelian identities.
\end{theorem}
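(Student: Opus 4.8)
The plan is to recognize the statement as the claim that the variety $\mathcal{C}$ generated by the commutator algebras $(N,[\,,\,])$ — with $N$ a $\delta$-Novikov algebra, $\delta\neq1$, over a field of characteristic zero — coincides with the variety $\mathcal{M}$ of metabelian Lie algebras, and to prove this equality by two inclusions. Saying that every identity of $\mathcal{C}$ is a consequence of anticommutativity, the Jacobi identity, and the metabelian identity $[[x,y],[z,t]]=0$ is exactly the inclusion $\mathcal{M}\subseteq\mathcal{C}$ (a larger variety satisfies fewer identities). Lemma \ref{lem: metabelian admiss} supplies the reverse inclusion $\mathcal{C}\subseteq\mathcal{M}$, since it shows that every $(N,[\,,\,])$ is already metabelian Lie and hence satisfies all three generating identities together with all of their consequences. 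Thus the whole content of the theorem is the inclusion $\mathcal{M}\subseteq\mathcal{C}$.

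Since varieties are closed under subalgebras and $\mathcal{M}=\operatorname{var}(M)$ is generated by the free metabelian Lie algebra $M$ on $x_1,x_2,\dots$, it suffices to embed $M$, as a Lie algebra, into some commutator algebra $(N,[\,,\,])\in\mathcal{C}$; then $M\in\mathcal{C}$ and hence $\mathcal{M}\subseteq\mathcal{C}$. This is precisely the embedding announced in the abstract. I would take $N$ to be the free $\delta$-Novikov algebra on $x_1,x_2,\dots$ (equivalently, since $(N,[\,,\,])$ is metabelian by the Lemma, the free metabelian $\delta$-Novikov algebra, for which a linear normal form is available in the spirit of \cite{B} for $\delta=1$ and \cite{DzhIT} for $\delta=0$), and let $\iota\colon M\to(N,[\,,\,])$ be the Lie homomorphism determined by $\iota(x_i)=x_i$, which is well defined by the universal property of $M$ and Lemma \ref{lem: metabelian admiss}. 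Fix the standard monomial basis of $M$: the generators $x_i$ together with the left-normed basic commutators $[x_{i_0},x_{i_1},\dots,x_{i_k}]$ subject to $i_0>i_1$ and $i_1\le i_2\le\cdots\le i_k$; the tail may be reordered because in any metabelian Lie algebra $[[a,b],c,d]=[[a,b],d,c]$, the discrepancy being a bracket of two commutators, which vanishes by the metabelian identity. Injectivity of $\iota$ then amounts to showing that the images of these basis elements are linearly independent in $N$.

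The verification of this linear independence is the main obstacle, and it is exactly where $\delta\neq1$ and $\operatorname{char}=0$ enter. The tool is the rewriting already used to prove Lemma \ref{lem: metabelian admiss}: with the identities \eqref{id: rcom}, \eqref{id: del-lsym}, \eqref{id: strong rcom} and \eqref{id: strong rcom2} one expands each left-normed commutator $\iota([x_{i_0},\dots,x_{i_k}])$ into a combination of left-normed $\delta$-Novikov products of the generators, and \eqref{id: strong rcom2} shows that the coefficient of the distinguished product is a nonzero scalar (a power of $\delta$ arising from the factor $\delta-1\neq0$) rather than a cancelling one. Choosing a monomial order on the normal-form basis of the free $\delta$-Novikov algebra, one checks that distinct basic commutators of $M$ acquire pairwise distinct leading words, so that the transition matrix in each multidegree is triangular with nonzero diagonal entries; this forces the images to be independent and $\iota$ to be injective. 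Granting the embedding, $M\in\mathcal{C}$, hence $\mathcal{M}\subseteq\mathcal{C}$, and combined with Lemma \ref{lem: metabelian admiss} this gives $\mathcal{C}=\mathcal{M}$, which is the assertion of the theorem. The two delicate points to secure are the explicit $\delta$-Novikov normal form valid for all $\delta\neq1$, and the guarantee that the scalar produced by \eqref{id: strong rcom2} never vanishes — both genuinely using $\delta\neq1$ and characteristic zero.
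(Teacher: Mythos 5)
Your high-level reduction is correct: by Lemma \ref{lem: metabelian admiss} the commutator algebras generate a subvariety $\mathcal{C}$ of the variety $\mathcal{M}$ of metabelian Lie algebras, and the theorem is equivalent to the reverse inclusion $\mathcal{M}\subseteq\mathcal{C}$, which would follow from embedding the countably generated free metabelian Lie algebra $M$ into some commutator algebra $(N,[\cdot,\cdot])$. But the step that carries all the weight --- injectivity of your map $\iota$ from $M$ into the free $\delta$-Novikov algebra --- is asserted, not proved, and this is a genuine gap. It presupposes a monomial basis (normal form) of the free $\delta$-Novikov algebra valid for all $\delta\neq 1$, which neither the paper nor its references provide (bases are known for $\delta=1$, for $\delta=0$, and in metabelian quotients, not for general $\delta$); and your proposed justification for the triangularity --- that the leading coefficients are ``nonzero powers of $\delta$'' coming from \eqref{id: strong rcom2} --- visibly breaks down at $\delta=0$, since then \eqref{id: strong rcom2} reads $[xy,zt]=0$, yet $\delta=0$ is within the scope of the theorem. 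Note moreover that injectivity of the natural map from $M$ into the commutator algebra of the free $\delta$-Novikov algebra is not an auxiliary lemma: it is literally equivalent to the statement being proved, so until it is established nothing has been gained.

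The paper's proof takes a different and much shorter route that avoids free algebras entirely. After Lemma \ref{lem: metabelian admiss}, it invokes Bahturin's theorem \cite{Bahturin1975}: over a field of characteristic zero every \emph{proper} subvariety of the variety of metabelian Lie algebras is nilpotent. Hence, if some identity of $\mathcal{C}$ did not follow from anticommutativity, Jacobi, and metabelianity, then $\mathcal{C}$ would be nilpotent; but the two-dimensional algebra of Example \ref{ex: 2-dim d nov} (with $e_1e_2=e_2$) lies in $\bigcap_{\delta}\mathcal{V}ar_{\delta}$, and its commutator algebra satisfies $[e_1,[e_1,\ldots,[e_1,e_2]\ldots]]=e_2$, so it is not nilpotent --- a contradiction. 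If you wish to keep your two-inclusion strategy, the clean way to close your gap is not a normal-form computation but the embedding result of \cite{IMS2024} (Theorem \ref{th: every mlie special} in the paper): every metabelian Lie algebra embeds, with respect to the commutator, into an algebra of the variety $\mathcal{W}\subseteq\bigcap_{\delta}\mathcal{V}ar_{\delta}$, which immediately yields $M\in\mathcal{C}$ and hence $\mathcal{M}\subseteq\mathcal{C}$.
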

\begin{proof} By Lemma \ref{lem: metabelian admiss}, we know that $(N,[\cdot,\cdot])$ is a metabelian Lie algebra. In \cite{Bahturin1975}, it was proved that every subvariety of the variety of metabelian Lie algebras is nilpotent over the field of characteristic zero. If there is an identity that does not follow from anti-commutative, Jacobi, and metabelian identities, then as we mentioned by the result in  \cite{Bahturin1975}  $(N,[\cdot,\cdot])$  must be nilpotent. Therefore, to obtain a contradiction, it is enough to show there is a non-nilpotent metabelian Lie algebra obtained from $\delta$-Novikov algebra. Let us consider the algebra $\mathcal A$ in Example \ref{ex: 2-dim d nov}. Then it is clear that $[e_1,[e_1,[\ldots,[e_1,e_2]\ldots]]]=e_2,$ thus $(\mathcal A,[\cdot,\cdot])$ is not nilpotent. Since $$\mathcal A\in\mathcal{V}_0=\bigcap_{\delta} \mathcal{V}ar_{\delta},$$ we have the desired result.
\end{proof}


In \cite{IMS2024}, it was proved that every metabelian Lie algebra can be embedded into some 
$2$-step left nilpotent bicommutative algebra under the commutator $[a,b]= ab-ba$. More precisely, they proved that every metabelian Lie algebra can be embedded into an algebra from the subvariety of the variety of bicommutative algebras with respect to the commutator \cite[Corollary 2.5]{IMS2024}.

Let  $\mathcal{W}$ be a subvariety of the variety $\mathcal{V}_0$ defined by the following identities:
\begin{center}
$x(yz)=y(xz),$ \ 
$(xy)z=0.$
\end{center}

Using exactly the same arguments as in \cite[Section 2]{IMS2024}, it is easy to obtain:

\begin{theorem}\label{th: every mlie special}
Every metabelian Lie algebra can be embedded into an algebra of $\mathcal{W}$ with respect to the commutator product.
\end{theorem}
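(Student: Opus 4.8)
The plan is to mimic the embedding construction of \cite{IMS2024} verbatim, replacing their bicommutative target algebra by an algebra in the subvariety $\mathcal{W}$ of $\mathcal{V}_0$. Recall that $\mathcal{W}$ is defined by the left-commutativity identity $x(yz)=y(xz)$ together with the $3$-step-vanishing identity $(xy)z=0$; note that both identities are symmetric in a way that interacts nicely with the commutator, and that any algebra in $\mathcal{W}$ lies in $\bigcap_\delta \mathcal{V}ar_\delta$, so its commutator is simultaneously a $\delta$-Novikov commutator for \emph{every} $\delta$. Thus it suffices to produce, for an arbitrary metabelian Lie algebra $\mathfrak{m}$, an algebra $A\in\mathcal{W}$ together with an injective Lie homomorphism $\mathfrak{m}\hookrightarrow (A,[\cdot,\cdot])$.

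The key steps, in order, would be as follows. First I would fix a metabelian Lie algebra $\mathfrak{m}$ with bracket $[\,,\,]$; metabelianness means $[[x,y],[z,t]]=0$, equivalently the derived subalgebra $\mathfrak{m}'=[\mathfrak{m},\mathfrak{m}]$ is abelian. Following \cite[Section 2]{IMS2024}, I would take as underlying vector space of $A$ the same space used there (typically $\mathfrak{m}$ itself, or $\mathfrak{m}$ together with an auxiliary summand carrying the ``$x(yz)$'' action), and define the bilinear product $xy$ on $A$ by a \emph{one-sided} formula built from the Lie bracket and the adjoint action, chosen so that its anti-symmetrization $xy-yx$ recovers the original bracket $[x,y]$. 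Second, I would verify that this product lands in $\mathcal{W}$: the identity $(xy)z=0$ should follow because the product is valued in $\mathfrak{m}'$ and a second multiplication annihilates it (this is exactly where $2$-step left nilpotency of the \cite{IMS2024} construction is used), and the left-commutativity $x(yz)=y(xz)$ should reduce, after expanding the defining formula, to the Jacobi identity together with the metabelian relation. Third, I would confirm that the induced commutator equals $[x,y]$ by direct anti-symmetrization, and that the embedding $\mathfrak{m}\to(A,[\cdot,\cdot])$ is injective.

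The main obstacle I expect is purely bookkeeping rather than conceptual: one must check that the single defining product formula of \cite{IMS2024}, which was tailored so that the algebra is bicommutative, in fact already satisfies the \emph{stronger} pair of $\mathcal{W}$-identities $x(yz)=y(xz)$ and $(xy)z=0$, and not merely the two weaker bicommutative identities $(xy)z=(xz)y$ and $x(yz)=(xy)z$. If their construction only yields bicommutativity, I would need to refine it so as to force these two sharper relations; this is where I would look most carefully, since the whole point of introducing $\mathcal{W}$ is that membership in $\mathcal{W}\subseteq\bigcap_\delta\mathcal{V}ar_\delta$ is what makes the commutator a valid $\delta$-Novikov commutator for all $\delta$ at once. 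Since the authors assert the result follows by ``exactly the same arguments,'' I anticipate that their formula does give $\mathcal{W}$ directly, so the proof amounts to quoting the construction, re-verifying the two $\mathcal{W}$-identities by the same short computations, and invoking the injectivity and commutator-recovery already established there.
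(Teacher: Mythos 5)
Your proposal is correct and is essentially the paper's own proof: the paper gives no construction at all, but simply invokes ``exactly the same arguments as in \cite[Section 2]{IMS2024},'' which is precisely your plan of re-running their embedding and checking it lands in $\mathcal{W}$. The obstacle you flag at the end in fact dissolves: the identity $x(yz)=y(xz)$ is itself one of the two defining identities of bicommutativity (you misquote them as $(xy)z=(xz)y$ and $x(yz)=(xy)z$; the correct pair is $(xy)z=(xz)y$ and $x(yz)=y(xz)$), while the extra identity $(xy)z=0$ is exactly the ``$2$-step one-sided nilpotency'' already asserted in \cite[Corollary 2.5]{IMS2024} as quoted in the paper, so no refinement of their construction is needed.
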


Therefore, by Theorem \ref{th: free met Lie} and Theorem \ref{th: every mlie special} we have

\begin{corollary}
    Every metabelian Lie algebra can be embedded into a suitable $\delta$-Novikov algebra with respect to the commutator product.
\end{corollary}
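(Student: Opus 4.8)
The plan is to package the two preceding theorems together with the chain of variety inclusions $\mathcal{W}\subseteq\mathcal{V}_0\subseteq\mathcal{V}ar_\delta$, so that essentially no new computation is needed. First I would fix an arbitrary metabelian Lie algebra $L$ and an arbitrary scalar $\delta\neq 1$. By Theorem~\ref{th: every mlie special}, the algebra $L$ embeds into some algebra $A\in\mathcal{W}$ under the commutator product $[a,b]=ab-ba$. This is the only genuinely constructive ingredient, and it is supplied by the embedding of \cite[Section 2]{IMS2024}, reproduced here in the statement of Theorem~\ref{th: every mlie special}.

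The key observation is then that any algebra lying in $\mathcal{W}$ is automatically a $\delta$-Novikov algebra for every $\delta$. Indeed, $\mathcal{W}$ was introduced as a subvariety of $\mathcal{V}_0$, and by the Proposition identifying $\bigcap_{\delta}\mathcal{V}ar_{\delta}=\mathcal{V}_0$ we obtain $\mathcal{W}\subseteq\mathcal{V}_0\subseteq\mathcal{V}ar_\delta$ for each fixed value of $\delta$ (in particular for our chosen $\delta\neq 1$). Hence the host algebra $A$ satisfies the defining identities \eqref{id: del-lsym} and \eqref{id: rcom}, so $A$ is a $\delta$-Novikov algebra, and the commutator embedding $L\hookrightarrow(A,[\cdot,\cdot])$ is precisely the desired embedding into a $\delta$-Novikov algebra with respect to the commutator product.

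Finally, I would record that the result is sharp, which is exactly where Theorem~\ref{th: free met Lie}, via Lemma~\ref{lem: metabelian admiss}, enters: for any $\delta$-Novikov algebra $A$ with $\delta\neq 1$ the commutator algebra $(A,[\cdot,\cdot])$ is metabelian Lie, so every subalgebra of such a commutator algebra is again metabelian Lie. Consequently metabelian Lie algebras are exactly the Lie algebras admitting such an embedding, and the two theorems together characterize them completely. I do not anticipate any real obstacle in the corollary itself; the entire difficulty has already been discharged in Theorem~\ref{th: every mlie special}, whose proof must faithfully transcribe the construction of \cite{IMS2024} and, crucially, verify that the constructed host algebra lands inside the smaller variety $\mathcal{W}$ rather than merely inside $\mathcal{V}_0$, since it is the containment $\mathcal{W}\subseteq\mathcal{V}ar_\delta$ that makes the commutator target a $\delta$-Novikov algebra for all $\delta$ simultaneously.
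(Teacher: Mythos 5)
Your proposal is correct and follows essentially the same route as the paper: the paper also deduces the corollary by combining Theorem~\ref{th: every mlie special} (embedding into an algebra of $\mathcal{W}$) with the inclusions $\mathcal{W}\subseteq\mathcal{V}_0=\bigcap_{\delta}\mathcal{V}ar_{\delta}$, so that the host algebra is a $\delta$-Novikov algebra for every $\delta$ simultaneously. Your closing remark on sharpness (via Lemma~\ref{lem: metabelian admiss} and Theorem~\ref{th: free met Lie}) is exactly the role the paper assigns to Theorem~\ref{th: free met Lie} when it cites both theorems together.
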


\section{Relations to   (transposed) $\delta$-Poisson algebras}\label{dpoisson}

In this section, we introduce the notions of $\delta$-Poisson algebras and transposed $\delta$-Poisson algebras, and we demonstrate how they can be constructed using $\delta$-derivations. This construction parallels the classical approach to Poisson algebras via derivations.

One of the main examples of Poisson algebras is:

\begin{example}
Consider the polynomial algebra $ H_n = K[x_1, \dots, x_n, y_1, \dots, y_n] $ over a field $ K $, equipped with the Poisson bracket defined by
\[
\{f, g\} = \sum_{i=1}^{n} \left( \frac{\partial f}{\partial x_i} \frac{\partial g}{\partial y_i} - \frac{\partial f}{\partial y_i} \frac{\partial g}{\partial x_i} \right), \quad \forall f, g \in H_n,
\]
where $ \frac{\partial}{\partial z} $ denotes the partial derivative with respect to the variable $ z \in \{ x_1, \dots, x_n, y_1, \dots, y_n \} $. This algebra forms a Poisson algebra because the multiplication $ \{\cdot, \cdot\} $ satisfies the Lie     multiplication properties and the Leibniz rule with respect to the commutative associative multiplication in $ H_n $.
\end{example}

More generally, we have the following:

\begin{example}\label{ex: poisson} 
Let $ (\mathcal{A}, \cdot) $ be a commutative associative algebra, and let $ D_1, D_2 $ be commuting derivations of $ \mathcal{A} $, that is, $ D_1 D_2 = D_2 D_1 $. Define a new multiplication on $ \mathcal{A} $ by
\[
\{ x, y \} = D_1(x) D_2(y) - D_1(y) D_2(x).
\]
Then $ (\mathcal{A}, \{\cdot, \cdot\}) $ is a Lie algebra, and $ (\mathcal{A}, \cdot, \{\cdot, \cdot\}) $ is a Poisson algebra.
\end{example}

Now, we define the generalization of these notions by introducing the definition of  $\delta$-Poisson algebras, which appeared first in \cite{dP}.

\begin{definition}[see, \cite{dP}]
Let $ \delta $ be a fixed complex number. An algebra $ ({\rm P}, \cdot, \{\cdot, \cdot\}) $ is called a $ \delta $-Poisson algebra\footnote{When $ \delta = 1 $, we recover the usual definition of  Poisson algebras.} if $ ({\rm P}, \cdot) $ is a commutative associative algebra, $ ({\rm P}, \{\cdot, \cdot\}) $ is a Lie algebra and 
  the following identity holds:
    \begin{equation}\label{deltpois}
    \{ x y, z \} = \delta \big( x \{ y, z \} + \{ x, z \} y \big).
    \end{equation}

\end{definition}

We can construct $\delta$-Poisson algebras using $\delta$-derivations.

\begin{theorem}\label{dpdd}
   Let $\mathcal{A}$ be a commutative associative algebra and let $\varphi_1$ and $\varphi_2$ be commuting nonzero $\delta $-derivations of $\mathcal{A} $. Then define $$\llbracket x,y \rrbracket= \varphi_1(x) \varphi_2(y)-\varphi_2(x) \varphi_1(y).$$ Then $(\mathcal{A}, \cdot, \llbracket \cdot,\cdot \rrbracket)$ is a $\delta$-Poisson algebra.
\end{theorem}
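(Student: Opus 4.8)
The plan is to verify the three defining conditions of a $\delta$-Poisson algebra for the triple $(\mathcal{A}, \cdot, \llbracket\cdot,\cdot\rrbracket)$. That $(\mathcal{A},\cdot)$ is commutative associative is given, so the work splits into: (i) showing $(\mathcal{A}, \llbracket\cdot,\cdot\rrbracket)$ is a Lie algebra, i.e.\ anticommutativity together with the Jacobi identity, and (ii) verifying the compatibility identity \eqref{deltpois}. Anticommutativity is immediate: since $\cdot$ is commutative, $\llbracket y,x\rrbracket = \varphi_1(y)\varphi_2(x) - \varphi_2(y)\varphi_1(x) = -\llbracket x,y\rrbracket$, so I would dispose of it in one line.

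For the compatibility identity \eqref{deltpois}, I would expand $\llbracket xy, z\rrbracket = \varphi_1(xy)\varphi_2(z) - \varphi_2(xy)\varphi_1(z)$ and apply the $\delta$-derivation rule of Definition \ref{12der} to each of $\varphi_1(xy)$ and $\varphi_2(xy)$, producing an overall factor $\delta$ and four terms of the form $\varphi_i(x)\,y\,\varphi_j(z)$ and $x\,\varphi_i(y)\,\varphi_j(z)$. Using commutativity and associativity of $\cdot$ to reorder factors, I would match this expression termwise against $\delta\bigl(x\llbracket y,z\rrbracket + \llbracket x,z\rrbracket y\bigr)$. This step is purely formal and uses neither the Jacobi structure nor the commuting hypothesis.

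The main obstacle is the Jacobi identity $\circlearrowright_{x,y,z}\llbracket\llbracket x,y\rrbracket, z\rrbracket = 0$. My approach is to compute $\llbracket\llbracket x,y\rrbracket, z\rrbracket$ explicitly: writing $w = \llbracket x,y\rrbracket$, I would apply $\varphi_1$ and $\varphi_2$ to $w$ using the $\delta$-derivation rule (each application contributing a single factor $\delta$, which then factors out of the whole expression), expand $\llbracket w, z\rrbracket = \varphi_1(w)\varphi_2(z) - \varphi_2(w)\varphi_1(z)$, and collect the result as $\delta$ times a sum of eight monomials, each of the form $P(x)\,Q(y)\,R(z)$ in which one of $P,Q,R$ is a second-order operator ($\varphi_1^2$, $\varphi_1\varphi_2$, or $\varphi_2^2$) and the other two are first-order. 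It is precisely here that the hypothesis $\varphi_1\varphi_2 = \varphi_2\varphi_1$ enters: it lets me identify the two mixed compositions $\varphi_1\varphi_2$ and $\varphi_2\varphi_1$ as a single operator, so that the corresponding monomials become comparable as elements of $\mathcal{A}$.

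Finally, I would form the cyclic sum by relabeling $(x,y,z)\mapsto(y,z,x)\mapsto(z,x,y)$ in the eight-term expansion, obtaining twenty-four monomials indexed by their operator-triples $(O_x,O_y,O_z)$. Organizing these by operator-triple, I expect each monomial to occur exactly twice with opposite signs, so that the whole sum vanishes; this bookkeeping is the only genuinely laborious part, but it becomes routine once the commuting hypothesis has collapsed $\varphi_1\varphi_2$ and $\varphi_2\varphi_1$ into one operator. With anticommutativity, the Jacobi identity, and \eqref{deltpois} all established, the conclusion follows.
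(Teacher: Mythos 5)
Your proposal is correct and follows essentially the same route as the paper: the compatibility identity \eqref{deltpois} is verified by expanding the bracket of a product via the $\delta$-derivation rule (the single $\delta$ factoring out), and the Jacobi identity is obtained from the explicit eight-monomial expansion of $\llbracket\llbracket x,y\rrbracket,z\rrbracket$, with the hypothesis $\varphi_1\varphi_2=\varphi_2\varphi_1$ used exactly where you place it — to identify the mixed second-order compositions so that the twenty-four terms of the cyclic sum cancel in pairs. The only cosmetic difference is that the paper checks the Leibniz-type identity in the form $\llbracket x, y\cdot z\rrbracket=\delta\bigl(\llbracket x,y\rrbracket\, z + y\,\llbracket x,z\rrbracket\bigr)$, which is equivalent to yours by anticommutativity.
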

\begin{proof}   First, we note that $(\mathcal{A},  \llbracket \cdot,\cdot \rrbracket)$ is a Lie algebra.
 It follows from 
 \begin{longtable}{lcl ll ll}
 $\llbracket\llbracket x, y \rrbracket, z \rrbracket$&$=$&$
\delta \big( $&$\varphi_1^2(x)\varphi_2(y)\varphi_2(z)+\varphi_1(x)\varphi_1\varphi_2(y)\varphi_2(z)-$\\
&&&\multicolumn{1}{l}{ $\varphi_2\varphi_1(x)\varphi_2(y)\varphi_1(z)-\varphi_1(x)\varphi_2^2(y)\varphi_1(z)-$}\\
&&&\multicolumn{1}{c}{$
 \varphi_1\varphi_2(x)\varphi_1(y)\varphi_2(z)-\varphi_2(x)\varphi_1^2(y)\varphi_2(z)+$}\\
 \multicolumn{5}{r}{ $\varphi_2^2(x)\varphi_1(y)\varphi_1(z)+\varphi_2(x)\varphi_2\varphi_1(y)\varphi_1(z)$}&$\big). $
 \end{longtable}
The last gives 
\begin{center}
     $\llbracket\llbracket x, y \rrbracket, z \rrbracket+ 
     \llbracket\llbracket y, z \rrbracket, x \rrbracket+ 
     \llbracket\llbracket z, x \rrbracket, y \rrbracket\ =\ 0.$
\end{center}

By definition of the  new multiplication $\llbracket \cdot , \cdot \rrbracket$ and $\delta$-derivations we have

\begin{longtable}{lcl}
  $  \llbracket x, y \cdot z \rrbracket$ &$=$&$ 
    \varphi_1(x) \varphi_2(y \cdot z) - \varphi_2(x) \varphi_1(y \cdot z)$ \\

    & $=$&$ \varphi_1(x) \cdot \delta \left(\varphi_2(y) \cdot z + 
    y \cdot \varphi_2(z)\right) - \varphi_2(x) \cdot \delta \left(\varphi_1(y) \cdot z + y \cdot \varphi_1(z)\right)$ \\
    & $=$&$ \delta \left(\varphi_1(x) \varphi_2(y) \cdot z + 
    \varphi_1(x) \cdot y \cdot \varphi_2(z) - 
    \varphi_2(x) \varphi_1(y) \cdot z - 
    \varphi_2(x) \cdot y \cdot \varphi_1(z)\right)$ \\
    & $=$&$ \delta \left(\left(\varphi_1(x) \varphi_2(y) - \varphi_2(x) \varphi_1(y)\right) \cdot z + y \cdot \left(\varphi_1(x) \varphi_2(z) - 
    \varphi_2(x) \varphi_1(z)\right)\right)$\\
    & $=$&$ \delta \big(\llbracket x, y\rrbracket \cdot z + y \cdot \llbracket x, z\rrbracket\big).$
\end{longtable} 
\end{proof}

\begin{definition}[see, \cite{dP}]
Let $ \delta $ be a fixed complex number. An algebra $ ({\rm P}, \cdot, \{\cdot, \cdot\}) $ is called a transposed $ \delta $-Poisson algebra\footnote{When $ \delta =2 $, this definition coincides with the standard notion of transposed Poisson algebras, as introduced in \cite{TP1}.} if
 $ ({\rm P}, \cdot) $ is a commutative associative algebra,
  $ ({\rm P}, \{\cdot, \cdot\}) $ is a Lie algebra and 
the following identity holds:
    \begin{equation}\label{id: deltranspois}
    \delta x \{ y, z \} = \{ x y, z \} + \{ y, x z \}.
    \end{equation}
\end{definition}

A similar example to Example \ref{ex: poisson}, leading to transposed Poisson algebras, can be defined by using only one derivation.

\begin{example}[see, \cite{TP1}]
Let $ (\mathcal{A}, \cdot) $ be a commutative associative algebra, and let $ D $ be a derivation of $\mathcal{A} $. Define a  new multiplication on $ \mathcal{A} $ by
\[
\{ x, y \} = x D(y) - D(x) y.
\]
Then $ (\mathcal{A}, \{\cdot, \cdot\}) $ is a Lie algebra, and $ (\mathcal{A}, \cdot, \{\cdot, \cdot\}) $ forms a transposed Poisson algebra.
\end{example}

We establish a connection between $\delta$-Novikov--Poisson algebras and transposed $(\delta + 1)$-Poisson algebras.

\begin{theorem}\label{th: d Nov d trPoison}
     Let $(N, \cdot, \circ)$ be a $\delta$-Novikov--Poisson algebra. Define 
\[
[x, y] = x \circ y - y \circ x.
\]
Then $(N, \cdot, [\cdot, \cdot])$ is a transposed $(\delta+1)$-Poisson algebra.
\end{theorem}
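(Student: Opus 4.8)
The plan is to verify the two defining conditions of a transposed $(\delta+1)$-Poisson algebra for the triple $(N, \cdot, [\cdot,\cdot])$, where $[x,y] = x\circ y - y\circ x$. These are: first, that $(N, [\cdot,\cdot])$ is a Lie algebra, and second, that the transposed Leibniz identity
\[
(\delta+1)\, x[y,z] \ = \ [xy, z] + [y, xz]
\]
holds (using the commutative associative product $\cdot$, which I suppress in notation). The commutative associativity of $(N,\cdot)$ is given, so nothing is needed there.

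For the Lie algebra structure, anti-commutativity of $[\cdot,\cdot]$ is immediate from its definition. The Jacobi identity is exactly the content of Lemma~\ref{bdnov}, part~\ref{ident}: since $(N,\circ)$ is a $\delta$-Novikov algebra, identity~\eqref{[xy]z} (or equivalently \eqref{x[yz]}) holds, which is precisely the statement that $N$ is Lie-admissible, i.e.\ $(N,[\cdot,\cdot])$ is a Lie algebra. So this half of the proof is essentially free once I invoke the earlier lemma.

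The real work is the transposed Leibniz identity, and this is where I expect the main obstacle to lie. The strategy is to expand the right-hand side $[xy,z] + [y,xz]$ using the definition of the commutator and then apply the two compatibility identities of the $\delta$-Novikov--Poisson structure, namely \eqref{id: NP ass} (which reads $(xy)\circ z = x(y\circ z)$) and \eqref{id: NP leftsym} (the mixed $\delta$-left-symmetry $\delta(x\circ y)z - x\circ(yz) = \delta(y\circ x)z - y\circ(xz)$). Writing everything out,
\[
[xy,z] + [y,xz] \ = \ (xy)\circ z - z\circ(xy) + y\circ(xz) - (xz)\circ y.
\]
The term $(xy)\circ z$ rewrites via \eqref{id: NP ass} as $x(y\circ z)$, and similarly $(xz)\circ y = x(z\circ y)$. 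The plan is then to regroup the remaining terms $-z\circ(xy) + y\circ(xz)$ and combine them with \eqref{id: NP leftsym}: applying that identity (with a suitable substitution of variables, treating the $\cdot$-product as the associative factor) should convert $y\circ(xz)$ and $z\circ(xy)$ into expressions of the form $\delta(\cdots)z$ and $x(\cdots)$ that collapse. The goal is to show the whole right-hand side equals $(\delta+1)\,x(y\circ z - z\circ y) = (\delta+1)\,x[y,z]$.

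The delicate point is keeping the bookkeeping straight: \eqref{id: NP leftsym} is symmetric under swapping the first two Novikov-arguments but mixes the associative and Novikov products, so I will need to apply it to the correct pairs and possibly use commutativity of $\cdot$ (so that $xz = zx$, etc.) to align terms. I anticipate that after substituting $(xy)\circ z \to x(y\circ z)$ and $(xz)\circ y \to x(z\circ y)$, the surviving difference $x(y\circ z) - x(z\circ y) - z\circ(xy) + y\circ(xz)$ must be shown to equal $(\delta+1)x(y\circ z) - (\delta+1)x(z\circ y)$; isolating this reduces to a single application of \eqref{id: NP leftsym} rearranged as $x\circ(yz) - y\circ(xz) = \delta(x\circ y)z - \delta(y\circ x)z$, which supplies exactly the factor $\delta$ needed to upgrade the coefficient from $1$ to $\delta+1$. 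Verifying that the scalar coefficients assemble correctly into $\delta+1$ is the crux, and I would carry it out by a careful term-by-term comparison rather than any clever trick.
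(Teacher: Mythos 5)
Your proposal is correct and follows essentially the same route as the paper's own proof: the Lie structure comes from Lemma~\ref{bdnov}, the terms $(xy)\circ z$ and $(xz)\circ y$ are rewritten via \eqref{id: NP ass}, and the remaining pair $-z\circ(xy)+y\circ(xz)$ is converted by a single application of \eqref{id: NP leftsym} (together with commutativity of $\cdot$) into $\delta x(y\circ z)-\delta x(z\circ y)$, assembling the coefficient $\delta+1$. The term-by-term verification you defer is exactly the short computation the paper carries out, and it goes through without any obstacle.
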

\begin{proof} From Lemma \ref{bdnov}, we know that $ (N, [\cdot, \cdot ]) $ is a Lie algebra. It remains to verify the transposed $ (\delta + 1) $-Poisson identity \eqref{id: deltranspois}. For all $a,b,c\in N,$ by \eqref{id: NP ass} and \eqref{id: NP leftsym}   we have
\begin{longtable}{lclcl}
    $[a c,b]+[a,b c]$
    & $=$&$ (ac)\circ b-b\circ (ac)+a\circ(bc)-(bc)\circ a$\\
    &$=$&$(ac)\circ b-\delta(b\circ a)c+\delta(a\circ b)c-(bc)\circ a$\\
    &$=$&$c(a\circ b)-\delta(b\circ a)c+\delta(a\circ b)c-c(b\circ a)$\\
    &$=$&$(\delta+1)(a\circ b)c-(\delta+1)(b\circ a)c$    &$=$&$(\delta+1)c[a,b].$
\end{longtable}
    Therefore, $ (N, \cdot, [\cdot, \cdot]) $ is a transposed $ (\delta + 1) $-Poisson algebra.
\end{proof}

  As a corollary, we can construct transposed $ (\delta + 1) $-Poisson algebras using $ \delta $-derivations:

\begin{corollary}\label{tdpdd}
     Let $\mathcal{A}$ be a commutative associative algebra and let $\varphi $ be a nontrivial $\delta $-derivation of $\mathcal{A} $. Then define 
     $$ \llbracket a,b \rrbracket= a\varphi(b) - \varphi(a)b.$$ Then $(\mathcal{A}, \cdot, \llbracket \cdot, \cdot\rrbracket)$ is a transposed $(\delta+1)$-Poisson algebra.
\end{corollary}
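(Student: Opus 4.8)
The plan is to realize Corollary \ref{tdpdd} as an immediate consequence of the two main theorems already established: Theorem \ref{th: d Nov d trPoison}, which passes from a $\delta$-Novikov--Poisson algebra to a transposed $(\delta+1)$-Poisson algebra via the commutator, and the earlier Example \ref{exnov} / Definition \ref{def: delta-nov-pois}, which produces a $\delta$-Novikov--Poisson structure on a commutative associative algebra $\mathcal{A}$ equipped with a $\delta$-derivation $\varphi$.

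First I would observe that, by the example following Definition \ref{def: delta-nov-pois}, the triple $(\mathcal{A}, \cdot, \circ)$ with $x\circ y = x\varphi(y)$ is a $\delta$-Novikov--Poisson algebra whenever $\varphi$ is a $\delta$-derivation of the commutative associative algebra $(\mathcal{A},\cdot)$. Next I would apply Theorem \ref{th: d Nov d trPoison} to this particular $\delta$-Novikov--Poisson algebra: it asserts that $(\mathcal{A}, \cdot, [\cdot,\cdot])$ is a transposed $(\delta+1)$-Poisson algebra, where the bracket is the commutator of $\circ$, namely $[a,b] = a\circ b - b\circ a$. The final step is simply to unwind this commutator using the explicit multiplication $a\circ b = a\varphi(b)$:
\[
[a,b] = a\circ b - b\circ a = a\varphi(b) - b\varphi(a) = a\varphi(b) - \varphi(a)b = \llbracket a,b\rrbracket,
\]
where the third equality uses commutativity of $\cdot$. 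Hence $\llbracket\cdot,\cdot\rrbracket$ coincides with the transposed-Poisson bracket furnished by Theorem \ref{th: d Nov d trPoison}, and $(\mathcal{A},\cdot,\llbracket\cdot,\cdot\rrbracket)$ is a transposed $(\delta+1)$-Poisson algebra.

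There is essentially no obstacle here, since all the analytic content lives in the two cited results; the only point requiring a word of care is the nondegeneracy hypothesis. The statement asks for $\varphi$ to be \emph{nontrivial} (nonzero), which guarantees that the resulting structure is a genuine (nonzero-bracket) transposed Poisson algebra rather than a degenerate one, matching the standing convention in the Notations that algebras with two multiplications are assumed to have both multiplications nonzero. I would therefore present this corollary as a two-line deduction, citing Example \ref{exnov} (together with the $\delta$-Novikov--Poisson example after Definition \ref{def: delta-nov-pois}) and Theorem \ref{th: d Nov d trPoison}, and noting the commutativity identification of the bracket.
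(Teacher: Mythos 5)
Your proposal is correct and matches the paper's own proof essentially verbatim: both define $a\circ b = a\varphi(b)$, invoke the $\delta$-Novikov--Poisson example to get $(\mathcal{A},\cdot,\circ)$, and then apply Theorem \ref{th: d Nov d trPoison}. Your explicit identification $a\circ b - b\circ a = a\varphi(b)-\varphi(a)b$ via commutativity of $\cdot$ is a small point the paper leaves implicit, but it is the same argument.
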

\begin{proof}
Define the operation $ a \circ b = a\varphi(b) $ for all $ a, b \in \mathcal{A} $. Since $ \varphi $ is a $ \delta $-derivation, $ (\mathcal{A}, \circ) $ is 
a $ \delta $-Novikov algebra and 
$ (\mathcal{A}, \cdot, \circ) $ is 
a $ \delta $-Novikov--Poisson algebra. From the previous theorem, we have $ (\mathcal{A}, \cdot, [\cdot, \cdot]) $ is a transposed $ (\delta + 1) $-Poisson algebra.
\end{proof}

It is known, that each Novikov algebra can be embedded in a suitable commutative associative algebra with derivation under the multiplication given by \eqref{mult nov in com} \cite{Bokut}. 
A similar problem has been appeared for transposed $\delta$-Poisson algebras.

\noindent {\bf Open question}.
Is a nontrivial  transposed $(\delta+1)$-Poisson algebra embedded in a suitable 
$\delta$-Novikov--Poisson algebra under the commutator product?

The proposition below gives a negative answer for $\delta=-2.$

\begin{proposition}
There are exceptional transposed anti-Poisson algebras.\end{proposition}
 \begin{proof}
 The classification of simple transposed anti-Poisson algebras was given \cite{dP}. In particular, it was shown that there are two simple transposed anti-Poisson algebras with simple Lie parts.
  Note that by Theorem \ref{th: free met Lie} and Theorem \ref{th: d Nov d trPoison}, we have that the Lie part of the special transposed anti-Poisson algebra must be a metabelian Lie algebra. Hence, the algebras from \cite{dP}  can not be embedded into a $(-2)$-Novikov--Poisson algebra.
    
 \end{proof}

 \begin{definition}
Let $ \delta $ be a fixed complex number. An algebra $ ({\rm G}, \circ, \{\cdot, \cdot\}) $ is called a $\delta $-Gelfand–Dorfman algebra \footnote{When $ \delta =1 $, this definition coincides with the standard notion of Gelfand–Dorfman algebras (see, for example, \cite{kms}).} if
 $ ({\rm G}, \circ) $ is a $\delta$-Novikov algebra,
  $ ({\rm G}, \{\cdot, \cdot\}) $ is a Lie algebra and 
the following identity holds:
    \begin{equation}\label{id: GD}
    \{a,b \circ c\}-\{c,b \circ a\}+\delta \{b,a\}\circ c -\delta \{b,c\}\circ a -b \circ\{a,c\}=0.
    \end{equation}
\end{definition}

If $ \delta =0 $, then the identity \eqref{id: GD} coincides with the identity \eqref{id: deltranspois} of transposed 1-Poisson algebras.

\begin{proposition}
Let $(N,\circ)$ be a $\delta$-Novikov algebra. Then define 
     $$ \{a,b \}= a\circ b - b\circ a.$$ Then $(N, \circ, \{ \cdot, \cdot\})$ is a $\delta $-Gelfand–Dorfman algebra.    
\end{proposition}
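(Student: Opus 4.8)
The plan is to verify the $\delta$-Gelfand--Dorfman identity \eqref{id: GD} by direct substitution of the definition $\{a,b\}=a\circ b-b\circ a$ into the left-hand side and then reduce everything to $\delta$-Novikov identities. First I would note that the Lie algebra condition on $(N,\{\cdot,\cdot\})$ is already guaranteed: anti-commutativity is immediate from the definition of the commutator, and the Jacobi identity follows from Lemma \ref{bdnov}, which established that $N$ is Lie-admissible (via identities \eqref{[xy]z} and \eqref{x[yz]}). So the only genuine work is checking \eqref{id: GD}.

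Next I would expand each of the five terms of \eqref{id: GD} using $\{a,b\}=a\circ b-b\circ a$. Writing everything out, $\{a,b\circ c\}=a\circ(b\circ c)-(b\circ c)\circ a$, $\{c,b\circ a\}=c\circ(b\circ a)-(b\circ a)\circ c$, $\delta\{b,a\}\circ c=\delta(b\circ a)\circ c-\delta(a\circ b)\circ c$, $\delta\{b,c\}\circ a=\delta(b\circ c)\circ a-\delta(c\circ b)\circ a$, and $b\circ\{a,c\}=b\circ(a\circ c)-b\circ(c\circ a)$. Collecting these with the correct signs produces a sum of twelve triple products in $\circ$. The strategy is then to group these twelve terms into pieces that are recognizable $\delta$-associators $(x,y,z)_\delta^\circ=\delta(x\circ y)\circ z-x\circ(y\circ z)$, and to exploit the two defining identities of a $\delta$-Novikov algebra: the $\delta$-left-symmetry \eqref{id: del-lsym}, which says $(x,y,z)_\delta^\circ=(y,x,z)_\delta^\circ$, and the right-commutativity \eqref{id: rcom}, $(xy)z=(xz)y$.

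The key step I expect is to pair off the terms so that applications of right-commutativity \eqref{id: rcom} rewrite products like $(b\circ a)\circ c$ into $(b\circ c)\circ a$ and $(c\circ b)\circ a$ into $(c\circ a)\circ b$, which should cause the $\delta$-weighted ``associated'' products to cancel in pairs, while the remaining left-associated terms of the form $x\circ(y\circ z)$ combine into two $\delta$-associators that are interchanged precisely by \eqref{id: del-lsym}. Concretely, I anticipate the left-hand side collapsing to something proportional to $\big[(a,b,c)_\delta^\circ-(b,a,c)_\delta^\circ\big]$-type expressions plus right-commutative rearrangements, all of which vanish. This mirrors the structure of the proof of Theorem \ref{th: d Nov d trPoison}, where the commutator of a $\delta$-Novikov--Poisson algebra was shown to satisfy a transposed Poisson identity, and indeed the present statement is the natural ``single-operation'' analogue of that result.

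The main obstacle will be bookkeeping: keeping track of the twelve signed triple products and correctly identifying which applications of \eqref{id: rcom} versus \eqref{id: del-lsym} are needed at each cancellation, since several terms look superficially alike but associate differently. I would organize the computation by first using \eqref{id: rcom} uniformly on all right-bracketed-outer products to normalize them, then reading off the surviving left-associated products and recognizing them as a symmetrized $\delta$-associator that is killed by $\delta$-left-symmetry. Provided the signs are handled carefully, no hypothesis beyond $(N,\circ)$ being a $\delta$-Novikov algebra is required, so the result holds for all $\delta$ without the restriction $\delta\neq 1$.
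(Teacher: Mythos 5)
Your proposal is correct and follows essentially the same route as the paper: the Lie part is quoted from Lemma \ref{bdnov}, and the identity \eqref{id: GD} is verified by expanding the commutators and regrouping the resulting products into one pair killed by right-commutativity \eqref{id: rcom} plus two $\delta$-associator differences killed by $\delta$-left-symmetry \eqref{id: del-lsym}, exactly as in the paper's one-line computation $(ba)c-(bc)a+(b,a,c)_{\delta}-(a,b,c)_{\delta}+(c,b,a)_{\delta}-(b,c,a)_{\delta}=0$. Only a trivial bookkeeping slip: the expansion yields ten signed triple products (five brackets, two terms each), not twelve.
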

\begin{proof}
     From Lemma \ref{bdnov}, we know that $ (N, [\cdot, \cdot ]) $ is a Lie algebra. It remains to verify the  identity \eqref{id: GD}. For all $a,b,c\in N,$ by \eqref{id: del-lsym} and \eqref{id: rcom}   we have

    $$\{a,b \circ c\}-\{c,b \circ a\}+\delta \{b,a\}\circ c -\delta \{b,c\}\circ a -b \circ\{a,c\}=$$
    $$(ba) c - (bc) a+(b,a,c)_{\delta}-(a,b,c)_{\delta}+(c,b,a)_{\delta}-(b,c,a)_{\delta}=0.$$
\end{proof}

In \cite{kms}, using computer algebra methods, it was shown that the variety of transposed Poisson algebras coincides with the variety of Gelfand--Dorfman algebras where the Novikov multiplication is commutative. We give a simple proof of this result and generalize it to the varieties of commutative $\delta$-Gelfand--Dorfman algebras and transposed $(\delta+1)$-Poisson algebras.

 Set the following polynomials 
 \begin{longtable}{lcl}
 $GD(x,y,z)$&$=$&$\{x,y z\}-\{z,y  x\}+\delta \{y,x\}z -\delta \{y,z\} x -y \{x,z\},$\\
$TP(x,y,z)$&$=$&$(\delta+1) x \{ y, z \} + \{ z, x y \} - \{ y, x z \}.$
\end{longtable}
    
The following statement shows that this is the case for $\delta$-Gelfand--Dorfman algebras and transposed $(\delta+1)$-Poisson algebras.

\begin{theorem}\label{GD}
   A commutative $\delta$-Gelfand--Dorfman algebra $({\rm G}, \cdot, \{ \cdot, \cdot\})$ is a transposed $(\delta+1)$-Poisson algebra if $\delta\neq \frac{1}{2}$. Conversely, a transposed $(\delta+1)$-Poisson algebra $({\rm G}, \cdot, \{ \cdot, \cdot\})$ is a commutative $\delta$-Gelfand--Dorfman algebra if $\delta\neq -1$.
\end{theorem}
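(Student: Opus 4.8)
The plan is to prove both implications by direct manipulation of the two defining polynomials $GD(x,y,z)$ and $TP(x,y,z)$, exploiting the fact that in a commutative algebra $(G,\cdot)$ the product $\{\cdot,\cdot\}$ remains anticommutative and the associator identities \eqref{id: del-lsym}, \eqref{id: rcom} are available on the $\delta$-Novikov side. Since both structures share the same commutative associative product and the same Lie bracket, the whole content of the theorem is that the two identities $GD=0$ and $TP=0$ are equivalent modulo the commutativity of the Novikov multiplication (with the indicated restrictions on $\delta$). So I would work entirely at the level of multilinear identities in the free commutative (associative) algebra equipped with an anticommutative bilinear bracket.

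\medskip

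First I would establish the forward direction. Assuming $GD(x,y,z)=0$ for all $x,y,z$ in a commutative $\delta$-Gelfand--Dorfman algebra, the goal is to derive $TP(x,y,z)=0$. The natural move is to symmetrize: since the Novikov product is now commutative, the term $y\{x,z\}$ in $GD$ equals $\{x,z\}y$, and several associator terms collapse. I would write down $GD(x,y,z)$ together with its images under suitable permutations of $x,y,z$ (in particular $GD$ with $x$ and $z$ interchanged, using anticommutativity of the bracket) and take an appropriate linear combination designed to cancel every term except those appearing in $TP$. The coefficient bookkeeping will produce an overall scalar factor, and I expect this factor to be a multiple of $(2\delta-1)$; this is exactly where the hypothesis $\delta\neq\frac{1}{2}$ enters, allowing division to recover $TP=0$.

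\medskip

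For the converse, assuming $TP(x,y,z)=0$ in a transposed $(\delta+1)$-Poisson algebra, I must show the Novikov product is commutative and that $GD=0$. Commutativity of ``$\cdot$'' is part of the hypothesis (it is the associative product), so ``commutative $\delta$-Gelfand--Dorfman'' really requires checking identity \eqref{id: GD} with the associative product playing the role of the Novikov product; I would verify that a commutative associative product trivially satisfies \eqref{id: rcom} and \eqref{id: del-lsym}, so that $(G,\cdot)$ is indeed a (commutative) $\delta$-Novikov algebra, and then derive \eqref{id: GD} from \eqref{id: deltranspois}. Again the strategy is to form a combination of permuted copies of $TP$ matching $GD$ term by term; here I anticipate the recovered scalar to be a multiple of $(\delta+1)$, pinning down the restriction $\delta\neq-1$.

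\medskip

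The main obstacle, and the step I would be most careful with, is the coefficient tracking in both linear combinations: the associator terms $(x,y,z)_\delta$ expand via \eqref{id: del-lsym} into sums that partly cancel only after commutativity is used, and a sign error or a miscounted $\delta$-weight would spoil the clean factorization into $(2\delta-1)$ and $(\delta+1)$. To keep this manageable I would first reduce every expression to a canonical form — writing all products with a fixed association and bracket-orientation using anticommutativity and commutativity — and only then match $GD$ against the permuted $TP$ combination. Once the two exceptional values of $\delta$ emerge as the vanishing loci of the relevant scalar prefactors, the equivalence follows immediately in each direction.
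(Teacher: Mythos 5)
Your strategy coincides with the paper's own proof: the paper establishes exactly the linear-combination identities you anticipate, namely
\[
TP(a,b,c)=\tfrac{1}{2\delta -1}\big(\delta\, GD(c,b,a)-\delta\, GD(b,c,a)-(1-\delta)\, GD(c,a,b)\big),
\qquad
GD(a,b,c)=\tfrac{1}{\delta +1}\big(\delta\, TP(a,c,b)+\delta\, TP(c,b,a)+TP(b,c,a)\big),
\]
so the scalar factors $(2\delta-1)$ and $(\delta+1)$ arise precisely in the directions you predicted. Carrying out the coefficient bookkeeping you outline produces these two combinations, and the rest of your plan (same commutative associative product, same Lie bracket, equivalence of the compatibility identities) is exactly how the paper argues.
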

\begin{proof} Since, $({\rm G},\cdot)$ is commutative associative algebra and $({\rm G},\{\cdot,\cdot\})$ is a Lie algebra. It remains to show that compatibility conditions \eqref{id: deltranspois} and \eqref{id: GD} are equivalent. Therefore, we have

 \begin{longtable}{lcl}
 $TP(a,b,c)$&$=$&$\frac{1}{2\delta -1}\big(\delta  GD(c,b,a)-\delta  GD(b,c,a)-(1-\delta ) GD(c,a,b) \big),$\\
$GD(a,b,c)$&$=$&$\frac{1}{\delta +1} \big(\delta  TP(a,c,b)+\delta TP(c,b,a)+TP(b,c,a)\big).$
\end{longtable}

 This completes the proof.
\end{proof}

\begin{remark}
    The restrictions on $\delta$ given in Theorem \ref{GD} are important due to the following examples:
    \begin{enumerate}
        \item 
    The following algebra gives an example of a transposed $0$-Poisson, but not 
    $(-1)$-Gelfand--Dorfman algebra:

    \begin{longtable}{llllllllllllll}

 ${\mathfrak A}_1$ &$:$&   $e_1\cdot e_1 = e_1$ & $\{ e_2,e_3\}=e_1$   
    \end{longtable}

        \item 
    The following algebra gives an example of a     $\frac{1}{2}$-Gelfand--Dorfman algebra, but not transposed $\frac{3}{2}$-Poisson: 

    \begin{longtable}{llllllllllllll}

 ${\mathfrak A}_2$ &$:$&   
 $e_1\cdot e_1 = e_4$ &  $e_2\cdot e_2 = e_4$ &  $e_3\cdot e_3 = e_4$ \\&& 
 $\{ e_1,e_2\}=-e_3$ & $\{e_1,e_3\}=e_2$ & $\{e_2,e_3\}=-e_1$
    \end{longtable}

    \end{enumerate}

\end{remark}

\section{Operad of the variety of $\delta$-Novikov algebras}\label{oper}

In this section, we prove that the operad associated with the variety of $\delta$-Novikov algebras is not Koszul. Let $\delta$-$N$ denote the operad governing the variety of $\delta$-Novikov algebras. Note that, when $\delta = 1$, the operad $1$-$N$ coincides with the Novikov operad, and when $\delta = 0$, the operad $0$-$N$ coincides with the bicommutative operad. It has been proven that both the Novikov and bicommutative operads are not Koszul \cite{DzhNov, DzhuBicom}. We extend these results by proving that the operad $\delta$-$N$ is not Koszul for every value of $\delta$.

\begin{theorem}\label{thoper} Let $\delta$-$N$ be the operad governed by the variety of $\delta$-Novikov algebras. Then,  the operad $\delta$-$N$ is not Koszul.
\end{theorem}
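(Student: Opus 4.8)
The plan is to prove non-Koszulness of the operad $\delta$-$N$ by computing a numerical invariant of the operad and showing it is inconsistent with Koszulness. The standard and most robust tool here is the \emph{Poincaré series} (generating function of dimensions of the operad components). Recall that if a quadratic operad $\mathcal{P}$ is Koszul, then its Poincaré series $f_{\mathcal{P}}(t)$ and that of its Koszul dual $f_{\mathcal{P}^!}(t)$ must satisfy the functional equation $f_{\mathcal{P}}(-f_{\mathcal{P}^!}(-t)) = t$. By the first assertion that is available to me (the Koszul dual of the left $\delta$-Novikov operad is the right $\delta$-Novikov operad), I may use that $\delta$-$N^! = \delta$-$N^{\mathrm{op}}$, and since the right $\delta$-Novikov operad is isomorphic to the left one via the opposite multiplication $(x,y)\mapsto yx$ (as noted after Example~\ref{exnov}), the operad is \emph{self-dual} up to this isomorphism, so $f_{\mathcal{P}^!}(t) = f_{\mathcal{P}}(t)$. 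The Koszulness test then reduces to checking whether $f_{\mathcal{P}}(-f_{\mathcal{P}}(-t)) = t$.

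First I would establish the dimensions $\dim \delta\text{-}N(n)$ of the multilinear components in low arities. In arity $2$ the space is $2$-dimensional (spanned by $xy$ and $yx$). In arity $3$, I would count the dimension of the span of all multilinear monomials modulo the relations \eqref{id: del-lsym} and \eqref{id: rcom}: starting from the $12$ multilinear associative-word monomials (two bracketings times six permutations), the right-commutativity relation \eqref{id: rcom} identifies right-nested products under transposition of the last two arguments, and the $\delta$-left-symmetry relation \eqref{id: del-lsym} imposes further linear relations. I would carry this count out explicitly to get $\dim\delta\text{-}N(3)$, and push to arity $4$ as well, since a single contradiction of the functional equation at any coefficient suffices. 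The arity-$4$ computation is where the real work lies, as the number of monomials and relations grows and one must be careful that the relations one derives (such as the consequences \eqref{id: strong rcom} and \eqref{id: strong rcom2} established earlier, valid for $\delta\neq1$) are correctly incorporated.

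The cleanest route, and the one I would actually pursue, is to exhibit a \emph{specific} obstruction rather than matching full series: compute the relevant Koszulness criterion coefficient by coefficient and show the equation $f_{\mathcal{P}}(-f_{\mathcal{P}}(-t))=t$ fails at the $t^4$ coefficient. Writing $f_{\mathcal{P}}(t) = \sum_{n\geq1} \frac{\dim\delta\text{-}N(n)}{n!}\,t^n$, the arity-$2$ and arity-$3$ dimensions are forced, and the functional equation then \emph{predicts} a value for $\dim\delta\text{-}N(4)$ under the Koszulness hypothesis. I would compare this predicted value against the directly computed $\dim\delta\text{-}N(4)$ and show they disagree for every $\delta$. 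The fact that the cases $\delta=1$ (Novikov) and $\delta=0$ (bicommutative) are already known to be non-Koszul \cite{DzhNov, DzhuBicom} serves as a valuable sanity check: my dimension formulas must reproduce the known failure at those specializations.

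The main obstacle I anticipate is the arity-$4$ dimension count, and in particular handling its dependence on $\delta$ uniformly. The relations \eqref{id: del-lsym}, \eqref{id: rcom} have coefficients depending on $\delta$, and the rank of the relation matrix may jump at exceptional values (one already sees $\delta=1$ and $\delta=\tfrac12$ playing special roles elsewhere in the paper). I would therefore treat $\dim\delta\text{-}N(4)$ as the generic rank over the function field $\mathbb{C}(\delta)$, verify that the Koszulness equation fails generically, and then separately confirm the finitely many exceptional $\delta$-values (certainly including $\delta=0,1$ via the cited results, and checking $\delta=\tfrac12$ and any other degeneracy directly). Because a single failing Gröbner-basis-style S-polynomial or a single mismatched Poincaré coefficient already defeats Koszulness, I expect that once the arity-$4$ dimension is pinned down the contradiction is immediate and uniform in $\delta$.
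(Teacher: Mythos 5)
Your overall strategy (the Ginzburg--Kapranov functional equation for the generating series, combined with self-duality of the $\delta$-Novikov operad under the opposite multiplication) is the same as the paper's, but your concrete plan breaks at its central step: the obstruction you propose to exhibit at the $t^4$ coefficient does not exist. Write $H(x)=-x+x^2-x^3+\frac{\beta}{24}x^4-\frac{\alpha}{120}x^5+O(x^6)$ for the signed generating series, where $\beta=\dim \delta$-$N(4)$ and $\alpha=\dim \delta$-$N(5)$; since the Koszul dual is the right $\delta$-Novikov operad, whose multilinear components have the same dimensions as those of the left one, $H^!=H$. A direct expansion gives
\[
H\big(H^!(x)\big)\;=\;x+\frac{240-15\beta+\alpha}{60}\,x^5+O(x^6),
\]
so the coefficient of $x^4$ in the composition vanishes \emph{identically}, for every value of $\beta$. (In general that coefficient equals $\tfrac{1}{24}\big(\dim P(4)-\dim P^!(4)\big)$, which is zero here precisely because of the self-duality you invoke.) Hence the functional equation does not ``predict'' $\dim \delta$-$N(4)$ at all, and no comparison at arity $4$ can contradict Koszulness. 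Your own sanity check would have exposed this: the Novikov and bicommutative operads also pass the $t^4$ test despite being non-Koszul.

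The first potentially nonzero coefficient is at $t^5$, and that is where the paper works. Its key idea, absent from your proposal, is that one need not compute $\beta$ or $\alpha$ exactly: Koszulness would force $\alpha=15\beta-240$, while the identities \eqref{id: del-lsym}, \eqref{id: rcom} and the derived identity \eqref{id: strong rcom} give the upper bound $\beta\le 14$, whence $\alpha\le-30<0$, contradicting the fact that $\alpha$ is a dimension. Repairing your argument therefore requires either this degree-$5$ positivity trick or exact computations of both the arity-$4$ and arity-$5$ dimensions uniformly in $\delta$ --- substantially more than the single arity-$4$ count you budgeted for. Two secondary points: the duality statement ($\delta$-$N^!$ is the right $\delta$-Novikov operad) is not part of the statement you were asked to prove, so you must establish it (the paper does so inside the proof via the Lie-admissibility computation on $N\otimes N^!$) rather than cite it; and your remark that ``a single failing S-polynomial already defeats Koszulness'' is backwards, since a quadratic Gr\"obner basis is sufficient but not necessary for Koszulness, so its failure proves nothing.
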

\begin{proof}
Let us construct multilinear base elements of degree $3$ for free $\delta$-Novikov algebra. Below, we give a presentation of 6 non-base elements of degree 3 as a linear
combination of the base elements of degree 3:

\begin{longtable}{rcl}
$(a\circ c)\circ b \ =\ (a\circ b)\circ c,$& $\quad$&$  
(b\circ c)\circ a \ =\ (b\circ a)\circ c,\quad   \quad \quad 
(c\circ b)\circ a \ =\ (c\circ a)\circ b,$\\
$b\circ (a\circ c) $&$= $&$\delta (b\circ a) \circ c-\delta (a\circ b) \circ c+a\circ (b\circ c),$\\
$c\circ (a\circ b)$&$=$&$\delta (c\circ a) \circ b-\delta (a\circ c) \circ b+a\circ (c\circ b),$\\
$c\circ (b\circ a)$&$=$&$\delta (c\circ b) \circ c-\delta (b\circ c) \circ a+b\circ (c\circ a) .$
\end{longtable}

Following the approach in \cite{GK94}, we compute the dual operad $\delta$-$N^!$, where the operad $\delta$-$N$ governs the variety of $\delta$-Novikov algebras. Then,
 
\begin{longtable}{rcl}
$[[a \otimes x, b \otimes y], c \otimes z] $&$+$&$ [[b \otimes y, c \otimes z], a \otimes x] \ +\  [[c \otimes z, a \otimes x], b \otimes y]$ \\
&$=$&  $((a \circ b) \circ c) \otimes ((x \bullet y) \bullet z) - (c \circ (a \circ b)) \otimes (z \bullet (x \bullet y))$ \\
&&$\quad - ((b \circ a) \circ c) \otimes ((y \bullet x) \bullet z) + (c \circ (b \circ a)) \otimes (z \bullet (y \bullet x))$ \\
&&$\quad + ((b \circ c) \circ a) \otimes ((y \bullet z) \bullet x) - (a \circ (b \circ c)) \otimes (x \bullet (y \bullet z))$ \\
&&$\quad - ((c \circ b) \circ a) \otimes ((z \bullet y) \bullet x) + (a \circ (c \circ b)) \otimes (x \bullet (z \bullet y))$ \\
&&$\quad + ((c \circ a) \circ b) \otimes ((z \bullet x) \bullet y) - (b \circ (c \circ a)) \otimes (y \bullet (z \bullet x))$ \\
&&$\quad - ((a \circ c) \circ b) \otimes ((x \bullet z) \bullet y) + (b \circ (a \circ c)) \otimes (y \bullet (x \bullet z))\ =$\\

\multicolumn{3}{l}{$\mbox{
\big(
by using the above presentation of the 6 non-basis elements of degree 3}$}\\ 
\multicolumn{3}{r}{$\mbox{
as a linear combination of basis elements, we have \big)}$}\\

&$=$& $((a \circ b) \circ c) \otimes ((x \bullet y) \bullet z-\delta y \bullet (x \bullet z)-(x \bullet z) \bullet y + \delta z \bullet (x \bullet y))$\\
&&$\quad+((b \circ a) \circ c) \otimes (-(y \bullet x) \bullet z +\delta y \bullet (x \bullet z)+(y \bullet z) \bullet x) - \delta z \bullet (y \bullet x))$\\
&&$\quad+ ((c \circ a) \circ b) \otimes ((z \bullet y) \bullet x -\delta z \bullet (y \bullet x)+(z \bullet x) \bullet y)-\delta z \bullet (x \bullet y))$\\
&&$\quad+ (a \circ (b \circ c)) \otimes(-x \bullet (y \bullet z) + y \bullet (x \bullet z)) $        \\
&&$\quad+ (a \circ (c \circ b)) \otimes (x \bullet (z \bullet y)-z \bullet (x \bullet y))$ \\
&&$\quad+ (b \circ (c \circ a)) \otimes (-y \bullet (z \bullet x)+z \bullet (y \bullet x)).$
\end{longtable}
Therefore, the Lie-admissibility condition gives us defining identities for the dual  operad $\delta$-$N^!$, which is equivalent to the following:
\begin{longtable}{rcl}
$(x \bullet y) \bullet z-\delta x \bullet (y \bullet z)$&$=$&$
(x \bullet z) \bullet y - \delta x \bullet (z \bullet y),$\\
$x \bullet (y \bullet z) $&$=$&$ y \bullet (x \bullet z).$
\end{longtable}
Thus, the Koszul dual operad $(N^!, \bullet)$ is a right $\delta$-Novikov algebra if and only if the original operad $(N, \circ)$ is a left $\delta$-Novikov algebra. Conversely, $(N, \circ)$ is a left $\delta$-Novikov algebra precisely when $(N^!, \bullet)$ is a right $\delta$-Novikov algebra.

Moreover, the dimensions of the multilinear components of the free left $\delta$-Novikov and free right $\delta$-Novikov algebras are equal. Specifically, for degrees $1$, $2$ and $3$ the dimensions of the multilinear parts are $1$, $2$, and $6$, respectively, for every $\delta \neq 1$. Let us denote by $\beta = \beta(\delta)$ and $\alpha = \alpha(\delta)$ the dimension of the multilinear part in degrees $4$ and $5$, respectively.
Therefore, the generating functions (Hilbert series) of the $\delta$-Novikov operad and its Koszul dual are:
\[
H(x) = H^!(x) = -x + x^2 - x^3 + \frac{\beta x^4}{24} - \frac{\alpha x^5}{120} + O(x^6).
\]
We observe that the composition of the Hilbert series satisfies:
\[
H\big(H^!(x)\big) = x + \frac{240-15\beta + \alpha}{60} x^5 + O(x^6).
\]

On the other hand, using the identities $ \eqref{id: del-lsym} $, $ \eqref{id: rcom} $, and $ \eqref{id: strong rcom} $, we observe that the dimension $\beta $ of the multilinear part in degree $4$ has an upper bound of $\beta\leq14.$ Additionally, from the equation $240 - 15 \beta + \alpha = 0$, we see that if $\beta < 16$, then $\alpha$ must be negative. However, since $\alpha$ is positive, it follows that the coefficient of $x^5$ in $H\big(H^!(x)\big)$ is non-zero. Consequently, $H\big(H^!(x)\big) \neq x$.
According to the results of Ginzburg and Kapranov in \cite{GK94}, we have every $\delta$-Novikov operad is not Koszul. 
Therefore, the statement of the theorem is proved.
\end{proof}

\section{Remark on solvability and nilpotency of $\delta$-Novikov algebras}

In this section, we summarize some common properties of Novikov algebras and $\delta$-Novikov algebras. Most of lemmas and theorems presented in \cite[Section 3]{Sh-Z20} for Novikov algebras hold analogously for $\delta$-Novikov algebras due to the structural similarities and shared algebraic properties. Thus, the results for solvability and nilpotency apply without modification in the $\delta$-Novikov case. Therefore, we present them without proof.


The following theorem states that idempotent nonzero minimal ideals are simple, which is analogous to the case for Novikov algebras \cite{Sh-Z20}.

\begin{theorem}
    If $I$ is a minimal ideal of a  $\delta$-Novikov algebra $N,$ then either $I^2 = (0)$ or $I$ is a simple algebra.
\end{theorem}
\begin{proof}
    The proof is the same as in \cite{Sh-Z20}.
\end{proof}

Define $N^{(0)} = N$ and define $N^{(n)} = N^{(n-1)}N^{(n-1)}$ for every integer $n \geq 1$. Then $N$ is called \textit{solvable} of index $n$ if $n$ is the minimal integer such that $N^{(n)} = 0$. Define $N_L^1 = N$ and $N_L^n = N_L^{n-1}N$. Then $N$ is called \textit{right nilpotent} of index $n$ if $n$ is the minimal integer such that $N_L^n = 0$. \textit{Left nilpotency} is defined similarly. Finally, we define $N^1 = N$ and 
\[
N^n = \sum_{1 \leq i \leq n-1} N^i N^{n-i}, \quad n \geq 2.
\]
Then $N$ is called \textit{nilpotent} if $N^n = 0$ for some positive integer $n$.

\begin{lemma}\label{lem: 3.1}
If $N$ is right nilpotent, then $N^2$ is nilpotent, in particular, $N$ is solvable.
\end{lemma}

\begin{proof}
 The proof is the same as in \cite{Sh-Z20}.
\end{proof}

\begin{lemma}\label{lem: 3.2}
For all $m \geq 0$ and $n \geq 1$, we have $(N^{(m)})_L^{3^n} \subseteq N^{(m+n)}$.
\end{lemma}

\begin{proof}
 The proof is the same as in \cite{Sh-Z20}.
\end{proof}

\begin{theorem}\label{thens}
Let $N$ be a left $\delta$-Novikov algebra. Then the following are equivalent:
\begin{itemize}
    \item[(i)] $N$ is right nilpotent.
    \item[(ii)] $N^2$ is nilpotent.
    \item[(iii)] $N$ is solvable.
\end{itemize}
\end{theorem}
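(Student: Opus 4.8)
The plan is to establish the equivalence by proving the cycle of implications $(i)\Rightarrow(ii)\Rightarrow(iii)\Rightarrow(i)$, relying entirely on Lemma \ref{lem: 3.1} and Lemma \ref{lem: 3.2}, so that no fresh computation with the defining identities is needed.

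The implication $(i)\Rightarrow(ii)$ is immediate, since Lemma \ref{lem: 3.1} asserts precisely that right nilpotency of $N$ forces $N^2$ to be nilpotent. For $(ii)\Rightarrow(iii)$ I would first record the elementary identity $N^{(k+1)}=(N^2)^{(k)}$ for all $k\geq 0$, obtained by induction from the definition $N^{(n)}=N^{(n-1)}N^{(n-1)}$ together with the base case $N^{(1)}=N^2$. This says the derived series of $N$ is, up to an index shift, the derived series of $M:=N^2$. Since any nilpotent (nonassociative) algebra is solvable — one checks $M^{(k)}\subseteq M^{2^k}$ by induction using $M^iM^j\subseteq M^{i+j}$ — the nilpotency of $N^2$ gives $M^{(k)}=0$ for some $k$, whence $N^{(k+1)}=M^{(k)}=0$ and $N$ is solvable.

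The heart of the argument is $(iii)\Rightarrow(i)$, which is where Lemma \ref{lem: 3.2} does the real work. Suppose $N$ is solvable, say $N^{(n)}=0$. Taking $m=0$ in Lemma \ref{lem: 3.2} yields $N_L^{3^n}=(N^{(0)})_L^{3^n}\subseteq N^{(n)}=0$, so $N$ is right nilpotent of index at most $3^n$. This closes the cycle and establishes that $(i)$, $(ii)$, and $(iii)$ are equivalent.

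I expect the only genuine subtlety to lie in the bookkeeping between the two chains of ideals — matching the derived series $N^{(n)}$ against the right-lower series $N_L^n$ — which is exactly what Lemma \ref{lem: 3.2} packages into the single containment $(N^{(m)})_L^{3^n}\subseteq N^{(m+n)}$. Once that lemma is taken as given, every individual step above is short, and the proof reduces to assembling them in the right order.
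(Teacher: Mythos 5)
Your proposal is correct and follows essentially the same route as the paper: $(i)\Rightarrow(ii)\Rightarrow(iii)$ via Lemma \ref{lem: 3.1}, and $(iii)\Rightarrow(i)$ by applying Lemma \ref{lem: 3.2} with $m=0$ to get $N_L^{3^n}\subseteq N^{(n)}=0$. Your only addition is to spell out $(ii)\Rightarrow(iii)$ explicitly via $N^{(k+1)}=(N^2)^{(k)}$ and $M^{(k)}\subseteq M^{2^k}$, a detail the paper leaves implicit in the ``in particular'' clause of Lemma \ref{lem: 3.1}.
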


\begin{proof}
By Lemma \ref{lem: 3.1}, we have (i) $\Rightarrow$ (ii) $\Rightarrow$ (iii). To show (iii) $\Rightarrow$ (i), assume $N^{(n)} = 0$ for a positive integer $n$. By Lemma \ref{lem: 3.2}, we have $(N)_L^{3^n} = (N^{(0)})_L^{3^n} \subseteq N^{(0+n)} = 0$.
\end{proof}


\medskip 




 \newpage

\end{document}